\newcommand{\GL}{\operatorname{GL}}
\newcommand{\Sp}{\operatorname{Sp}}
\newcommand{\Speh}{\mathfrak{S}}
\newcommand{\GSp}{\operatorname{GSp}}
\newcommand{\bs}{\backslash}
\newcommand{\Irr}{\operatorname{Irr}}
\newcommand{\disc}{\operatorname{disc}}
\newcommand{\pl}{\operatorname{pl}}
\newcommand{\Bes}{\mathcal{B}}
\newcommand{\OOO}{\mathcal{O}}
\newcommand{\basis}{\mathfrak{B}}
\newcommand{\ind}{\operatorname{ind}}
\newcommand{\Ind}{\operatorname{Ind}}
\newcommand{\embd}{\iota}
\newcommand{\K}{\mathbf{K}}
\newcommand{\tr}{\operatorname{tr}}
\newcommand{\Z}{\mathbb{Z}}
\newcommand{\smth}{\operatorname{sm}}
\newcommand{\mira}{Q}
\newcommand{\rest}{\big |}
\newcommand{\dirac}{\pmb{\delta}}
\newcommand{\modulus}{\delta}
\newcommand{\trns}{\mathbf{T}}
\newcommand{\temp}{\operatorname{temp}}
\newcommand{\Mat}{\operatorname{Mat}}
\newcommand{\abs}[1]{\left|{#1}\right|}
\newcommand{\Conv}{\mathfrak{C}}
\newcommand{\Nu}{\mathcal{V}}
\newcommand{\swrz}{\mathcal{S}}
\newcommand{\bil}{\mathbf{B}}
\newcommand{\C}{\mathbb{C}}
\newcommand{\data}{\mathcal{D}}
\newcommand{\crnr}{\iota_1}
\newcommand{\id}{\operatorname{Id}}
\newcommand{\End}{\operatorname{End}}
\newcommand{\Hom}{\operatorname{Hom}}
\newcommand{\model}{\mathfrak{M}}
\newcommand{\trans}{\mathcal{T}}
\newcommand{\transs}{\trans^*}
\newcommand{\triv}{{\bf{1}}}
\newcommand{\sm}[4]{\left(\begin{smallmatrix}{#1}&{#2}\\{#3}&{#4}\end{smallmatrix}\right)}
\newcommand{\proj}{\mathfrak{p}}
\newcommand{\der}{{\operatorname{der}}}
\newcommand{\reg}{{\operatorname{reg}}}
\newcommand{\M}{\mathcal{M}}
\newcommand{\Id}{{\operatorname{Id}}}
\newcommand{\B}{}
\newcommand{\reltempw}{\mathcal{C}^w}
\newcommand{\reltemp}{\mathcal{C}_0^w}
\newtheorem*{theorem}{Theorem}
\newtheorem*{lemma}{Lemma}
\newtheorem*{proposition}{Proposition}
\newtheorem*{corollary}{Corollary}
\newtheorem*{example}{Example}
\newtheorem*{definition}{Definition}
\newtheorem*{remark}{Remark}
\numberwithin{equation}{section}
\begin{document}

\title{Explicit Plancherel formula for the space of symplectic forms}
\author{Erez Lapid}
\address{Department of Mathematics, Weizmann Institute of Science, Rehovot 7610001, Israel}
\email{erez.m.lapid@gmail.com}
\author{Omer Offen}
\address{Department of Mathematics, Brandeis University, 415 South Street, Waltham, MA 02453, USA}
\email{offen@brandeis.edu}

\date{\today}

\begin{abstract}
We provide a Plancherel decomposition for the space of symplectic bilinear forms of rank $2n$
over a local non-archimedean field $F$ in terms of that of $\GL_n(F)$.
\end{abstract}

\maketitle

\setcounter{tocdepth}{1}
\tableofcontents

\section{Introduction}

\subsection{}
The purpose of this paper is to give an explicit Plancherel formula for
the space $Y_n$ of symplectic (i.e., non-degenerate alternating) bilinear forms on a $2n$-dimensional
vector space over a local field $F$, in terms of the Plancherel formula for $G'=\GL_n(F)$.
For simplicity we will assume that $F$ is non-archimedean (of any characteristic)
although the statement and the idea of the proof should hold for the archimedean case as well.

The Plancherel decomposition for general real reductive symmetric spaces was worked out some time ago by many mathematicians
and is one of the highlights of harmonic analysis in the post Harish-Chandra era.
(See \cite{MR1957064} and the references therein.)
In the non-archimedean case, one cannot expect a completely explicit Plancherel decomposition,
as even in the group case, there is no explicit description of the discrete series
(or their characters for that matter).\footnote{By the local Langlands conjecture,
which is a theorem in many cases, discrete series correspond to certain representations
of the Weil group of $F$ with some additional data.}
Several years ago, Sakellaridis--Venkatesh made remarkable conjectures on the $L^2$-decomposition of $p$-adic symmetric spaces
(and in fact, more generally, of spherical varieties) \cite{MR3764130}.
In particular, the support of the Plancherel measure is expected to be the image of functoriality
from a certain group prescribed by the spherical variety.
Sakellaridis--Venkatesh also expressed, at least under certain assumptions, the continuous part of the spectrum in terms of
the discrete spectrum of smaller spherical varieties.
(For symmetric spaces this work was completed by Delorme \cite{MR3770165}.)

A different approach to study the spectrum of symmetric spaces was recently taken by Beuzart-Plessis \cite{1812.00047}.
We will follow the latter.

An explicit spherical Plancherel formula for $Y_n/\GL_{2n}(\OOO)$, where $\OOO$ is the ring of integers of $F$, was obtained in \cite{MR944325}.
\subsection{}
In order to state our main result we first set some notation.
Fix a local non-archimedean field $F$ with normalized absolute value $\abs{\cdot}$ and ring of integers $\OOO$.

If $X$ is an $\ell$-space \cite{MR0425030}, we denote by $C^\infty(X)$ the space of locally constant, complex-valued functions on $X$
and by $\swrz(X)$ the subspace of compactly supported functions in $C^\infty(X)$.
If $G$ is an $\ell$-group, $H$ is a closed subgroup of $G$ and $dh$ is a Haar measure on $H$, then
the projection $f\mapsto\int_Hf(hg)\ dh$ identifies $\swrz(H\bs G)$ with the space of left $H$-coinvariants of $\swrz(G)$.

From now on, fix an integer $n\ge1$ and let $G=\GL_{2n}(F)$, acting on the right on the vector space $F^{2n}$ of row vectors
of size $2n$, with its standard basis $e_1,\dots,e_{2n}$. Let $H\subset G$ be the symplectic group
\[
H=\Sp_n(F)=\{g\in G:\,^tgJ_ng=J_n\}
\]
with respect to $J_n=\sm{}{w_n}{-w_n}{}$ where $w_n=\left(\begin{smallmatrix}&&1\\&\iddots\\1&&\end{smallmatrix}\right)\in G'$.
Thus, $Y_n\simeq H\bs G$.

For any tempered, irreducible representation $\pi$ of $G'$
let $\sigma=\Speh(\pi)$ be the corresponding Speh representation of $G$.
(All representations are over the complex numbers.)
More precisely, let $P=M\ltimes U$ be the standard parabolic subgroup of $G$ of type $(n,n)$ with its standard
Levi decomposition, i.e. $M=\{\sm{g_1}{}{}{g_2}:g_1,g_2\in G'\}$ and $U=\{\sm{I_n}{X}{}{I_n}:X\in\Mat_{n\times n}(F)\}$.
Let $\varpi$ be the fundamental weight of $P$, i.e., the character of $M$ given by
\[
\varpi(\sm{g_1}{}{}{g_2})=\abs{\frac{\det g_1}{\det g_2}}^{\frac12}, \ g_1,g_2\in G'.
\]
Then, by definition, $\Speh(\pi)$ is the Langlands quotient (i.e., the unique irreducible quotient)
of the induced representation $I_P(\pi\otimes\pi,\varpi)$,
the normalized parabolic induction with respect to $P$ of the irreducible representation
$(\pi\otimes\pi)\cdot\varpi=\pi\cdot\abs{\det}^{\frac12}\otimes\pi\cdot\abs{\det}^{-\frac12}$ of $M$.
Alternatively, $\Speh(\pi)$ is also the unique irreducible subrepresentation of $I_P(\pi\otimes\pi,\varpi^{-1})$,
similarly defined. The representation $\Speh(\pi)$ is unitarizable.

We denote by $\Irr G$ the set of irreducible representations of $G$ and by $\Irr_{\temp}G'$ (resp., $\Irr_{\disc}G'$) the set of irreducible tempered (resp., discrete series)
representations of $G'$, up to equivalence.
Fix a Haar measure $dg$ for $G'$.
Let $\mu_{\pl}$ be the Plancherel measure on $\Irr_{\temp}G'$ \cite{MR1989693},
characterized by the relation
\[
f(e)=\int_{\Irr_{\temp}G'}\tr\pi(f)\ d\mu_{\pl}(\pi),\ \ f\in\swrz(G').
\]

Let $\pi\in\Irr_{\temp}G'$ and $\sigma=\Speh(\pi)\in\Irr G$.
We will define below a realization $\model_{\psi_N}(\sigma)$ of $\sigma$
with an explicit invariant inner product and a non-trivial $H$-invariant linear form $\ell_H$.
We note that the space of $H$-invariant linear forms on $\sigma$ is one-dimensional.

For any $f_1,f_2\in\swrz(G)$ let
\[
(f_1,f_2)_\sigma=\sum_v\ell_H(\sigma(f_1)v)\overline{\ell_H(\sigma(f_2)v)}
\]
where $v$ ranges over a suitable orthonormal basis of $\model_{\psi_N}(\sigma)$.
Since $\ell_H$ is $H$-invariant, the positive semi-definite hermitian form $(f_1,f_2)_\sigma$ factors through
the canonical map $\swrz(G)\rightarrow\swrz(H\bs G)$.
We continue to denote the resulting form on $\swrz(H\bs G)$ by $(\cdot,\cdot)_\sigma$.

Our main result is the following.

\begin{theorem} \label{thm: main}
For a suitable choice of Haar measures and for any $\varphi_1,\varphi_2\in\swrz(H\bs G)$ we have
\begin{equation} \label{eq: inner}
(\varphi_1,\varphi_2)_{L^2(H\bs G)}=\int_{\Irr_{\temp}(G')}(\varphi_1,\varphi_2)_{\Speh(\pi)}\ d\mu_{\pl}(\pi)
\end{equation}
where the right-hand side is an absolutely convergent integral.
\end{theorem}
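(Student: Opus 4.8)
The plan is to follow the strategy of Beuzart-Plessis \cite{1812.00047}, reducing \eqref{eq: inner} to a pointwise spectral identity and then to a contour-shift and residue computation. By polarization, and since both sides of \eqref{eq: inner} are continuous hermitian forms on $\swrz(H\bs G)$ that factor through $\swrz(G)\to\swrz(H\bs G)$, it suffices to prove the identity for $\varphi_1=\varphi_2=\varphi$ and to establish absolute convergence of the right-hand side. Lift $\varphi$ to $f\in\swrz(G)$, so $\varphi(g)=\int_H f(hg)\,dh$. Unfolding the definition and substituting, $\|\varphi\|^2_{L^2(H\bs G)}=\int_G f(g)\overline{\varphi(g)}\,dg=\int_H\langle\lambda(h)f,f\rangle_{L^2(G)}\,dh$, where $\lambda$ denotes left translation; thus the left-hand side of \eqref{eq: inner} is the $H$-period of a matrix coefficient of the regular representation of $G$.

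Next I would insert the Harish-Chandra--Plancherel formula for $G$ into the inner product $\langle\lambda(h)f,f\rangle_{L^2(G)}$, writing it as an integral over tempered $\Pi=I_{\mira}(\tau,\nu)$ of $\tr\bigl(\Pi(h)\Pi(f)\Pi(f)^*\bigr)$ against the Plancherel density of $G$, and then attempt to integrate over $h\in H$. This interchange is not licit: irreducible tempered representations of $\GL_{2n}$ are generic and hence not $\Sp_n$-distinguished, the naive $H$-integral of a tempered matrix coefficient diverges, and one must instead regularize using $\Sp_n$-intertwining periods (Jacquet integrals for the symmetric space $\Sp_n\bs\GL_{2n}$), whose existence and meromorphic continuation in $\nu$ I would take from the literature on periods over symmetric subgroups of $\GL$. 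Shifting the contour of the $\nu$-integral off the unitary axis, the continuous contribution drops out and one collects the residues at the poles of the intertwining periods; these poles occur exactly at the parameters for which the standard module $I_P(\pi\otimes\pi,\varpi)$ has the distinguished Langlands quotient $\Speh(\pi)$, with $\pi\in\Irr_{\temp}G'$. The residues are identified—using the explicit model $\model_{\psi_N}(\Speh(\pi))$ together with its invariant inner product and the normalization of $\ell_H$—with the relative characters $(\cdot,\cdot)_{\Speh(\pi)}$, and the reorganization of the discrete-series data of the Levi subgroups of $G$ into $\Irr_{\temp}G'$ converts the residual Plancherel density into $d\mu_{\pl}(\pi)$ on $\Irr_{\temp}G'$; this produces \eqref{eq: inner}.

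To close the argument I would establish absolute convergence of $\int_{\Irr_{\temp}G'}(\varphi,\varphi)_{\Speh(\pi)}\,d\mu_{\pl}(\pi)$ directly: using the explicit realization $\model_{\psi_N}(\Speh(\pi))$ one bounds $(\varphi,\varphi)_{\Speh(\pi)}$ by a Schwartz-type seminorm of $\varphi$ times a fixed $\mu_{\pl}$-integrable function of $\pi$ — this reduces to polynomial growth of the $\psi_N$-model norms in the parameter of $\pi$ together with the known tempered bounds on $\mu_{\pl}$ for $G'$ — which both legitimizes the interchanges above and yields the last assertion of the theorem.

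The main obstacle is the contour shift and residue computation: precisely (i) showing that no continuous tempered part survives and that the intertwining periods have at most simple poles located exactly at the Speh points; (ii) computing the residues and matching them with $(\cdot,\cdot)_{\Speh(\pi)}$ as defined via $\model_{\psi_N}$ and $\ell_H$; and (iii) tracking the transformation of Plancherel densities so that the output measure on the $G$-side data becomes $\mu_{\pl}$ on $\Irr_{\temp}G'$. Controlling the meromorphic continuation and all interchanges of integration uniformly in the parameters is the technical heart of the proof.
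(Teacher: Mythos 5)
Your proposal is not the paper's argument, and as it stands it has a genuine gap: everything you defer to the ``contour shift and residue computation'' is precisely the content of the theorem, and none of it is carried out or available off the shelf. Inserting the Harish--Chandra Plancherel formula for $G=\GL_{2n}(F)$ into $\int_H\langle\lambda(h)f,f\rangle\,dh$ requires (i) a family of $H$-invariant ``intertwining period'' functionals on the full tempered spectrum of $G$ with meromorphic continuation and controlled (simple) poles, (ii) a proof that after the shift no continuous/tempered contribution survives, (iii) an identification of each residue with the explicit form $(\cdot,\cdot)_{\Speh(\pi)}$ built from the Zelevinsky model, the inner product $\bil_D$ and $\ell_H$, and (iv) the conversion of the residual densities into $\mu_{\pl}$ on $\Irr_{\temp}G'$. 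Steps (i)--(ii) amount to the Delorme/Sakellaridis--Venkatesh machinery for this symmetric space, which the paper deliberately avoids, and step (iii) is exactly the kind of explicit comparison that cannot be waved through: in the paper it is the Bessel-distribution identity of Theorem \ref{thm: cmprbes}, whose key input, Proposition \ref{prop: mainper} (the identity $\ell_H^{\ind}=\ell_H\circ\tilde\M_\pi$, proved via the model transitions of \S\ref{sec: proofmain}), has no substitute in your sketch. Your convergence argument (``polynomial growth of the $\psi_N$-model norms in the parameter of $\pi$'') is likewise asserted, not proved.

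For contrast, the actual proof never spectrally decomposes $G$ and never meets divergent tempered $H$-periods. One reduces \eqref{eq: inner} to the identity $\int_Hf(h)\,dh=\int_{\Irr_{\temp}(G')}\Bes_{\data_{\psi_N}(\Speh(\pi))}^{\ell_H,\ell_H}(f^\vee)\,d\mu_{\pl}(\pi)$ for $f\in\swrz(G)$; then the elementary ``local unfolding'' Lemma \ref{lem: inversion} (resting on the Fourier-inversion Proposition \ref{prop: modeltrans}) rewrites $\int_Hf(h)\,dh$ as $\int_{N_H\bs\mira_H}W^{G'}_{\trns(R(q)f)}(e,e)\,dq$, i.e.\ as a $\GL_n$-Whittaker quantity; one applies the Whittaker--Plancherel expansion for $G'$ from \cite{1812.00047} (Proposition \ref{prop: specexpGLn}), whose second part also furnishes the absolute convergence needed to swap the $q$- and $\pi$-integrals; finally Theorem \ref{thm: cmprbes} converts the resulting $G'$-Bessel distributions into Bessel distributions for $\Speh(\pi)$ with the functional $\ell_H$, and integrating over $N_H\bs\mira_H$ produces $\ell_H$ in both arguments. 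In short, the spectral input is only the known $\GL_n$ Plancherel/Whittaker inversion, and the new work is the explicit functional identity, not a residue calculus; your route would have to recreate that identity anyway while also supplying the missing meromorphic and vanishing statements.
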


\begin{corollary} \label{cor: main}
We have the following decomposition of unitary representations of $G$:
\[
L^2(H\bs G)\simeq \int_{\Irr_{\temp}(G')}\Speh(\pi)\ d\mu_{\pl}(\pi).
\]
In particular, an irreducible representation $\sigma$ of $G$ is relatively discrete series with respect to
$H\bs G$ if and only if $\sigma=\Speh(\pi)$ for some $\pi\in\Irr_{\disc}(G')$.
\end{corollary}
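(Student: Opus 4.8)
We record first that the corollary follows from the theorem by a routine argument: identity~\eqref{eq: inner}, together with the $G$-invariance of each form $(\cdot,\cdot)_{\Speh(\pi)}$ and the fact that it realizes $\Speh(\pi)$, yields the displayed direct-integral decomposition of $L^2(H\bs G)$, and the relative discrete spectrum --- read off from that decomposition modulo the center --- matches the atomic part of $\mu_{\pl}$ modulo the center, i.e.\ the set $\{\Speh(\pi):\pi\in\Irr_{\disc}(G')\}$. So the real content is the theorem, which I would prove as follows. \emph{Reduction.} For $\Phi\in\swrz(G)$ set $P_H(\Phi)=\int_H\Phi(h)\,dh$, an honest integral since $H\cap\operatorname{supp}\Phi$ is compact. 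If $\varphi_i\in\swrz(H\bs G)$ is the image of $f_i\in\swrz(G)$, then unfolding one of the two defining integrals against the outer integral over $H\bs G$ gives $(\varphi_1,\varphi_2)_{L^2(H\bs G)}=P_H(f_1*f_2^*)$ (with $\Phi^*(x)=\overline{\Phi(x^{-1})}$), while expanding over an orthonormal basis of $\model_{\psi_N}(\Speh(\pi))$ gives $(\varphi_1,\varphi_2)_{\Speh(\pi)}=J_{\Speh(\pi)}(f_1*f_2^*)$ for the relative character $J_\sigma(\Phi):=\sum_v\ell_H(\sigma(\Phi)v)\overline{\ell_H(v)}$. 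Hence, once the right-hand side below is shown to converge absolutely and to define an $H\times H$-invariant distribution on $\swrz(G)$, the theorem is equivalent to the distributional identity
\[
\int_H\Phi(h)\,dh=\int_{\Irr_\temp(G')}J_{\Speh(\pi)}(\Phi)\,d\mu_{\pl}(\pi),\qquad\Phi\in\swrz(G).
\]

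\emph{Analytic estimates and the explicit embedding.} The main analytic input is a supply of Harish-Chandra--Schwartz-type bounds for the generalized matrix coefficients $g\mapsto\ell_H(\Speh(\pi)(g)v)$ on $H\bs G$, uniform for $\pi$ in compacta of $\Irr_\temp(G')$ and with polynomial control in $\pi$; these I would extract from the explicit model $\model_{\psi_N}$ and the integral formula for $\ell_H$ by analysing the asymptotics of the coefficients along $\theta$-split tori (where $\theta$ is the involution of $G$ with $G^\theta=H$). With standard bounds for $\mu_{\pl}$ this gives the absolute convergence required in the reduction; moreover the same asymptotics place these coefficients in $L^{2+\varepsilon}(H\bs G)$ --- the expected relative near-temperedness of the spectrum --- which is exactly what is needed to show that the wave packets $\bar g\mapsto\int_{\Irr_\temp(G')}\ell_H(\Speh(\pi)(g)v_\pi)\,d\mu_{\pl}(\pi)$, formed from smooth compactly supported sections $\pi\mapsto v_\pi$, lie in $L^2(H\bs G)$. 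Computing their $L^2$-norms then produces an isometric $G$-embedding $\int_{\Irr_\temp(G')}\Speh(\pi)\,d\mu_{\pl}(\pi)\hookrightarrow L^2(H\bs G)$ and, in particular, the inequality ``$\ge$'' in \eqref{eq: inner}.

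\emph{Completeness, by induction on $n$.} It remains to see that this embedding is surjective. For $n=1$ this is Mellin inversion, since $\Sp_1(F)\bs\GL_2(F)\cong F^\times$ via the determinant and $\Speh(\chi)=\chi\circ\det$. For the inductive step I would use the structure of $\theta$-split parabolic subgroups $P=MU$ of $G$: up to $H$-conjugacy their Levi subgroups are $M=\GL_{2n_1}(F)\times\cdots\times\GL_{2n_r}(F)$ with $\sum n_i=n$, and $M\cap H=\Sp_{n_1}(F)\times\cdots\times\Sp_{n_r}(F)$, so $(M\cap H)\bs M\cong\prod_i\big(\Sp_{n_i}(F)\bs\GL_{2n_i}(F)\big)$ is a product of smaller instances of the present space, whose Plancherel formula is known by induction. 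Assembling $L^2(H\bs G)$ out of the representations parabolically induced from the relative discrete spectra of these Levi pieces --- in the spirit of the $p$-adic symmetric-space Plancherel theory and, concretely, via the Schwartz-space and scattering formalism of Beuzart-Plessis --- and matching the result with the right-hand side of \eqref{eq: inner}, relies on: (i) the known classification of $\Sp_n$-distinguished irreducible representations of $\GL_{2n}(F)$, which together with the relative square-integrability of $\Speh(\pi)$ for $\pi\in\Irr_{\disc}(G')$ identifies the relative discrete spectrum of $H\bs G$ with $\{\Speh(\pi):\pi\in\Irr_{\disc}(G')\}$; (ii) the compatibility of $\Speh$ with parabolic induction, $I_P\big(\Speh(\pi_1)\otimes\cdots\otimes\Speh(\pi_r)\big)\cong\Speh\big(I_{P'}(\pi_1\otimes\cdots\otimes\pi_r)\big)$, matched against Harish-Chandra's Plancherel formula for $G'$; and (iii) the equality of the relevant Harish-Chandra $\mu$-functions for $H\bs G$ and $G'$ --- both restricted root systems being of type $A_{n-1}$ --- which forces the continuous Plancherel densities on the two sides to coincide and fixes the normalization of Haar measures.

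\emph{Main obstacle.} I expect the crux to lie in the fine analysis of the relative discrete series: proving that $\Speh(\pi)$, $\pi\in\Irr_{\disc}(G')$, is relatively square-integrable for $H\bs G$ and computing the exact constant with which its relative character contributes --- a ``relative Maass--Selberg'' computation carried out inside $\model_{\psi_N}(\Speh(\pi))$ through the asymptotics of $\ell_H$ --- since this anchors the induction and pins down all normalizations; the uniform Harish-Chandra--Schwartz estimates underlying the absolute convergence and the $L^2$-membership of the wave packets are the other technically demanding ingredient, though they should follow the Beuzart-Plessis template.
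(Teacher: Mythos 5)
Your opening observation is right and matches the paper: the corollary is indeed a formal consequence of Theorem \ref{thm: main} (together with its central-character variant for the discrete-series statement), and your reduction of the theorem to the distributional identity $\int_H\Phi(h)\,dh=\int_{\Irr_{\temp}(G')}J_{\Speh(\pi)}(\Phi)\,d\mu_{\pl}(\pi)$ is precisely the paper's first step, equation \eqref{eq: intH}. From that point on, however, the paper does something entirely different from what you propose: it proves the identity directly by (a) a local unfolding of $\int_H\Phi$ over $N_H\bs\mira_H$ (Lemma \ref{lem: inversion}, resting on Proposition \ref{prop: modeltrans}), (b) the known Whittaker--Plancherel expansion for $G'=\GL_n(F)$ (Proposition \ref{prop: specexpGLn}, taken from Beuzart-Plessis), and (c) the identity of Bessel distributions of Theorem \ref{thm: cmprbes}, whose key ingredient is Proposition \ref{prop: mainper} ($\ell_H^{\ind}=\ell_H\circ\tilde\M_\pi$), proved by explicit model transitions between the symplectic and Zelevinsky models. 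No wave packets, no scattering theory, no induction on $n$, and no computation of relative $\mu$-functions are needed; all normalizations come for free from the $\GL_n$ Plancherel formula and the explicit intertwining-operator identity.

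Your alternative route, as written, has genuine gaps. First, in the ``completeness'' step you take as input ``the known classification'' identifying the relative discrete spectrum of $H\bs G$ with $\{\Speh(\pi):\pi\in\Irr_{\disc}(G')\}$; but the exhaustion direction (that nothing else is relatively square-integrable) is exactly part of the corollary you are proving, and in the paper it is a \emph{consequence} of the theorem, not an input -- what is known independently (Jacquet, Smith) is only that these $\Speh(\pi)$ are relatively discrete. Without an independent proof of exhaustion (say via a classification of distinguished unitarizable representations plus an exponent criterion), your induction is circular. Second, the assertion that the continuous Plancherel densities must agree with $\mu_{\pl}$ ``because both restricted root systems are of type $A_{n-1}$'' is not a proof: the density is not determined by the root-system type but by the relative $\mu$-functions/normalized intertwining operators and formal degrees, and pinning down these constants is precisely the hard ``relative Maass--Selberg'' computation you defer. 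Third, the uniform Harish-Chandra--Schwartz bounds and $L^2$-membership of wave packets are not routine here, since $\Speh(\pi)$ is non-tempered as a representation of $G$ (the paper explicitly flags that the support of the Plancherel measure lying off the tempered spectrum of $G$ creates technical difficulties); asserting that these estimates ``should follow the Beuzart-Plessis template'' leaves the analytically delicate part of your program unproved. In short, your plan amounts to re-deriving the abstract symmetric-space Plancherel theory and then matching it with the explicit answer, whereas the paper bypasses all of this with the explicit Bessel-distribution identity.
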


Recall that a representation $\sigma\in\Irr G$ is called relatively discrete series if it has a unitary central character
and it admits a non-trivial $H$-invariant form $\ell$ such that for any $v$ in the space of $\sigma$, the matrix coefficients
$\ell(\sigma(g)v)$ is square-integrable on $ZH\bs G$ where $Z$ is the center of $G$. Equivalently, $\sigma$ occurs discretely in
the space $L^2(ZH\bs G;\omega_\sigma)$ of left $H$-invariant functions on $G$ that are $Z$-equivariant
under the central character $\omega_\sigma$ of $\sigma$ and are square-integrable on $ZH\bs G$.

We remark that the fact that $\Speh(\pi)$ is relatively discrete series for any $\pi\in\Irr_{\disc}(G')$
had been proved by Jacquet (unpublished) and independently by Smith \cite{1812.04091}.

\subsection{}
We now describe the abovementioned explicit realization of $\sigma=\Speh(\pi)$,
together with the inner product and the $H$-invariant functional in this realization.

We will use the Zelevinsky model of $\sigma$. (See \cite{MR584084}*{\S8}, where the terminology ``degenerate Whittaker model'' is used.)
More precisely, let $N$ be the maximal unipotent subgroup of $G$ consisting of upper unitriangular matrices.
Let $\psi_N$ be a character on $N$ that is trivial on $U$ and whose restriction to $N\cap M$ is non-degenerate.
Then, up to a constant there exists a unique $(N,\psi_N)$-equivariant functional on $\sigma$,
and this gives rise to a unique realization $\model_{\psi_N}(\sigma)$ of $\sigma$ in the space of left $(N,\psi_N)$-equivariant
functions on $G$.

The inner product on $\model_{\psi_N}(\sigma)$ is defined as follows.
Let $D$ be the joint stabilizer of $e_n$ and $e_{2n}$ in $G$, i.e., the subgroup of matrices in $G$
whose $n$-th and $2n$-th row are $e_n$ and $e_{2n}$ respectively.
Then, it was proved in \cite{1806.10528} that the integral
\[
[W_1,W_2]=\int_{D\cap N\bs D}W_1(g)\overline{W_2(g)}\ dg,\ \ W_1,W_2\in\model_{\psi_N}(\sigma)
\]
converges and is $G$-invariant. (The result in [ibid.] is in fact for an arbitrary Speh representation.)
Note that the working assumption in [ibid.] was that $F$ is of characteristic $0$.
However, this is inessential, as explained in the Appendix by Dmitry Gourevitch.

To define the $H$-invariant functional on $\model_{\psi_N}(\sigma)$, we assume in addition that
$\psi_N$ is trivial on $N\cap H$. Let $\mira$ be the mirabolic subgroup of $G$ (the stabilizer of $e_{2n}$).
Then, as we prove in \S\ref{sec: zel2sym}, the integral
\[
\ell_H(W)=\int_{N\cap H\bs \mira\cap H}W(h)\ dh,\ \ W\in\model_{\psi_N}(\sigma)
\]
converges and is $H$-invariant.

Note that since the characters of $N$ that are trivial on $U(N\cap H)$ and are non-degenerate on $N\cap M$
form a single $T\cap H$-orbit (where $T$ is the diagonal torus of $G$), the veracity of Theorem \ref{thm: main}
is independent of the choice of $\psi_N$. Thus, we are free to choose $\psi_N$.
Of course, the choice of Haar measures will depend on $\psi_N$.

\subsection{}
Theorem \ref{thm: main} and its proof are modeled in part after the recent remarkable paper \cite{1812.00047} of Beuzart-Plessis.
In fact, our case is simpler since we do not need the intricate limit analysis of
\cite{1812.00047}*{\S3}. This has to do with the fact that the functoriality in  \cite{1812.00047}
is base change while in our case it is just taking Langlands quotient.
On the flip side, the Plancherel measure of $L^2(H\bs G)$ is supported off the tempered spectrum of $G$,
and this creates additional technical difficulties.

The main new input is an identity described in Theorem \ref{thm: cmprbes} below, which is
a local analogue of \cite{MR2058616}*{Theorem 4}.
It is based on two ingredients.
The first is a relation, proved in \cite{1806.10528}*{Appendix A} between the inner product
on $\model_{\psi_N}(\sigma)$ and the standard invariant pairing between $I_P(\pi\otimes\pi,\varpi)$
and $I_P(\tilde\pi\otimes\tilde\pi,\varpi^{-1})$, where $\tilde\pi$ is the contragredient of $\pi$.
The second is a relation (essentially a local analogue of \cite{MR2058616}*{Theorem 2})
between $\ell_H$ and an $H$-invariant functional on $I_P(\pi\otimes\pi,\varpi)$ which
is a local analogue of the one considered in \cite{MR1142486}.
Both relations involve the standard intertwining operator from $I_P(\pi\otimes\pi,\varpi)$ to
$I_P(\pi\otimes\pi,\varpi^{-1})$.

\subsection{}
We also have a variant of Theorem \ref{thm: main} and Corollary \ref{cor: main} for the symplectic similitude group
$\tilde H=\GSp_n(F)$.
The point is that the invariant functional $\ell_H$ is $(\tilde H,\omega_\pi)$-equivariant
where a character of $F^*$ is viewed as a character of $\tilde H$ via the similitude factor.
Fix a unitary character $\chi$ of $F^*$.
Let $\Irr_{\temp}^\chi(G')\subset\Irr_{\temp}(G')$ be the subset of tempered representations with central character $\chi$
and let $\mu_{\pl}^\chi$ be the Plancherel measure on $\Irr_{\temp}^\chi G'$,
characterized by the relation
\[
f(e)=\int_{\Irr_{\temp}^\chi G'}\tr\pi(f)\ d\mu_{\pl}^\chi(\pi),\ \ f\in\swrz(Z'\bs G';\chi^{-1}),
\]
where $Z'$ is the center of $G'$ and $\pi(f)=\int_{Z'\bs G'}f(g)\pi(g)\ dg$.
(Here, $\swrz(Z'\bs G';\chi^{-1})$ denotes the space of locally constant functions $f$ on $G$ such that
$f(zg)=\chi(z)^{-1}f(g)$ for all $z\in Z'$, $g\in G'$ and $f$ is compactly supported modulo $Z'$.)
Equivalently, for any smooth, compactly supported function $h$ on $\Irr_{\temp} G'$ we have
\begin{equation} \label{eq: defmuchi}
\int_{F^*}\big(\int_{\Irr_{\temp} G'} h(\pi)\omega_\pi^{-1}(z)\ d\mu_{\pl}(\pi)\big)\chi(z)\ dz=
\int_{\Irr_{\temp}^\chi G'} h(\pi)\ d\mu_{\pl}^\chi(\pi),
\end{equation}
where the left-hand side converges as an iterated integral.

For any $\varphi\in\swrz(H\bs G)$ let $\tilde\varphi(g)=\int_{H\bs\tilde H}\varphi(tg)\chi(t)\ dt$.
Thus, $\varphi\mapsto\tilde\varphi$ defines a projection $\swrz(H\bs G)\rightarrow\swrz(\tilde H\bs G;\chi^{-1})$.
For any $\pi\in\Pi_{\temp}^\chi$ the positive semi-definite hermitian form $(\varphi_1,\varphi_2)_{\Speh(\pi)}$ depends only on
$\tilde\varphi_i$, $i=1,2$. We continue to denote the resulting form on $\swrz(\tilde H\bs G;\chi^{-1})$
by $(\cdot,\cdot)_{\Speh(\pi)}$.
Denote by $L^2(\tilde H\bs G;\chi^{-1})$ the Hilbert space of $(\tilde H,\chi^{-1})$-equivariant functions
that are square-integrable modulo $\tilde H$.

\begin{theorem} \label{thm: gsp}
For any $\varphi_1,\varphi_2\in\swrz(\tilde H\bs G;\chi^{-1})$ we have
\[
(\varphi_1,\varphi_2)_{L^2(\tilde H\bs G;\chi^{-1})}=\int_{\Irr_{\temp}(G')^\chi}(\varphi_1,\varphi_2)_{\Speh(\pi)}\ d\mu_{\pl}^\chi(\pi).
\]
Then we have a decomposition of Hilbert $G$-representations
\[
L^2(\tilde H\bs G;\chi^{-1})\simeq \int_{\Irr_{\temp}^\chi(G')}\Speh(\pi)\ d\mu_{\pl}^\chi(\pi).
\]
\end{theorem}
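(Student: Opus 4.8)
The plan is to deduce Theorem~\ref{thm: gsp} from Theorem~\ref{thm: main} by decomposing $L^2(H\bs G)$ along the left action of $\tilde H/H$. Since $H$ is the kernel of the similitude character $\lambda\colon\tilde H\to F^*$, the abelian group $\tilde H/H\cong F^*$ acts on $H\bs G$ on the left, commuting with the right $G$-action, and $\varphi\mapsto\tilde\varphi$ (whose dependence on $\chi$ I will make explicit by writing $\tilde\varphi^{\chi}$) is, up to normalization, the Mellin transform of $\varphi$ along the fibres of $H\bs G\to\tilde H\bs G$. Fourier inversion on $F^*$ then gives, for compatible normalizations of the Haar measures, a $G$-equivariant unitary identification $L^2(H\bs G)\cong\int_{\widehat{F^*}}^{\oplus}L^2(\tilde H\bs G;\chi^{-1})\,d\chi$ and, on smooth vectors, $(\varphi_1,\varphi_2)_{L^2(H\bs G)}=\int_{\widehat{F^*}}(\tilde\varphi_1^{\chi},\tilde\varphi_2^{\chi})_{L^2(\tilde H\bs G;\chi^{-1})}\,d\chi$ for all $\varphi_1,\varphi_2\in\swrz(H\bs G)$, the left $\tilde H/H$-action being multiplication by $\chi$ on the $\chi$-th fibre.

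Next I would feed this into the right-hand side of \eqref{eq: inner}. As already noted in the text, $\ell_H$ is $(\tilde H,\omega_\pi\circ\lambda)$-equivariant, so for $\pi\in\Irr_{\temp}^{\chi}(G')$ the form $(\varphi_1,\varphi_2)_{\Speh(\pi)}$ factors through $\varphi_i\mapsto\tilde\varphi_i^{\chi}$, coincides with the form $(\tilde\varphi_1^{\chi},\tilde\varphi_2^{\chi})_{\Speh(\pi)}$ on $\swrz(\tilde H\bs G;\chi^{-1})$, and $\tilde H/H$ acts through $\omega_\pi$ on the $\Speh(\pi)$-isotypic part of the model of $L^2(H\bs G)$ furnished by Theorem~\ref{thm: main}. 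On the other hand, \eqref{eq: defmuchi} read through Fourier inversion on $F^*$ is exactly the disintegration $\mu_{\pl}=\int_{\widehat{F^*}}^{\oplus}\mu_{\pl}^{\chi}\,d\chi$ of $\mu_{\pl}$ over the continuous map $\pi\mapsto\omega_\pi$. Combining these, the decomposition of Theorem~\ref{thm: main} can be rewritten as $L^2(H\bs G)\cong\int_{\widehat{F^*}}^{\oplus}\bigl(\int_{\Irr_{\temp}^{\chi}(G')}^{\oplus}\Speh(\pi)\,d\mu_{\pl}^{\chi}(\pi)\bigr)\,d\chi$, again compatibly with the left $\tilde H/H$-action, which is multiplication by $\chi$ on the $\chi$-th (inner) fibre.

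Now both of these are direct-integral decompositions of the single unitary $(\tilde H/H)\times G$-module $L^2(H\bs G)$ refining the spectral decomposition of the intrinsic $\tilde H/H$-action; since that action commutes with $G$, uniqueness of the spectral decomposition of a unitary representation of the abelian group $F^*$ forces the two to agree $G$-equivariantly, fibre by fibre, for almost every $\chi$. Upgrading this to every $\chi$ by continuity of the objects involved in the parameter $\chi$, we obtain $L^2(\tilde H\bs G;\chi^{-1})\cong\int_{\Irr_{\temp}^{\chi}(G')}^{\oplus}\Speh(\pi)\,d\mu_{\pl}^{\chi}(\pi)$ for the prescribed $\chi$; unwinding this yields the claimed inner-product identity, and absolute convergence of its right-hand side is inherited from that in Theorem~\ref{thm: main} via the disintegration of $\mu_{\pl}$. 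The displayed decomposition of Hilbert $G$-representations then follows from the inner-product identity and density of $\swrz(\tilde H\bs G;\chi^{-1})$, exactly as Corollary~\ref{cor: main} is extracted from Theorem~\ref{thm: main}.

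The step I expect to be the main obstacle is precisely this passage from the $\chi$-integrated identity to the pointwise statement at the prescribed $\chi$. One concrete way to carry it out, closer in spirit to the rest of the paper, is to convolve $\varphi_1$ and $\varphi_2$ with elements $\beta\in\swrz(F^*)$ whose Mellin transforms concentrate near $\chi$ — this only multiplies $\tilde\varphi_i^{\chi'}$ by $\widehat\beta(\chi')$ — so that the integrated identity localizes, in the limit, to the desired pointwise one; but either route requires checking that $\chi\mapsto(\tilde\varphi_1^{\chi},\tilde\varphi_2^{\chi})_{L^2(\tilde H\bs G;\chi^{-1})}$ and $\chi\mapsto\int_{\Irr_{\temp}^{\chi}(G')}(\tilde\varphi_1^{\chi},\tilde\varphi_2^{\chi})_{\Speh(\pi)}\,d\mu_{\pl}^{\chi}(\pi)$ depend continuously on $\chi$, that $\mu_{\pl}=\int\mu_{\pl}^{\chi}\,d\chi$ is genuinely continuous rather than merely measurable in $\chi$, and that the iterated integral in \eqref{eq: defmuchi} is justified by Fubini. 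These analytic points I expect to be routine, but they do require verification.
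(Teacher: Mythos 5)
Your overall strategy (Mellin analysis along $H\bs\tilde H\simeq F^*$, equivariance of the forms $(\cdot,\cdot)_{\Speh(\pi)}$, and the disintegration \eqref{eq: defmuchi}) uses the right ingredients, but as written it has a genuine gap at exactly the point you flag: the passage from an identification of direct integrals over $\widehat{F^*}$, which uniqueness of the spectral decomposition gives only for \emph{almost every} $\chi$, to the assertion at the one prescribed $\chi$. Your proposed repairs are not carried out: the ``upgrade by continuity'' presupposes, and your localization-by-convolution variant explicitly requires, that $\chi'\mapsto\int_{\Irr_{\temp}^{\chi'}(G')}(\varphi_1,\varphi_2)_{\Speh(\pi)}\,d\mu_{\pl}^{\chi'}(\pi)$ is continuous, i.e.\ a regularity in $\chi$ of the family $\mu_{\pl}^{\chi}$ beyond the iterated-integral identity \eqref{eq: defmuchi}. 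Nothing in the paper (or in your proposal) establishes this, so the key step of your argument remains an expectation rather than a proof. A smaller omitted point is the surjectivity of $\varphi\mapsto\tilde\varphi$ onto $\swrz(\tilde H\bs G;\chi^{-1})$, which is needed to ``unwind'' back to arbitrary $\varphi_i$; this one is easy, but it should be said.

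The paper avoids the whole difficulty by never leaving the fixed $\chi$: for $\varphi_i\in\swrz(H\bs G)$ one computes directly
\[
(\tilde\varphi_1,\tilde\varphi_2)_{L^2(\tilde H\bs G;\chi^{-1})}=\int_{H\bs\tilde H}(\varphi_1(t\cdot),\varphi_2)_{L^2(H\bs G)}\,\chi(\tilde\lambda(t))\ dt,
\]
applies Theorem \ref{thm: main} to each inner product $(\varphi_1(t\cdot),\varphi_2)_{L^2(H\bs G)}$, uses the $(\tilde H,\omega_\pi\circ\tilde\lambda)$-equivariance of $\ell_H$ from Corollary \ref{cor: modeltrans} to write $(\varphi_1(t\cdot),\varphi_2)_{\Speh(\pi)}=\omega_\pi(\tilde\lambda(t))^{-1}(\varphi_1,\varphi_2)_{\Speh(\pi)}$, and then invokes \eqref{eq: defmuchi} with $h(\pi)=(\varphi_1,\varphi_2)_{\Speh(\pi)}$. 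Since \eqref{eq: defmuchi} is already a statement valid for each individual $\chi$ (it is the characterization of $\mu_{\pl}^{\chi}$), this yields the inner-product identity pointwise in $\chi$ with no direct-integral uniqueness, no almost-everywhere statement, and no continuity of $\chi\mapsto\mu_{\pl}^{\chi}$ required; the Hilbert-space decomposition then follows as in Corollary \ref{cor: main}. I would recommend restructuring your argument along these lines: use \eqref{eq: defmuchi} as a pointwise-in-$\chi$ identity rather than re-encoding it as a measurable disintegration that you must subsequently re-localize.
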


\subsection{Notation}
We introduce some more notation that will be used throughout.
\begin{itemize}
\item For a locally compact group $X$ we denote by $\modulus_X$ its modulus function and
$R(\cdot)$ and $L(\cdot)$ the right and left regular representations of $X$ on itself.
\item The groups $G, P=M\ltimes U, N, \mira, D, G', Z', H,\tilde H$ are as above.
\item Let $\K=\GL_{2n}(\OOO)$ be the standard maximal compact subgroup of $G$.
\item Let $\crnr:G'\rightarrow G$ be the embedding $g\mapsto\sm g{}{}{I_n}$.
\item We denote by $\tilde\lambda$ the similitude character of $\tilde H$ and by $\tilde\lambda^\vee$
the cocharacter $\tilde\lambda^\vee(a)=\crnr(aI_n)$.
\item Let $\mira'$ be the mirabolic subgroup of $G'$, i.e., the stabilizer of $e_n$.
\item For any subgroup $X$ of $G$ we write $X_H=X\cap H$ and $X_{\tilde H}=X\cap\tilde H$.
\item In particular, $P_H=M_H\ltimes U_H$ is the Siegel parabolic subgroup of $H$ with its standard Levi decomposition.
\item Likewise, $P_{\tilde H}=M_{\tilde H}U_{\tilde H}$ is a maximal parabolic subgroup of $\tilde H$,
$M_{\tilde H}=M_H\times\crnr(Z')$ and $U_{\tilde H}=U_H$.
\item We denote by $\embd:G'\rightarrow M_H$ the isomorphism given by
\[
\embd(g)=\sm{g^*}{}{}g,\text{ where }g^*=w_n\,^tg^{-1}w_n.
\]
\item A basic fact is that
\begin{equation} \label{eq: mod}
\modulus_{P_{\tilde H}}=\modulus_P^{\frac12}\varpi\text{ on }M_{\tilde H}\text{ and in particular, } \modulus_{P_H}=\modulus_P^{\frac12}\varpi\text{ on }M_H.
\end{equation}
\item We also write $H_{n-1}$ for the group $\Sp_{n-1}(F)$ viewed as a subgroup of $H$ via $h\mapsto\left(\begin{smallmatrix}1&&\\&h&\\&&1\end{smallmatrix}\right)$.
\item Let $w_U=\sm{}{I_n}{-I_n}{}\in H$, an element that normalizes $M$ and $M_H$.
\item We fix once and for all a non-trivial character $\psi$ of $F$.
\item Let $N'$ be the subgroup of unitriangular matrices in $G'$ and let $\psi_{N'}$ be the non-degenerate character on $N'$ given by
\[
\psi_{N'}(g)=\psi(g_{1,2}+\dots+g_{n-1,n}).
\]
\item Let $N_M=N\cap M\simeq N'\times N'$ and let $\psi_{N_M}$ be the non-degenerate character on $N_M$ given by
\[
\psi_{N_M}(\sm{g_1}{}{}{g_2})=\psi_{N'}(g_1g_2).
\]
\item The character $\psi_N$ on $N$ is the one that is trivial on $U$ and restricts to $\psi_{N_M}$ on $N_M$.
\item The convention of Haar measures will be as in \cite{1812.00047}*{\S2.5}.
The character $\psi$ gives rise to a Haar measure on $F$ which is self-dual with respect to $\psi$.
In turn, this gives rise to a Haar measure on $X(F)$ for any linear algebraic group $X$ over $F$ with a $\Z$-model
for $X_{\bar F}$. In particular, if $X$ is reductive, then we get a canonical measure.
For instance, for $G$ itself the measure is $\abs{\det g}^{-2n}\prod_{i,j}dg_{i,j}$
where $g_{i,j}$ are the coordinates of $g$.
All the abovementioned algebraic subgroups of $G$ and $H$ have ``obvious'' $\Z$-models,
as they are defined by vanishing of coordinates in $G$ or in $H$.
This will be implicitly used to define Haar measure on them.
\item We will write $A\ll B$ to signify that $A$ is bounded by a constant multiple of $B$.
If the implied constant depends on an additional parameter, say $x$, we will write $A\ll_x B$.
\end{itemize}

\section{An identity of Bessel distributions} \label{sec: besidn}

\subsection{}
Let $V$ and $V^\vee$ be two admissible smooth representations of an $\ell$-group $X$ and let
\[
\bil:V\times V^\vee\rightarrow\C
\]
be an $X$-invariant non-degenerate bilinear form.
Thus, $\bil$ defines an isomorphism between $V^\vee$ and the smooth dual of $V$.
We refer to $(V,V^\vee,\bil)$ with the group action as $X$-representations in duality.
(Normally, the group action will be clear from the context so for simplicity we do not include it
in the notation.)

A morphism
\[
(\Phi,\Phi^\vee):(V_1,V_1^\vee,\bil_1)\rightarrow(V_2,V_2^\vee,\bil_2)
\]
of $X$-representations in duality is a pair of intertwining operators
$\Phi:V_1\rightarrow V_2$, $\Phi^\vee:V_2^\vee\rightarrow V_1^\vee$ such that
$\bil_2(\Phi v_1,v_2^\vee)=\bil_1(v_1,\Phi^\vee v_2^\vee)$ for all $v_1\in V_1$, $v_2\in V_2^\vee$.

In general, we denote by $\tilde\pi$ the contragredient of a representation $\pi$ (i.e., the smooth
dual of $\pi$).

We will consider the following examples pertaining to an irreducible tempered representation
$\pi$ of $G'=\GL_n(F)$.

\begin{example}
Let $\model_{\psi_{N'}}(\pi)$ be the Whittaker model of $\pi$ with respect to $\psi_{N'}$, with right translation.
Define a bilinear form
\[
\bil_{\mira'}(W,W^\vee)=\int_{N'\bs\mira'}W(g)W^\vee(g)\ dg
\]
on $\model_{\psi_{N'}}(\pi)\times\model_{\psi_{N'}^{-1}}(\tilde\pi)$.
The integral is absolutely convergent and by a well-known result of Bernstein,
$\bil_{\mira'}$ is $G'$-invariant \cite{MR748505}. Thus, the tuple
\[
\data_{\psi_{N'}}(\pi)=(\model_{\psi_{N'}}(\pi),\model_{\psi_{N'}^{-1}}(\tilde\pi),\bil_{\mira'})
\]
is $G'$-representations in duality. Moreover,
\[
\model_{\psi_{N'}^{-1}}(\tilde\pi)=\{W^*:W\in\model_{\psi_{N'}}(\pi)\}\ \ \text{where }W^*(g)=W(g^*).
\]

Similarly, we may also consider
\[
\data_{\psi_{N_M}}(\pi\otimes\pi)=(\model_{\psi_{N_M}}(\pi\otimes\pi),\model_{\psi_{N_M}^{-1}}(\tilde\pi\otimes\tilde\pi),
\bil_{\mira'\times\mira'}=\bil_{\mira'}\otimes\bil_{\mira'})
\]
of $M$-representations in duality.
Note that $\model_{\psi_{N_M}}(\pi\otimes\pi)=\model_{\psi_{N'}}(\pi)\otimes\model_{\psi_{N'}}(\pi)$ and similarly for
$\model_{\psi_{N_M}^{-1}}(\tilde\pi\otimes\tilde\pi)$.
\end{example}

\begin{example}
Let $\data=(V,V^\vee,\bil)$ be $M$-representations in duality. Denote the representation on $V$ by $\tau$.
For any character $\chi$ of $M$ consider the induced representation $I_P(V,\chi)$
realized in the space of functions $\varphi:G\rightarrow V$ such that
\begin{equation} \label{eq: rtrns}
\varphi(mg)=\modulus_P^{\frac12}(m)\chi(m)\tau(m)(\varphi(g))\ \ \forall m\in M, g\in G.
\end{equation}
The action of $G$ is by right translation: $I_P(g,\chi)\varphi(x)=\varphi(xg)$.

Then, we have a $G$-duality data
\[
I_P(\data,\chi)=(I_P(V,\chi),I_P(V^\vee,\chi^{-1}),\bil_{P\bs G})
\]
where
\[
\bil_{P\bs G}(\varphi,\varphi^\vee)=\int_{P\bs G}\bil(\varphi(g),\varphi^\vee(g))\ dg.
\]
\end{example}

\begin{example}
Let $\sigma=\Speh(\pi)\in\Irr G$. Note that $\tilde\sigma=\Speh(\tilde\pi)$. Consider
the Zelevinsky model $\model_{\psi_N}(\sigma)$ of $\sigma$.
Recall that $D$ is the joint stabilizer of $e_n$ and $e_{2n}$.
In \cite{1806.10528} the bilinear form
\[
\bil_D(W,W^\vee)=\int_{D\cap N\bs D}W(g)W^\vee(g)\ dg
\]
on $\model_{\psi_N}(\sigma)\times\model_{\psi_N^{-1}}(\tilde\sigma)$ is introduced and it is shown to be $G$-invariant.
We write
\[
\data_{\psi_N}(\sigma)=(\model_{\psi_N}(\sigma),\model_{\psi_N^{-1}}(\tilde\sigma),\bil_D)
\]
for the $G$-representations in duality.
\end{example}

Note that $\model_{\psi_N^{-1}}(\tilde\sigma)$ can be considered as a subrepresentation of $I_P(\model_{\psi_{N_M}^{-1}}(\tilde\pi\otimes\tilde\pi),\varpi^{-1})$
via the embedding
\[
[\iota^\vee(W)(g)](m)=\modulus_P(m)^{-\frac12}\varpi(m)W(mg),\ m\in M,\ g\in G.
\]

Let
\[
\M_\pi:I_P(\model_{\psi_{N_M}}(\pi\otimes\pi),\varpi)\rightarrow I_P(\model_{\psi_{N_M}}(\pi\otimes\pi),\varpi^{-1})
\]
be the intertwining operator given by the absolutely convergent integral
\[
[\M_\pi\varphi(g)](x)=\int_U\varphi(w_U^{-1}ug)(w_U^{-1}xw_U)\ du,\ \ g\in G, x\in M.
\]
(Recall that $w_U=\sm{}{I_n}{-I_n}{}\in H$.)
Let $\tilde\M_\pi$ be the composition of $\M_\pi$ with the injective map
$I_P(\model_{\psi_{N_M}}(\pi\otimes\pi),\varpi^{-1})\rightarrow C^\infty(N\bs G,\psi_N)$ given by $\varphi\mapsto \varphi(g)(e)$.
Then, the image of $\tilde\M_\pi$ is $\model_{\psi_N}(\sigma)$.

By \cite{1806.10528}*{Appendix A}, the pair $(\tilde\M_\pi,\iota^\vee)$ is a morphism of $G$-representations in duality
\[
I_P(\data_{\psi_{N_M}}(\pi\otimes\pi),\varpi)\rightarrow\data_{\psi_N}(\Speh(\pi)).
\]

\subsection{}
Suppose that $\data=(V,V^\vee,\bil)$ is admissible $X$-representations in duality.
Denote by $\pi$ (resp. $\pi^\vee$) the corresponding representation of $X$ on $V$ (resp. $V^\vee$).
Using $\bil$ we may identify $V\otimes V^\vee$ with the space $\End_{\smth}(V)$ of smooth linear endomorphisms of $V$
(i.e., the linear maps $A:V\rightarrow V$ such that $A\pi(g)=\pi(g)A=A$ for all $g$ in a sufficiently small
open subgroup of $X$).
In particular, (fixing a Haar measure on $X$) for any $f\in\swrz(X)$ we view $\pi(f)$ as an element of $V\otimes V^\vee$.
For any $\ell\in V^*$ (the algebraic dual of $V$) and $\ell^\vee\in (V^\vee)^*$ define
\[
\Bes_{\data}^{\ell,\ell^\vee}(f)=(\ell\otimes\ell^\vee)[\pi(f)]=\ell^\vee(\ell\circ\pi(f))
\]
(where the smooth functional $\ell\circ\pi(f)$ on $V$ is viewed as an element of $V^\vee$ via $\bil$).

Let $\basis$ be a basis for $V$ and let $K$ be an open subgroup of $X$.
We say that $\basis$ is compatible with $K$ if
for every $v\in\basis$, either $v$ is $K$-invariant or $\int_K\pi(k)v\ dk=0$.
We say that $\basis$ is \emph{admissible} if it is compatible with a family of open subgroups of $X$ that
form a neighborhood base for the identity.

To construct an admissible basis, fix a compact open subgroup $K$ of $X$ and for each
irreducible representation $\tau$ of $K$ take a basis $\basis_\tau$ for the $\tau$-isotypic part of $V$.
Then, $\basis=\cup_\tau\basis_\tau$ is compatible with any normal open subgroup of $K$.

If $\basis$ is an admissible basis, then we can form the admissible dual basis $\basis^\vee$ for $V^\vee$
with a bijection $\tilde{}:\basis\rightarrow\basis^\vee$ such that
\[
\bil(v,\tilde u)=\delta_{u,v}\ \ u,v\in\basis.
\]
Indeed, for each $v\in\basis$ define $\tilde v\in V^*$ by $\bil(u,\tilde v)=\delta_{u,v}$ for all $u\in\basis$.
Then, it is easy to see that if $v$ is $K$-invariant and $\basis$ is compatible with $K$,
then $\tilde v$ is $K$-invariant. Moreover, if $\basis$ is compatible with $K$, then so is $\basis^\vee$.

Some elementary facts about Bessel distributions are itemized in the following. (We assume for simplicity that $X$ is unimodular.)
\begin{lemma} \label{lem: beselem}
\begin{enumerate}
\item We have
\[
\Bes_{\data}^{\ell,\ell^\vee}(f)=\Bes_{\data^\circ}^{\ell^\vee,\ell}(f^\vee)
\]
where $\data^\circ=(V^\vee,V,\bil^\circ)$, $\bil^\circ(v^\vee,v)=\bil(v,v^\vee)$ and $f^\vee(g)=f(g^{-1})$.
\item Suppose that $\basis$ is an admissible basis for $V$ and let $\basis^\vee$ be the dual basis for $V^\vee$.
Then,
\begin{equation} \label{eq: besOB}
\Bes_{\data}^{\ell,\ell^\vee}(f)=\sum_{v\in\basis}\ell(\pi(f)v)\ell^\vee(\tilde v)
\end{equation}
where only finitely many terms in the sum are non-zero.
\item For any $g_1,g_2\in X$ we have
\[
\Bes_{\data}^{\ell,\ell^\vee}(L(g_1)R(g_2)f)=\Bes_{\data}^{\ell\circ\pi(g_1),\ell^\vee\circ\pi^\vee(g_2)}(f).
\]
\item \label{part: inter}
If $(\Phi,\Phi^\vee):\data_1=(V_1,V_1^\vee,\bil_1)\rightarrow\data_2=(V_2,V_2^\vee,\bil_2)$ is a homomorphism of
representations in duality, then for any $\ell\in V_2^*$ and $\ell^\vee\in (V_1^\vee)^*$ we have
\[
\Bes_{\data_2}^{\ell,\ell^\vee\circ\Phi^\vee}(f)=\Bes_{\data_1}^{\ell\circ\Phi,\ell^\vee}(f).
\]
\item \label{part: convo} View $\bil$ as a linear form on $V\otimes V^\vee$.
Then, for any $f\in\swrz(X\times X)$ we have
\[
\Bes_{\data\otimes\data^\circ}^{\bil,\ell^\vee\otimes\ell}(f)=
\Bes_{\data}^{\ell,\ell^\vee}(f')
\]
where $f'\in\swrz(X)$ is given by
\[
f'(x)=\int_X f(yx,y)\ dy,\ x\in X.
\]
\end{enumerate}
\end{lemma}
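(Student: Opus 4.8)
The plan is to reduce every item to one structural fact. Once $\bil$ is used to identify $V\otimes V^\vee$ with $\End_{\smth}(V)$, the operator $\pi(f)$ is a finite-rank smooth endomorphism: for a suitable compact open $K$ the function $f$ is bi-$K$-invariant, so $\pi(f)$ both factors through the projection of $V$ onto $V^K$ and has image in $V^K$, and $V^K$ is finite-dimensional by admissibility. I would use this first to prove part (2). Given an admissible basis $\basis$ with dual basis $\basis^\vee$, pick $K$ from the associated neighborhood base so that $\basis$ is compatible with $K$ and $f$ is bi-$K$-invariant; then $\pi(f)v=\pi(f)\big(\vol(K)^{-1}\int_K\pi(k)v\,dk\big)$ vanishes unless $v$ is $K$-fixed, so only finitely many $v\in\basis$ contribute, and the finite identity
\[
\pi(f)=\sum_{v\in\basis}(\pi(f)v)\otimes\tilde v
\]
holds in $\End_{\smth}(V)$ — one verifies it by evaluating both sides on basis vectors. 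Applying $\ell\otimes\ell^\vee$ gives \eqref{eq: besOB}. Part (2) then serves as the computational engine for everything else.

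For part (1) the key is that $\pi^\vee(f^\vee)$ equals the transpose $\pi(f)^t\colon V^\vee\to V^\vee$ of $\pi(f)$ with respect to $\bil$: using $X$-invariance of $\bil$ together with the substitution $x\mapsto x^{-1}$ (this is where unimodularity of $X$ enters) one gets $\bil(\pi(f)v,v^\vee)=\bil(v,\pi^\vee(f^\vee)v^\vee)$ for all $v,v^\vee$. Writing $\pi(f)=\sum_j v_j\otimes v_j^\vee$, the element of $V^\vee\otimes V$ representing $\pi^\vee(f^\vee)$ is $\sum_j v_j^\vee\otimes v_j$, and pairing with $\ell^\vee\otimes\ell$ gives the identity. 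Part (3) is a similar unwinding: a change of variables shows $\pi(L(g_1)R(g_2)f)=\pi(g_1)\,\pi(f)\,\pi(g_2)^{-1}$ in $\End_{\smth}(V)$, and then reading off the element of $V^\vee$ representing $v\mapsto\ell\big(\pi(g_1)\pi(f)\pi(g_2)^{-1}v\big)$ and transporting $\pi(g_2)^{-1}$ to the $V^\vee$-side by $X$-invariance of $\bil$ yields $\ell\circ\pi\big(L(g_1)R(g_2)f\big)=\pi^\vee(g_2)\big((\ell\circ\pi(g_1))\circ\pi(f)\big)$; applying $\ell^\vee$ finishes it. For part (4) I would use the two defining relations of a morphism of representations in duality, $\bil_2(\Phi v_1,v_2^\vee)=\bil_1(v_1,\Phi^\vee v_2^\vee)$ and $\pi_2(f)\Phi=\Phi\pi_1(f)$, to check $\Phi^\vee\big(\ell\circ\pi_2(f)\big)=(\ell\circ\Phi)\circ\pi_1(f)$ in $V_1^\vee$ by pairing against an arbitrary $v_1\in V_1$, and then apply $\ell^\vee$.

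Part (5) combines part (2) with a change of variables and Fubini. One first records that $\basis\otimes\basis^\vee=\{u\otimes\tilde v:u,v\in\basis\}$ is an admissible basis of $V\otimes V^\vee$ for the $X\times X$-action, with $\bil\otimes\bil^\circ$-dual basis given by the vectors $\tilde u\otimes v$. Expanding $\Bes_{\data\otimes\data^\circ}^{\bil,\ell^\vee\otimes\ell}(f)$ via \eqref{eq: besOB}, a typical term involves $\bil\big(\pi(x_1)u,\pi^\vee(x_2)\tilde v\big)=\bil\big(\pi(x_2^{-1}x_1)u,\tilde v\big)$; substituting $x_1=x_2x$ and integrating out $x_2$ (legitimate since the sum over $u,v$ is finite and each resulting integrand has compact support) turns the double integral into $\int_X f'(x)\,\bil(\pi(x)u,\tilde v)\,dx=\bil(\pi(f')u,\tilde v)$. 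As $\bil(w,\tilde v)$ is the $v$-coordinate of $w\in V$ with respect to $\basis$, the sum $\sum_v\ell(v)\bil(\pi(f')u,\tilde v)$ collapses to $\ell(\pi(f')u)$, and summing over $u$ and invoking \eqref{eq: besOB} once more gives $\Bes_{\data}^{\ell,\ell^\vee}(f')$.

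None of the steps is deep; the only genuine obstacle is bookkeeping. I expect the care to concentrate on (i) fixing the identification $V\otimes V^\vee\cong\End_{\smth}(V)$, the induced transpose, and the consistent placement of inverses — these are exactly what make parts (1), (3) and (5) go through — and (ii) using the finite-rank property of $\pi(f)$ recorded at the outset to justify that every sum that appears is finite and may be interchanged with the integrals over $X$ or $X\times X$.
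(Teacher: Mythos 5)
Your proposal is correct and follows essentially the same route as the paper: parts (1), (3), (4) are the same unwinding of the definition via the adjoint/intertwining identities, part (2) rests on the identical reduction to the finite-dimensional space $V^K$ using bi-$K$-invariance of $f$ and compatibility of $\basis$ with $K$ (your finite-rank expansion $\pi(f)=\sum_{v\in\basis}(\pi(f)v)\otimes\tilde v$ is only a cosmetic variant of the paper's trace-of-a-rank-one-composition computation), and part (5) is the paper's change-of-variables argument verbatim. No gaps; nothing further is needed.
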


\begin{proof}
Let $s:V\otimes V^\vee\rightarrow V^\vee\otimes V$ be given by $v\otimes v^\vee\mapsto v^\vee\otimes v$.
Then, identifying $V\otimes V^\vee$ with $\End_{\smth}(V)$ and $V^\vee\otimes V$ with $\End_{\smth}(V^\vee)$,
$s$ takes $A\in\End_{\smth}(V)$ to its adjoint. In particular, it takes the operator $\pi(f)$
to $\pi^\vee(f^\vee)$. The first part follows.

For the second part, suppose that $K$ is an open subgroup of $X$ such that $f$ is bi-$K$-invariant
and $\basis$ is compatible with $K$.
The space $V^K$ is finite dimensional and admits $\basis^K:=\basis\cap V^K$ as a basis.
The spaces $V^K$ and $(V^K)^\vee$ are in duality with respect to $\bil$.
Let $u^\vee\in (V^\vee)^K$ be such that $\ell(v)=\bil(v,u^\vee)$ for all $v\in V^K$
and let $u\in V^K$ be such that $\ell^\vee(v^\vee)=\bil(u,v^\vee)$ for all $v^\vee\in (V^\vee)^K$.
Then,
$\Bes_{\data}^{\ell,\ell^\vee}(f)$ is the trace of the composition of $\pi(f)$ on $V^K$
with the rank-one operator $A(v)=\bil(v,u^\vee)u$ on $V^K$.
The adjoint $A^\vee$ of $A$ is given by $A^\vee(v^\vee)=\bil(u,v^\vee)u^\vee$. Thus,
\[
\Bes_{\data}^{\ell,\ell^\vee}(f)=\tr A\pi(f)=\sum_{v\in\basis^K}\bil(\pi(f)v,A^\vee\tilde v)=
\sum_{v\in\basis^K}\ell(\pi(f)v)\ell^\vee(\tilde v).
\]
On the other hand, $\pi(f)v=0$ for all $v\in\basis\setminus\basis^K$ since $\basis$ is compatible with $K$.

The third part is immediate from the definition.

For the fourth part, we have
\[
\Bes_{\data_2}^{\ell,\ell^\vee\circ\Phi^\vee}(f)=
\ell^\vee\circ\Phi^\vee(\ell\circ\pi_2(f))=
\ell^\vee(\ell\circ\pi_2(f)\circ\Phi)=\ell^\vee(\ell\circ\Phi\circ\pi_1(f))=
\Bes_{\data_1}^{\ell\circ\Phi,\ell^\vee}(f).
\]

For the last part we may write using the second part
\[
\Bes_{\data\otimes\data^\circ}^{\bil,\ell^\vee\otimes\ell}(f)=
\sum_{u,v\in\basis}\bil((\pi\otimes\pi^\vee)(f)(u\otimes\tilde v))
(\ell^\vee\otimes\ell)(\tilde u\otimes v).
\]
Note that
\begin{align*}
&\bil((\pi\otimes\pi^\vee)(f)(u\otimes\tilde v))=
\int_{X\times X}f(x,y)\bil((\pi\otimes\pi^\vee)(x,y)(u\otimes\tilde v))\ dx\ dy
\\=&\int_{X\times X}f(x,y)\bil(\pi(x)u,\pi^\vee(y)\tilde v)\ dx\ dy=
\int_X\int_Xf(yx,y)\bil(\pi(y)\pi(x)u,\pi^\vee(y)\tilde v)\ dx\ dy
\\=&\int_X\int_Xf(yx,y)\bil(\pi(x)u,\tilde v)\ dx\ dy=
\int_Xf'(x)\bil(\pi(x)u,\tilde v)\ dx=\bil(\pi(f')u,\tilde v).
\end{align*}
Since
\[
\pi(f')u=\sum_{v\in\basis}\bil(\pi(f')u,\tilde v)v
\]
we get
\[
\Bes_{\data\otimes\data^\circ}^{\bil,\ell^\vee\otimes\ell}(f)=
\sum_{u,v\in\basis}\bil(\pi(f')u,\tilde v)\ell(v)\ell^\vee(\tilde u)=
\sum_{u\in\basis}\ell(\pi(f')u)\ell^\vee(\tilde u)=\Bes_{\data}^{\ell,\ell^\vee}(f')
\]
as required.
\end{proof}

\subsection{}

We now state the main result of this section, which will be proved in the rest of this and the next section.

Let $\pi\in \Irr_{\temp} G'$ and $\ell_H$ be the linear form on $\model_{\psi_N}(\Speh(\pi))$ given by
\begin{align*}
\ell_H(W)=\int_{N_H\bs \mira_H}W(h)\ dh=&\int_{P_H\cap \mira_H\bs \mira_H}\int_{N_H\bs P_H\cap \mira_H}
\modulus_{P_H\cap \mira_H}(q)^{-1}W(qh)\ dq\ dh
\\=&\int_{P_H\cap \mira_H\bs \mira_H}\int_{N'\bs\mira'}\abs{\det m}^nW(\embd(m)h)\ dm\ dh.
\end{align*}
The convergence of the integral will be proved in Lemma \ref{lem: abs conv} below.

\begin{theorem} \label{thm: cmprbes}
Let $\pi\in\Irr_{\temp}(G')$ and $\sigma=\Speh(\pi)$. Then, we have
\[
\Bes_{\data_{\psi_N}(\sigma)}^{\ell_H,\dirac_e}(f)
=\Bes_{\data_{\psi_{N'}}(\pi)}^{\dirac_e,\dirac_e}(\trns f),\ \ f\in\swrz(G)
\]
where $\dirac_e$ is the evaluation at the identity (in the Zelevinsky/Whittaker model)
and $\trns f\in\swrz(G')$ is given by
\begin{equation} \label{def: trns}
\trns f(g)=\abs{\det g}^{\frac{1-n}2}\int_{U_H\bs U}\varphi(u\crnr(g))\ du,\ \ g\in G'
\end{equation}
where $\varphi=\int_Hf(h\cdot)\ dh$.
\end{theorem}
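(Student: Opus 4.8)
The strategy is to combine the two inputs mentioned in the introduction---the morphism $(\tilde\M_\pi,\iota^\vee)$ of \cite{1806.10528}*{Appendix A} and an explicit description of $\ell_H$ as a Jacquet--Rallis type symplectic period on an induced representation---and then to unfold the resulting Bessel distribution down to that of $\pi$. As a preliminary remark, since $\ell_H$ is $H$-invariant, Lemma \ref{lem: beselem}(3) shows that $\Bes_{\data_{\psi_N}(\sigma)}^{\ell_H,\dirac_e}(f)$ depends only on $\varphi=\int_Hf(h\cdot)\,dh\in\swrz(H\bs G)$, a property the right-hand side manifestly shares, so one may work throughout at the level of $\varphi$. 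The first step is to apply the functoriality relation of Lemma \ref{lem: beselem}(\ref{part: inter}) to the morphism $(\tilde\M_\pi,\iota^\vee)\colon I_P(\data_{\psi_{N_M}}(\pi\otimes\pi),\varpi)\to\data_{\psi_N}(\sigma)$, taking $\ell=\ell_H$ and $\ell^\vee$ the functional $\varphi^\vee\mapsto[\varphi^\vee(e)](e)$ on $I_P(\model_{\psi_{N_M}^{-1}}(\tilde\pi\otimes\tilde\pi),\varpi^{-1})$. Since the defining formula for $\iota^\vee$ yields $\ell^\vee\circ\iota^\vee=\dirac_e$, one obtains
\[
\Bes_{\data_{\psi_N}(\sigma)}^{\ell_H,\dirac_e}(f)=\Bes_{I_P(\data_{\psi_{N_M}}(\pi\otimes\pi),\varpi)}^{\ell_H\circ\tilde\M_\pi,\,\ell^\vee}(f),
\]
moving the computation onto the induced representation, where $\ell^\vee$ is the degenerate functional that evaluates at the base point of $P\bs G$ and then applies $\dirac_e\otimes\dirac_e$ on the Whittaker fibre.

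The second step is to make $\ell_H\circ\tilde\M_\pi$ explicit. Substituting the unfolded form of $\ell_H$ (the display preceding the theorem) and the defining $U$-integral of $\tilde\M_\pi$, and using the factorisation $w_U^{-1}u\,\embd(m)h=\embd(m^*)\,\bar u'\,w_U^{-1}h$ with $\bar u'\in\bar U=w_U^{-1}Uw_U$, the left $M$-equivariance of elements of $I_P(\cdot,\varpi)$, and the elementary identities $\det m^*=(\det m)^{-1}$, $\modulus_P(\embd(m^*))=\abs{\det m}^{2n}$, $\varpi(\embd(m^*))=\abs{\det m}$, one rewrites $\ell_H\circ\tilde\M_\pi$---after a change of variables in $U$---as an absolutely convergent iterated integral, over a suitable quotient of $H$, of an $M_H$-invariant contraction on $\model_{\psi_{N_M}}(\pi\otimes\pi)$ built from $\bil_{\mira'}$ and the map $W\mapsto W^*$. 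This is the local analogue of \cite{MR2058616}*{Theorem 2}, and identifies $\ell_H\circ\tilde\M_\pi$ with a fixed multiple of the symplectic period functional on $I_P(\pi\otimes\pi,\varpi)$ that is the local counterpart of the one in \cite{MR1142486}. Convergence of this and the integrals below is supplied by Lemma \ref{lem: abs conv}.

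The third step is to unfold the Bessel distribution on the induced representation. Using the integral kernel of $I_P(f)$ and the admissible-basis formula Lemma \ref{lem: beselem}(2), the functional $\ell^\vee$ localises the $P\bs G$-variable at the base point, while the period description from the second step introduces integration over $H$---hence, after intersecting with $P$, over $M_H\cong G'$---together with the integral over $U_H\bs U$ produced by the $U$-integral of $\tilde\M_\pi$ against $U\cap H=U_H$. Performing an Iwasawa decomposition on $G$, collecting the $H$-integration into $\varphi=\int_Hf(h\cdot)\,dh$, and bookkeeping the characters $\modulus_P$, $\varpi$ and the Jacobians of the unipotent integrations, one recognises the resulting descent of $f$ to $M=G'\times G'$---via the isomorphism $W\mapsto W^*$ on one tensor factor---as a function $F$ with $F'(x):=\int_{G'}F(yx,y)\,dy=\trns f(x)$, the normalising power $\abs{\det g}^{\frac{1-n}2}$ of \eqref{def: trns} arising exactly from this bookkeeping. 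What remains is $\Bes_{\data_{\psi_{N'}}(\pi)\otimes\data_{\psi_{N'}}(\pi)^\circ}^{\bil_{\mira'},\dirac_e\otimes\dirac_e}(F)$, and Lemma \ref{lem: beselem}(\ref{part: convo}) identifies this with $\Bes_{\data_{\psi_{N'}}(\pi)}^{\dirac_e,\dirac_e}(\trns f)$, which is the assertion.

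The main obstacle is the interplay of the second and third steps: first, producing the symplectic period functional in a form amenable to unfolding (the local version of the global computation of \cite{MR2058616}), and second, tracking every modulus character and every unipotent Jacobian through the Iwasawa decomposition so that the descended test function is precisely $\trns f$, including the power $\abs{\det g}^{\frac{1-n}2}$ and with no stray constant beyond the fixed normalisations of Haar measures. The combinatorics of the nested unipotent subgroups $U\supset U_H$, $N\supset N_H\supset\embd(N')$ and $\bar U=w_U^{-1}Uw_U$, and the fact that the relevant period becomes diagonal on $M=G'\times G'$ only after the $W\mapsto W^*$ twist, is where the real work lies.
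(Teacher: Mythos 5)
Your overall architecture agrees with the paper's: step one (transport along the morphism $(\tilde\M_\pi,\iota^\vee)$ via Lemma \ref{lem: beselem} part \ref{part: inter}) and step three (unfolding the induced Bessel distribution through the Iwasawa decomposition $H=P_H\K_H$ and Lemma \ref{lem: beselem} part \ref{part: convo}, with the modulus bookkeeping producing $\trns f$) correspond to the paper's two lemmas computing $f^\sharp$ and $\Conv(f^\sharp)=\trns f$. The genuine gap is your second step, namely the identity $\ell_H\circ\tilde\M_\pi=\ell_H^{\ind}$ (Proposition \ref{prop: mainper}), which is the heart of the theorem. You propose to prove it by inserting the unfolded form of $\ell_H$ and the $U$-integral defining $\tilde\M_\pi$, factoring $w_U^{-1}u\,\embd(m)h$ and interchanging the integrations. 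This is precisely the heuristic computation displayed in the paper (the comparison of $I_1$ and $I_2$), and the paper explicitly states that it cannot justify it: the combined double integral over $U$ (or $\bar U_D$) and $N_H\bs\mira_H$ is not known to converge absolutely, so Fubini and your change of variables are unavailable. Lemma \ref{lem: abs conv}, which you invoke for convergence, only gives absolute convergence of the single integral $\int_{N_H\bs\mira_H}\abs{W(h)}\,dh$ for $W\in\reltempw_{\varpi^{-1}}(N\bs G;\psi_N)$; it says nothing about the joint integral created once the intertwining operator is opened up. Your fallback of identifying $\ell_H\circ\tilde\M_\pi$ with $\ell_H^{\ind}$ only ``up to a fixed multiple'' does not rescue the argument either: the theorem needs the constant to be exactly $1$, and even mere proportionality presupposes the $H$-invariance of $\ell_H$ (or a multiplicity-one statement for $\mira_H$-invariant functionals, which is an expected but unproved analogue of Bernstein's theorem); in the paper the $H$-invariance of $\ell_H$ is itself an output of the argument (Corollary \ref{cor: modeltrans}), not an input, so assuming it at the outset is circular.

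The paper closes this gap by a different mechanism, which your proposal does not contain: in \S\ref{sec: proofmain} it constructs model transitions between the symplectic and Zelevinsky models using the regularized integral $\int^{\reg}_{N_H\bs N}$, shows $\model_H(\sigma)\subseteq\reltemp(H\bs G)$ and $\trans_H^{(N,\psi_N)}\circ\transs=\tilde\M_\pi$ (Lemma \ref{lem: intM}), and then proves the inversion formula of Proposition \ref{prop: modeltrans} by a stepwise argument over the Heisenberg subgroups $(V_k)_H$ and the tori $\tilde T_k$, resting on Fourier inversion in the spirit of \cite{MR3431601}. This yields $\transs=\trans_{(N,\psi_N)}^H\circ\tilde\M_\pi$, hence $\ell_H^{\ind}=\ell_H\circ\tilde\M_\pi$ with the exact constant, together with the $(\tilde H,\omega_\pi\circ\tilde\lambda)$-equivariance of $\ell_H$. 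Without a replacement for this regularization/inversion step, your proof of the theorem does not go through.
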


\subsection{}
It follows from \eqref{eq: mod} that for any $M_H$-invariant linear form $\lambda$ on $\pi\otimes\pi$, the integral
\[
\int_{P_H\bs H}\lambda(\varphi(h))\ dh
\]
defines an $H$-invariant linear form on $I_P(\pi\otimes\pi,\varpi)$.
By \cite{MR1078382}*{Theorem 2.4.2} and the proof of \cite{MR2248833}*{Proposition 6},
this construction gives rise to a linear isomorphism of one-dimensional vector spaces
\[
\Hom_{M_H}(\pi\otimes\pi,\triv)\simeq\Hom_H(I_P(\pi\otimes\pi,\varpi),\triv)=\Hom_H(\Speh(\pi),\triv).
\]

By using \cite{MR748505} once again, a concrete $M_H$-invariant linear form on $\model_{\psi_{N_M}}(\pi\otimes\pi)$ is given by
\[
\ell_{M_H}(W)=\int_{N'\bs\mira'} W(\embd(p))\ dp.
\]
Hence, the linear form
\begin{equation} \label{def: ellHind}
\ell_H^{\ind}(\varphi)=\int_{P_H\bs H} \ell_{M_H}(\varphi(h))\ dh,\ \ \varphi\in I_P(\model_{\psi_{N_M}}(\pi\otimes\pi),\varpi)
\end{equation}
is well-defined and $H$-invariant.

Let
\[
\ell_{M_H}^*(W)=\ell_{M_H}(W^{w_U}),\ \ \ W\in \model_{\psi_{N_M}}(\pi\otimes\pi)
\]
where $W^{w_U}(x)=W(w_Uxw_U^{-1})$.
Then, $\ell_{M_H}^*$ is also $M_H$-invariant, and in fact $\ell_{M_H}^*\equiv\ell_{M_H}$
since they must differ by a sign, and they are positive on $W$ of the form
$W(\sm{g_1}{}{}{g_2})=W_1(g_1)\overline{W_1(g_2^*)}$ where $0\not\equiv W_1\in\model_{\psi_{N'}}(\pi)$.

We denote by $\model_H(\sigma)$ (the \emph{symplectic model} of $\sigma=\Speh(\pi)$) the image of the map
\[
\transs:I_P(\model_{\psi_{N_M}}(\pi\otimes\pi),\varpi)\rightarrow C^\infty(H\bs G)
\]
(which factors through $\sigma$) induced from $\ell_H^{\ind}$ by Frobenius reciprocity, i.e.,
\begin{equation} \label{def: transs}
\transs\varphi(g)=\ell_H^{\ind}(I_P(g,\varpi)\varphi).
\end{equation}

\begin{remark} \label{rem: symp}
Recall that $\tilde H$ is the symplectic similitude group and $\tilde\lambda$ is the similitude character of $\tilde H$.
Then, for any $\varphi\in I_P(\model_{\psi_{N_M}}(\pi\otimes\pi),\varpi)$ we have
\[
\ell_H^{\ind}(\varphi)=\int_{P_{\tilde H}\bs\tilde H} \ell_{M_H}(\varphi(\tilde h))
\omega_\pi(\tilde\lambda(\tilde h))^{-1}\ d\tilde h,
\]
and hence, $\ell_H^{\ind}$ is $(\tilde H,\omega_\pi\circ\tilde\lambda)$-equivariant.
Indeed, since $P_H\bs H\simeq P_{\tilde H}\bs\tilde H$, it is enough to check that the integral
on the right-hand side is well-defined, i.e., that the integrand has the correct equivariance property.
This follows from \eqref{eq: mod} and the fact that $P_{\tilde H}=\tilde\lambda^\vee(F^*)P_H$
and $\tilde\lambda(\tilde\lambda^\vee(a))=a$.
\end{remark}

\begin{proposition} \label{prop: mainper}
We have
\begin{equation} \label{eq: LHellH}
\ell_H^{\ind}=\ell_H\circ\tilde\M_{\pi}.
\end{equation}
\end{proposition}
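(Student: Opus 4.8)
The plan is to reduce the identity to the determination of a single scalar, using the one-dimensionality of $\Hom_H(I_P(\pi\otimes\pi,\varpi),\triv)$ recorded above. Under the isomorphism $\Hom_{M_H}(\pi\otimes\pi,\triv)\simeq\Hom_H(I_P(\pi\otimes\pi,\varpi),\triv)$ the functional $\ell_H^{\ind}$ corresponds to $\ell_{M_H}$, which is non-zero by Bernstein \cite{MR748505}; hence $\ell_H^{\ind}$ spans that space. On the other hand $\ell_H\circ\tilde\M_\pi$ is $H$-invariant, since $\tilde\M_\pi$ is a morphism of $G$-representations in duality (\cite{1806.10528}) and $\ell_H$ is $H$-invariant on $\model_{\psi_N}(\sigma)$. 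Therefore $\ell_H\circ\tilde\M_\pi=c\cdot\ell_H^{\ind}$ for a unique scalar $c=c(\pi)\in\C$, and the content of the proposition is that $c=1$; in particular this will also show $\ell_H\not\equiv0$.

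To compute $c$ I would unfold $\ell_H\circ\tilde\M_\pi$. Beginning from $\ell_H(\tilde\M_\pi\varphi)=\int_{N_H\bs\mira_H}\int_U\varphi(w_U^{-1}uh)(e)\,du\,dh$ and inserting the expansion of $\ell_H$ along the chain $N_H\subset P_H\cap\mira_H\subset\mira_H$ recalled just before Theorem \ref{thm: cmprbes}, the aim is to merge the $U$-integral carried by $\tilde\M_\pi$ with the $\mira_H$-integral of $\ell_H$ into the single integral $\int_{P_H\bs H}\ell_{M_H}(\varphi(h))\,dh=\ell_H^{\ind}(\varphi)$. The tools are: the transformation law of sections in $I_P(\model_{\psi_{N_M}}(\pi\otimes\pi),\varpi)$ together with the modulus relation \eqref{eq: mod}, to keep the powers of $\abs{\det}$ in balance; the conjugation identity $w_U^{-1}\embd(m)w_U=\embd(m^*)$, which is immediate from the definition of $\embd$ since $(m^*)^*=m$ and $w_U=\sm{}{I_n}{-I_n}{}\in H$; the identity $\ell_{M_H}^*\equiv\ell_{M_H}$, which absorbs the $w_U$-conjugations produced along the way; and — the essential geometric input — a Bruhat/Iwasawa-type decomposition of $H$ relative to its Siegel parabolic $P_H$, identifying the product of $U_H\bs U$ with $P_H\cap\mira_H\bs\mira_H$ with a set of representatives of full measure for $P_H\bs H$. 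Reading off the resulting constant gives $c=1$. (Alternatively one may factor $\tilde\M_\pi$ through $\M_\pi$ and split the claim in two: that on $\sigma\hookrightarrow I_P(\model_{\psi_{N_M}}(\pi\otimes\pi),\varpi^{-1})$ the functional $W\mapsto\ell_H(W(\cdot)(e))$ coincides with $\int_{P_H\bs H}\ell_{M_H}(W(h))\,dh$, and that $\M_\pi$ intertwines this last functional with $\ell_H^{\ind}$; each piece is then handled by the same uniqueness-plus-constant scheme.)

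The main obstacle is convergence: while the two basic integrals are unproblematic — the one defining $\M_\pi$ is absolutely convergent by \cite{1806.10528}, and the one defining $\ell_H$ by Lemma \ref{lem: abs conv} — the iterated integrals appearing in the middle of the unfolding need not converge absolutely, so the order changes and substitutions must be justified. I would circumvent this by first proving $c=1$ on the dense subspace of sections $\varphi$ supported on the open cell $Pw_U^{-1}U$: for such $\varphi$ the operator $\M_\pi$ is given by integration over a compact set, and $\ell_H(\tilde\M_\pi\varphi)$ and $\ell_H^{\ind}(\varphi)$ likewise reduce to integrals over compact quotients, so that the chain of manipulations of the previous paragraph becomes a bare change of variables and yields $c=1$ on the nose; the proportionality established in the first paragraph then propagates $c=1$ to all of $I_P(\model_{\psi_{N_M}}(\pi\otimes\pi),\varpi)$. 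As a consistency check, in the unramified case one may evaluate both sides on the normalized spherical vector and compare with the explicit formula of \cite{MR944325}.
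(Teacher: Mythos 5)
Your overall scheme --- multiplicity one for $\Hom_H(I_P(\pi\otimes\pi,\varpi),\triv)$ plus an unfolding to pin down the proportionality constant --- is exactly the heuristic the paper itself sketches right after stating Proposition \ref{prop: mainper}, and the two points your write-up glosses over are precisely the points the authors could not make rigorous, which is why they prove the proposition by a different route. The most serious gap is your assertion that $\ell_H$ is $H$-invariant on $\model_{\psi_N}(\sigma)$. Nothing available at this stage gives that: the defining integral over $N_H\bs\mira_H$ is only visibly $\mira_H$-invariant (cf.\ Lemma \ref{lem: abs conv}), and upgrading $\mira_H$-invariance to $H$-invariance would require the analogue for $H\bs G$ of Bernstein's theorem \cite{MR748505}, which the paper explicitly states is open. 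In the paper, the $H$-invariance of $\ell_H$ is a \emph{consequence} of the proof of Proposition \ref{prop: mainper} (Corollary \ref{cor: modeltrans}), not an input; so without an independent argument your first paragraph is circular and the proportionality $\ell_H\circ\tilde\M_\pi=c\,\ell_H^{\ind}$ is not established.

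Second, even granting proportionality, the evaluation of the constant is the step the authors say they are ``unable to justify \dots even for special $\varphi$'': the intermediate iterated integrals are not known to converge as double integrals, so the interchanges and substitutions are unjustified. Your proposed remedy --- sections supported on the open cell $Pw_U^{-1}U$ --- does not reduce matters to a bare change of variables: for such $\varphi$ the $u$-support of $u\mapsto\varphi(w_U^{-1}uh)$ depends on $h$ and is not uniformly bounded as $h$ runs over the non-compact part $N'\bs\mira'$ of $N_H\bs\mira_H$, so Fubini is not automatic; moreover, the ``Bruhat/Iwasawa-type'' identification you invoke of $U_H\bs U\times\bigl((P_H\cap\mira_H)\bs\mira_H\bigr)$ with a full-measure set of representatives of $P_H\bs H$ cannot hold, already because the dimensions are $n(n-1)/2+n(n-1)/2$ versus $n(n+1)/2$. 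The paper circumvents both problems by proving the identity directly: Lemma \ref{lem: intM} shows that a regularized $(N,\psi_N)$-integral applied to $\transs\varphi$ recovers $\tilde\M_\pi\varphi$, and the inversion formula of Proposition \ref{prop: modeltrans} (an inductive Fourier-inversion/compact-support argument over the Heisenberg subgroups $V_k$, in the style of \cite{MR3431601}) shows that integration over $N_H\bs\mira_H$ inverts this transition; Corollary \ref{cor: modeltrans} then delivers \eqref{eq: LHellH} and, simultaneously, the $H$-invariance (indeed $(\tilde H,\omega_\pi\circ\tilde\lambda)$-equivariance) of $\ell_H$. To salvage your approach you would need an independent proof of the $H$-invariance of $\ell_H$ together with an honest convergence argument for the unfolding --- at which point you would essentially be reproving Proposition \ref{prop: modeltrans}.
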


We will prove the proposition in \S\ref{sec: modelfin} after some preparation.

For now, we give a \emph{heuristic} argument for the validity of Proposition \ref{prop: mainper}
in the spirit of the (rigorous) argument of \cite{1806.10528}*{Appendix A}.
For this argument we \emph{assume} that every $\mira_H$-invariant distribution on $H\bs G$ is $H$-invariant.
This is an expected (but as yet, unproved) analogue of \cite{MR748505}.
It would imply that for any $H$-distinguished irreducible representation of $G$, every $\mira_H$-invariant functional is $H$-invariant.
In particular, $\ell_H$ is $H$-invariant.
Since there is a unique $H$-invariant functional on $I_P(\pi\otimes\pi,\varpi)$ up to a scalar,
we conclude that $\ell_H^{\ind}$ and $\ell_H\circ \tilde\M_{\pi}$ are proportional.
To determine the proportionality constant we compare the integrals
\[
I_1=\int_{\bar U_H\bs\bar U}\ell_H^{\ind}(I_P(u,\varpi)\varphi)\ du\ \ \text{ and }\ \
I_2=\int_{\bar U_H\bs\bar U}\ell_H(\tilde\M_\pi(I_P(u,\varpi)\varphi))\ du
\]
where $\bar X$ denotes the image of $X$ under transpose.

On the one hand,
\[
I_1=\int_{\bar U_H\bs\bar U}\int_{\bar U_H}\ell_{M_H}(\varphi(uv))\ du\ dv=
\int_{\bar U}\ell_{M_H}(\varphi(u))\ du=
\ell_{M_H}^*(\M_\pi\varphi(w_U)).
\]
On the other hand, observe that the embedding $\bar U_{H_{n-1}}\hookrightarrow \bar U_H$ induces an isomorphism
of abelian groups
\[
\bar U_{H_{n-1}}\bs\bar U_D\simeq \bar U_H\bs\bar U.
\]
Arguing formally, we have
\begin{align*}
I_2=&\int_{\bar U_{H_{n-1}}\bs\bar U_D}\ell_H(\tilde\M_\pi(I_P(u,\varpi)\varphi))\ du=
\int_{\bar U_{H_{n-1}}\bs\bar U_D}\int_{N_H\bs \mira_H}(\tilde\M_\pi\varphi)(hu)\ dh\ du\\&=
\int_{\bar U_{H_{n-1}}\bs\bar U_D}\int_{\bar U_{H_{n-1}}}\int_{N'\bs\mira'}
(\tilde\M_\pi\varphi)(\embd(m)vu)\abs{\det m}^n\ dm\ dv\ du
\\=&\int_{\bar U_D}\int_{N'\bs\mira'}(\tilde\M_\pi\varphi)(\embd(m)u)\abs{\det m}^n\ dm\ du
\\=&\int_{N'\bs\mira'}\int_{\bar U_D}(\tilde\M_\pi\varphi)(\embd(m)u)\ du\ \abs{\det m}^n\ dm
\\=&\int_{N'\bs\mira'}\int_{\bar U_D}(\tilde\M_\pi\varphi)(u\embd(m))\ du\ \abs{\det m}^{1-n}\ dm.
\end{align*}
By \cite{1806.10528}*{Lemma A.2}, this is equal to
\[
\int_{N'\bs\mira'}(\tilde\M_\pi\varphi)(w_U\embd(m))\abs{\det m}^{1-n}\ dm
=\int_{N'\bs\mira'}\M_\pi\varphi(w_U)(\embd(m^*))\ dm
=\ell_{M_H}^*(\M_\pi\varphi(w_U)).
\]
We would conclude that $I_1=I_2$ which would imply \eqref{eq: LHellH}.

Unfortunately, we are unable to justify the computation above for $I_2$ (even for special $\varphi$)
since we do not know whether the integrals above converge as double integrals.
Instead, we will prove Proposition \ref{prop: mainper} in a different way using the argument of \cite{MR3431601},
taking into account a simplification suggested by Rapha\"{e}l Beuzart-Plessis.

\subsection{}
Admitting Proposition \ref{prop: mainper}, we infer from Lemma \ref{lem: beselem} part \ref{part: inter},
applied to the morphism $(\tilde\M_\pi,\iota^\vee)$, that
\[
\Bes_{\data_{\psi_N}(\sigma)}^{\ell_H,\dirac_e}(f)=
\Bes_{I_P(\data_{\psi_{N_M}}(\pi\otimes\pi),\varpi)}^{\ell_H^{\ind},\dirac_{e;e}}(f)
\]
where $\dirac_{e;e}(\varphi)=\varphi(e)(e)$.

Note that $\K_H=\K\cap H$ is a maximal compact subgroup of $H$.
We endow $\K_H$ with the Haar measure induced by the Iwasawa decomposition $H=P_H\K_H$.
(It is not a probability measure.)
Our next claim is

\begin{lemma}
Let $f^\sharp\in\swrz(M)$ be given by
\[
f^\sharp(m)=\modulus_P(m)^{-\frac12}\varpi(m)\int_{\K_H}\int_Uf(kum)\ du\ dk=
\modulus_P(m)^{\frac12}\varpi(m)\int_{\K_H}\int_Uf(kmu)\ du\ dk
\]
for any $m\in M$. Then,
\[
\Bes_{I_P(\data_{\psi_{N_M}}(\pi\otimes\pi),\varpi)}^{\ell_H^{\ind},\dirac_{e;e}}(f)=
\Bes_{\data_{\psi_{N_M}}(\pi\otimes\pi)}^{\ell_{M_H},\dirac_e}(f^\sharp).
\]
\end{lemma}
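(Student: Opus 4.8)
The plan is to reduce the induced Bessel distribution on $I_P(\data_{\psi_{N_M}}(\pi\otimes\pi),\varpi)$ to the one on $\data_{\psi_{N_M}}(\pi\otimes\pi)$ by expanding it against a geometric basis of the induced space adapted to the Iwasawa decomposition, with $f^\sharp$ emerging as the natural constant term of $f$. Write $V=\model_{\psi_{N_M}}(\pi\otimes\pi)$ and $V^\vee=\model_{\psi_{N_M}^{-1}}(\tilde\pi\otimes\tilde\pi)$, let $\tau$ be the action of $M$ on $V$, and let $\operatorname{ev}\colon I_P(V,\varpi)\to V$ be evaluation at $e$.

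First I would strip off the $H$-integration. Using the Iwasawa decomposition $H=P_H\K_H$, \eqref{eq: mod}, the $M_H$-invariance of $\ell_{M_H}$, and the normalization of the Haar measure on $\K_H$, one gets $\ell_H^{\ind}(\varphi)=\int_{\K_H}\ell_{M_H}(\varphi(k))\,dk$. Since $I_P(k,\varpi)I_P(f,\varpi)=I_P(L(k)f,\varpi)$ and $\varphi(g)=(I_P(g,\varpi)\varphi)(e)$, moving $\ell_{M_H}\circ\operatorname{ev}$ past the (finite) $\K_H$-integral yields $\ell_H^{\ind}\circ I_P(f,\varpi)=(\ell_{M_H}\circ\operatorname{ev})\circ I_P(f_1,\varpi)$ with $f_1:=\int_{\K_H}L(k)f\,dk\in\swrz(G)$, hence $\Bes_{I_P(\data_{\psi_{N_M}}(\pi\otimes\pi),\varpi)}^{\ell_H^{\ind},\dirac_{e;e}}(f)=\Bes_{I_P(\data_{\psi_{N_M}}(\pi\otimes\pi),\varpi)}^{\ell_{M_H}\circ\operatorname{ev},\dirac_{e;e}}(f_1)$. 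As $f_1(mu)=\int_{\K_H}f(kmu)\,dk$, the second displayed formula for $f^\sharp$ reads $f^\sharp(m)=\modulus_P(m)^{\frac12}\varpi(m)\int_Uf_1(mu)\,du$. Setting $\phi^\flat(m):=\modulus_P(m)^{\frac12}\varpi(m)\int_U\phi(mu)\,du$ for $\phi\in\swrz(G)$ (the two formulas for $f^\sharp$ agreeing by conjugating $U$ past $M$), we have $f^\sharp=(f_1)^\flat$, so it suffices to prove
\[
\Bes_{I_P(\data_{\psi_{N_M}}(\pi\otimes\pi),\varpi)}^{\ell_{M_H}\circ\operatorname{ev},\dirac_{e;e}}(\phi)=\Bes_{\data_{\psi_{N_M}}(\pi\otimes\pi)}^{\ell_{M_H},\dirac_e}(\phi^\flat)
\]
for every $\phi\in\swrz(G)$, and then take $\phi=f_1$.

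For this I would apply \eqref{eq: besOB}. Fix a compact open normal subgroup $\K_1$ of $\K$ with respect to which $\phi$ (hence, after shrinking, $\phi^\flat$) is bi-invariant, and choose an admissible basis of $I_P(V,\varpi)$ that is compatible with $\K_1$ and adapted to the pairwise disjoint open cells $Pc\K_1$, $c$ running over the finite set $(P\cap\K)\bs\K/\K_1$ — i.e.\ each basis vector is supported on a single cell. Because $\bil_{P\bs G}$ pairs functions supported on distinct cells to zero, the dual basis is adapted to the same decomposition. In the resulting expansion $\Bes_{I_P(\data_{\psi_{N_M}}(\pi\otimes\pi),\varpi)}^{\ell_{M_H}\circ\operatorname{ev},\dirac_{e;e}}(\phi)=\sum_\varphi(\ell_{M_H}\circ\operatorname{ev})(I_P(\phi,\varpi)\varphi)\,\widetilde\varphi(e)(e)$, only the $\varphi$ that are $\K_1$-fixed (by the finiteness in \eqref{eq: besOB}, since $\phi$ is bi-$\K_1$-invariant) and supported on the open cell $P\K_1$ (so that $\widetilde\varphi(e)\ne0$) survive; such $\varphi$ are right-$\K_1$-invariant with $\varphi(e)=v$ ranging over a basis of $V^{M\cap\K_1}$, and $\varphi(mu\kappa)=\modulus_P(m)^{\frac12}\varpi(m)\tau(m)v$ for $\kappa\in\K_1$.

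For such a $\varphi$ one has $(\ell_{M_H}\circ\operatorname{ev})(I_P(\phi,\varpi)\varphi)=\int_G\phi(g)\ell_{M_H}(\varphi(g))\,dg$, and unfolding over $G=P\K$ using the right-$\K_1$-invariance of $\phi$ turns this into $C_1\int_M\phi^\flat(m)\ell_{M_H}(\tau(m)v)\,dm=C_1\,\ell_{M_H}(\tau(\phi^\flat)v)$, where $C_1$ is an appropriate volume (of the big cell) and the factor $\modulus_P^{\frac12}\varpi$ produced here is precisely the one in \eqref{eq: rtrns} and in $\phi^\flat$. Dually, pairing the big-cell basis vectors against their duals via $\bil_{P\bs G}$ shows $\widetilde\varphi(e)$ is $C_1^{-1}$ times the $\bil$-dual of $\varphi(e)$ in $(V^\vee)^{M\cap\K_1}$, with the same constant $C_1$, so the two constants cancel; extending the basis of $V^{M\cap\K_1}$ to an admissible basis of $V$ and noting that $\tau(\phi^\flat)$ kills its members outside $V^{M\cap\K_1}$, \eqref{eq: besOB} on the $M$-side identifies the surviving sum with $\Bes_{\data_{\psi_{N_M}}(\pi\otimes\pi)}^{\ell_{M_H},\dirac_e}(\phi^\flat)$. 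Taking $\phi=f_1$ finishes the proof. The part that needs care is the Haar measure bookkeeping rather than anything conceptual: one must check that the volume $C_1$ arising from Iwasawa integration over the big cell of $G$ is exactly the constant that appears in normalizing the dual basis through $\bil_{P\bs G}$ over the big cell of $P\bs G$, which rests on the compatibility of all the measure conventions (in particular the measure on $\K_H$ defined via the Iwasawa decomposition of $H$); and one must route the argument through the admissible basis — equivalently, work one $\K_1$-level at a time — because $\ell_{M_H}$ is not a smooth functional and hence is not represented by a single section of $I_P(V^\vee,\varpi^{-1})$.
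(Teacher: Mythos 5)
Your proposal is correct, but its core mechanism differs from the paper's. The opening reduction is shared: the paper also uses the Iwasawa decomposition $H=P_H\K_H$ to absorb the $P_H\backslash H$-integration into $f$ (this is exactly where $\int_{\K_H}$ enters $f^\sharp$). From there, however, the paper never chooses a basis: following \cite{MR2058616}*{Lemma 2}, it writes down the section $\Psi\in I_P(\model_{\psi_{N_M}^{-1}}(\tilde\pi\otimes\tilde\pi),\varpi^{-1})$ characterized by $\bil_{\mira'\times\mira'}(W,\Psi(g))=\ell_{M_H}((\pi\otimes\pi)((R(g)f)^\sharp)W)$ and verifies in a single computation, using $\int_{P_H\backslash H}\int_G=\int_{\K_H}\int_{P\backslash G}\int_M\int_U$, that $\ell_H^{\ind}(I_P(f,\varpi)\varphi)=\bil_{P\backslash G}(\varphi,\Psi)$; the lemma is then immediate from the definition of the Bessel distribution, since $\Bes_{I_P(\data_{\psi_{N_M}}(\pi\otimes\pi),\varpi)}^{\ell_H^{\ind},\dirac_{e;e}}(f)=\dirac_{e;e}(\Psi)=\dirac_e(\Psi(e))=\Bes_{\data_{\psi_{N_M}}(\pi\otimes\pi)}^{\ell_{M_H},\dirac_e}(f^\sharp)$. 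You instead prove the intermediate identity $\Bes^{\lambda_0,\dirac_{e;e}}(\phi)=\Bes^{\ell_{M_H},\dirac_e}(\phi^\flat)$, where $\lambda_0(\varphi)=\ell_{M_H}(\varphi(e))$ and $\phi^\flat(m)=\modulus_P(m)^{\frac12}\varpi(m)\int_U\phi(mu)\,du$ (which is indeed true for every $\phi\in\swrz(G)$), by expanding via \eqref{eq: besOB} in an admissible basis adapted to the $P$--$\K_1$ cells, retaining only the $\K_1$-fixed vectors supported on $P\K_1$, unfolding each term and matching the unfolding volume against the dual-basis normalization. This can be pushed through, but the two points you flag are genuine obligations: identifying the $\K_1$-fixed big-cell sections with $V^{M\cap\K_1}$ uses an Iwahori factorization of $\K_1$ with respect to $P$, and the equality of your two constants is precisely the compatibility of $dg$, $dm$, $du$ and the normalization of $\bil_{P\backslash G}$ expressed by $\int_G=\int_{P\backslash G}\int_M\int_U$ --- the very formula the paper applies once, globally. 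Indeed, granting that formula you can compute directly the element of $I_P(\model_{\psi_{N_M}^{-1}}(\tilde\pi\otimes\tilde\pi),\varpi^{-1})$ representing $\varphi\mapsto\ell_{M_H}((I_P(\phi,\varpi)\varphi)(e))$ and dispense with the basis, the cells and the constants altogether. What your route buys is an elementary, level-by-level, finite-dimensional verification; what the paper's buys is brevity and minimal measure bookkeeping.
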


\begin{proof}
For any $g\in G$ let $\Psi(g)\in\model_{\psi_{N_M}^{-1}}(\tilde\pi\otimes\tilde\pi)$ be such that
\[
\bil_{\mira'\times\mira'}(W,\Psi(g))=\ell_{M_H}((\pi\otimes\pi)((R(g)f)^\sharp)W)\ \ \forall W\in\model_{\psi_{N_M}}(\pi\otimes\pi).
\]
Clearly, $\Psi\in I_P(\model_{\psi_{N_M}^{-1}}(\tilde\pi\otimes\tilde\pi),\varpi^{-1})$.
As in \cite{MR2058616}*{Lemma 2}, we observe that
\[
\ell_H^{\ind}(I_P(f,\varpi)\varphi)=\bil_{P\bs G}(\varphi,\Psi).
\]
Indeed,
\begin{align*}
&\ell_H^{\ind}(I_P(f,\varpi)\varphi)=\int_{P_H\bs H}\int_Gf(g)\ell_{M_H}(\varphi(xg))\ dg\ dx
=\int_{P_H\bs H}\int_Gf(x^{-1}g)\ell_{M_H}(\varphi(g))\ dg\ dx
\\=&\int_{\K_H}\int_{P\bs G}\int_M\int_U\modulus_P(m)^{-\frac12}\varpi(m)f(k^{-1}umg)\ell_{M_H}(R(m)\varphi(g))\ du\ dm\ dg\ dk
\\=&\int_{P\bs G}\ell_{M_H}((\pi\otimes\pi)((R(g)f)^\sharp)\varphi(g))\ dg
=\int_{P\bs G}\bil_{\mira'\times\mira'}(\varphi(g),\Psi(g))\ dg=\bil_{P\bs G}(\varphi,\Psi).
\end{align*}
Thus,
\[
\Bes_{I_P(\data_{\psi_{N_M}}(\pi\otimes\pi),\varpi)}^{\ell_H^{\ind},\dirac_{e;e}}(f)=
\dirac_{e;e}(\Psi)=\dirac_e(\Psi(e))=
\Bes_{\data_{\psi_{N_M}}(\pi\otimes\pi)}^{\ell_{M_H},\dirac_e}(f^\sharp)
\]
as claimed.
\end{proof}

\subsection{}
Finally,
\begin{lemma}
For any $f\in\swrz(M)$ we have
\[
\Bes_{\data_{\psi_{N_M}}(\pi\otimes\pi)}^{\ell_{M_H},\dirac_e}(f)
=\Bes_{\data_{\psi_{N'}}(\pi)}^{\dirac_e,\dirac_e}(\Conv(f))
\]
where $\Conv(f)\in\swrz(G')$ is given by
\[
\Conv(f)(g)=\int_{M_H}f(m\crnr(g))\ dm,\ \ g\in G'.
\]
\end{lemma}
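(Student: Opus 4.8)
The plan is to use the $M_H$-invariance of $\ell_{M_H}$ to collapse the integration over $M$ defining the left-hand side into the partial integration $\Conv$ over $M_H$, thereby reducing the Bessel distribution for $\pi\otimes\pi$ to one for $\pi$. The step with content is the identity
\[
\ell_{M_H}\bigl((\pi\otimes\pi)(f)(A\otimes B)\bigr)=\bil_{\mira'}\bigl(B,(\pi(\Conv(f))A)^*\bigr),\qquad A,B\in\model_{\psi_{N'}}(\pi).
\]
To prove it I would write $M=M_H\,\crnr(G')$: every element of $M$ is uniquely $\embd(q)\crnr(g)=\sm{q^*g}{}{}{q}$ with $q,g\in G'$, and the Haar measure of $M$ becomes $dq\,dg$ in these coordinates (the measure on $M_H$ being transported from $G'$ via $\embd$, compatibly with the conventions above). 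Since $\ell_{M_H}\circ(\pi\otimes\pi)(\embd(q))=\ell_{M_H}$ and $(\pi\otimes\pi)(\crnr(g))=\pi(g)\otimes\id$, this yields
\[
\ell_{M_H}\bigl((\pi\otimes\pi)(f)(A\otimes B)\bigr)=\int_{G'}\Conv(f)(g)\,\ell_{M_H}\bigl((\pi(g)A)\otimes B\bigr)\,dg,
\]
and then, using $\ell_{M_H}(C\otimes D)=\int_{N'\bs\mira'}C(p^*)D(p)\,dp=\bil_{\mira'}(D,C^*)$ together with $\int_{G'}\Conv(f)(g)(\pi(g)A)^*\,dg=(\pi(\Conv(f))A)^*$ (a finite sum, since $g\mapsto(\pi(g)A)^*$ takes finitely many values on the support of $\Conv f$), I obtain the displayed identity.

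Granting this, I would set $\Theta:=\ell_{M_H}\circ(\pi\otimes\pi)(f)\in\model_{\psi_{N_M}^{-1}}(\tilde\pi\otimes\tilde\pi)$ — well defined because $(\pi\otimes\pi)(f)$ has finite-dimensional image by admissibility and $\bil_{\mira'\times\mira'}$ realizes $\model_{\psi_{N_M}^{-1}}(\tilde\pi\otimes\tilde\pi)$ as the smooth dual of $\model_{\psi_{N_M}}(\pi\otimes\pi)$ — so that $\Bes_{\data_{\psi_{N_M}}(\pi\otimes\pi)}^{\ell_{M_H},\dirac_e}(f)=\Theta(e)$. Expanding $\Theta=\sum_k v_k^\vee\otimes w_k^\vee$ with $v_k^\vee,w_k^\vee\in\model_{\psi_{N'}^{-1}}(\tilde\pi)$, the identity of the first step reads $(\pi(\Conv(f))A)^*=\sum_k\bil_{\mira'}(A,v_k^\vee)\,w_k^\vee$ for all $A$. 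On the other side I would set $\Theta':=\dirac_e\circ\pi(\Conv(f))\in\model_{\psi_{N'}^{-1}}(\tilde\pi)$, so that $\Bes_{\data_{\psi_{N'}}(\pi)}^{\dirac_e,\dirac_e}(\Conv(f))=\Theta'(e)$; evaluating the previous identity at the identity of $G'$ (note $e^*=e$, so its left-hand side is $(\pi(\Conv(f))A)(e)=\bil_{\mira'}(A,\Theta')$) gives $\Theta'=\sum_k w_k^\vee(e)\,v_k^\vee$. Hence $\Theta(e)=\sum_k v_k^\vee(e)w_k^\vee(e)=\Theta'(e)$, which is exactly the claim.

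I do not expect a genuine obstacle: the content is the $M_H$-invariance identity of the first step, and the rest is bookkeeping. The one point of care is that $\dirac_e$ is not a smooth functional and so cannot be represented by a vector; for that reason the comparison of $\Theta(e)$ with $\Theta'(e)$ must go through the explicit finite expansion $\Theta=\sum_k v_k^\vee\otimes w_k^\vee$ rather than through a single duality identity. The remaining verifications — the compatibility of Haar measures under $\embd$ and the change of variables on $M$, and the interchanges of the linear functionals $\ell_{M_H}$, $\bil_{\mira'}$, $\dirac_e$ with the integrals — are all routine consequences of compact supports, local constancy, and admissibility.
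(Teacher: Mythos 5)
Your proof is correct, but it takes a more hands-on route than the paper, which derives the lemma in three formal steps from the elementary properties of Bessel distributions established in Lemma \ref{lem: beselem}: one first rewrites the left-hand side as $\Bes_{\data_{\psi_{N'}}(\pi)\otimes\data_{\psi_{N'}}(\pi)^*}^{\ell_{M_H},\dirac_e\otimes\dirac_e}(\tilde f)$ with $\tilde f(\sm{g_1}{}{}{g_2})=f(\sm{g_1}{}{}{g_2^*})$ and the second factor carrying the $*$-twisted action, then applies part \ref{part: inter} to the morphism $\id\otimes(W\mapsto W^*)$ to land in $\data_{\psi_{N'}}(\pi)\otimes\data_{\psi_{N'}}(\pi)^\circ$ with the functional $\bil_{\mira'}$, and finally invokes the contraction identity of part \ref{part: convo}, observing that $\tilde f'(x)=\int_{G'}\tilde f(yx,y)\,dy=\Conv(f)(x)$ after the substitution $y=q^*$. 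What you do instead is prove directly the identity $\ell_{M_H}\bigl((\pi\otimes\pi)(f)(A\otimes B)\bigr)=\bil_{\mira'}\bigl(B,(\pi(\Conv(f))A)^*\bigr)$ via the factorization $M=M_H\,\crnr(G')$, the $M_H$-invariance of $\ell_{M_H}$ and the formula $\ell_{M_H}(C\otimes D)=\bil_{\mira'}(D,C^*)$, and then compare the two Bessel distributions through a finite tensor expansion of the representing vector $\Theta$, using non-degeneracy of $\bil_{\mira'}$; in effect you re-derive by hand exactly the special case of parts \ref{part: inter} and \ref{part: convo} that is needed here (part \ref{part: convo} is proved in the paper with admissible bases, whose role your finite expansion of $\Theta$ plays). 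The paper's route buys modularity, since Lemma \ref{lem: beselem} is reused elsewhere (e.g.\ in the convergence argument completing Theorem \ref{thm: main}); your route is self-contained, avoids admissible bases, and makes explicit where the decomposition $M=M_H\,\crnr(G')$, the twist $W\mapsto W^*$, and Bernstein's theorem (the $G'$-invariance of $\bil_{\mira'}$, hence the $M_H$-invariance of $\ell_{M_H}$) enter; your careful treatment of the non-smooth functional $\dirac_e$ via the finite expansion is exactly the right precaution.
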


\begin{proof}
Let
\[
\data_{\psi_{N'}}(\pi)^*=(\model_{\psi_{N'}}(\pi)^*,\model_{\psi_{N'}^{-1}}(\tilde\pi)^*,\bil_{\mira'})
\]
where $\model_{\psi_{N'}}(\pi)^*$ denotes the Whittaker model of $\pi$ with the twisted
action of $G'$ (i.e., right translation by $g^*$) and similarly for $\model_{\psi_{N'}^{-1}}(\tilde\pi)^*$.
Clearly,
\[
\Bes_{\data_{\psi_{N_M}}(\pi\otimes\pi)}^{\ell_{M_H},\dirac_e}(f)=
\Bes_{\data_{\psi_{N'}}(\pi)\otimes\data_{\psi_{N'}}(\pi)^*}^{\ell_{M_H},\dirac_e\otimes\dirac_e}(\tilde f)
\]
where $\tilde f\in\swrz(M)$ is given by $\tilde f(\sm{g_1}{}{}{g_2})=f(\sm{g_1}{}{}{g_2^*})$.
On the other hand, using Lemma \ref{lem: beselem} part \ref{part: inter} for the morphism
\[
(\id\otimes (W\mapsto W^*),\id\otimes (W\mapsto W^*)):
\data_{\psi_{N'}}(\pi)\otimes\data_{\psi_{N'}}(\pi)^*\rightarrow
\data_{\psi_{N'}}(\pi)\otimes\data_{\psi_{N'}}(\pi)^\circ
\]
we get
\[
\Bes_{\data_{\psi_{N'}}(\pi)\otimes\data_{\psi_{N'}}(\pi)^*}^{\ell_{M_H},\dirac_e\otimes\dirac_e}(\tilde f)
=\Bes_{\data_{\psi_{N'}}(\pi)\otimes\data_{\psi_{N'}}(\pi)^\circ}^{\bil_{\mira'},\dirac_e\otimes\dirac_e}(\tilde f).
\]
Finally, we use Lemma \ref{lem: beselem} part \ref{part: convo}.
\end{proof}

In order to compete the proof of Theorem \ref{thm: cmprbes}
it remains to note that by \eqref{eq: mod}, for any $f\in\swrz(G)$ and $g\in G'$ we have
\begin{align*}
\Conv(f^\sharp)(g)=&\abs{\det g}^{\frac{n+1}2}\int_{M_H}\modulus_{P_H}(m)\int_{\K_H}\int_Uf(km\crnr(g)u)\ du\ dk\ dm
\\=&\abs{\det g}^{\frac{1-n}2}\int_{M_H}\modulus_{P_H}(m)\int_{\K_H}\int_Uf(kmu\crnr(g))\ du\ dk\ dm
\\=&\abs{\det g}^{\frac{1-n}2}\int_{M_H}\modulus_{P_H}(m)\int_{\K_H}\int_{U_H\bs U}\int_{U_H}f(kmuv\crnr(g))\ du\ dv\ dk\ dm
\\=&\abs{\det g}^{\frac{1-n}2}\int_{U_H\bs U}\int_H f(hv\crnr(g))\ dh\ dv=\trns f(g).
\end{align*}

\section{Proof of Proposition \ref{prop: mainper}} \label{sec: proofmain}

In this section we prove Proposition \ref{prop: mainper}, which was the key ingredient in the proof of Theorem \ref{thm: cmprbes}.
In fact, in Corollary \ref{cor: modeltrans} below we prove a more precise statement.
Let $\pi\in\Irr_{\temp}G'$ and $\sigma=\Speh(\pi)$.
Following \cite{MR3431601} and its terminology, we will construct an explicit isomorphism (\emph{model transition})
\[
\trans_H^{(N,\psi_N)}:\model_H(\sigma)\rightarrow\model_{\psi_N}(\sigma)
\]
given by a regularized integral. The inverse map
\[
\trans_{(N,\psi_N)}^H:\model_{\psi_N}(\sigma)\rightarrow\model_H(\sigma)
\]
will also be given by an explicit (convergent) integral.

\subsection{Relative basic spherical functions} \label{sec: relbasic}
Let $\Xi^{G'}$ be the basic spherical function for $G'$ (see \cite{MR946351}).
Define $\Xi_\varpi$ to be the positive, left $U$-invariant, right $\K$-invariant function on $G$ given by
\begin{equation} \label{def: Xivarpi}
\Xi_\varpi(umk)=\modulus_P(m)^{1/2}\varpi(m)\Xi^{G'}(\proj_{H_M}(m)),\ \ u\in U, m\in M, k\in\K%
\end{equation}
where
\[
\proj_{H_M}:M\rightarrow G'
\]
is the left $M_H$-invariant function given by
\[
\proj_{H_M}(\sm{g_1}{}{}{g_2})=(g_2^*)^{-1}g_1.
\]
Note that $\Xi_\varpi$ is well-defined since $\Xi^{G'}\circ\proj_{H_M}$ is right $\K\cap M$-invariant.
Similarly define $\Xi_{\varpi^{-1}}$.
Note that by \eqref{eq: mod}, $\Xi_\varpi$ is left $(M_H,\modulus_{P_H})$-equivariant
and $\Xi_{\varpi^{-1}}$ is left $(M_H,\modulus_P\modulus_{P_H}^{-1})$-equivariant.

We define the basic spherical function $\Xi^{H\bs G}$ for $H\bs G$ by
\begin{equation} \label{def: xi}
\Xi^{H\bs G}(g)=\int_{P_H\bs H}\Xi_\varpi(hg)\ dh.
\end{equation}
Define $\reltemp(H\bs G)$ to be the space of smooth functions on $H\bs G$ that
are majorized by a constant multiple of $\Xi^{H\bs G}$, endowed with the norm
\[
\sup_{g\in H\bs G}\abs{f(g)}\Xi^{H\bs G}(g)^{-1}.
\]
(We could have defined more generally the larger space of tempered functions on $H\bs G$, but for our
purposes this is not necessary.)

Similarly, let $\reltemp(UM_H\bs G;\modulus_P\modulus_{P_H}^{-1})$ be the space of
smooth, left $U$-invariant and left $(M_H,\modulus_P\modulus_{P_H}^{-1})$-equivariant functions $f$ on $G$
that are majorized by a constant multiple of $\Xi_{\varpi^{-1}}$.
We may identify $\reltemp(UM_H\bs G;\modulus_P\modulus_{P_H}^{-1})$ with the induced space
\[
I_P(\reltemp(M_H\bs M),\varpi^{-1})
\]
where $\reltemp(M_H\bs M)$ denotes the space of smooth,
left $M_H$-invariant functions on $M$ whose precomposition with $\crnr:G'\rightarrow M$ is majorized by a constant multiple of $\Xi^{G'}$.

Finally, let $\reltempw(N_M\bs M;\psi_{N_M})$ be the space of smooth, $(N_M,\psi_{N_M})$-equivariant tempered functions $f$ on $M$
(see \cite{1812.00047}*{\S2.4}).
Let $\reltempw_{\varpi^{-1}}(N\bs G;\psi_N)$ be the space of smooth,
left $(N,\psi_N)$-equivariant functions $f$ on $G$ such that for all $g\in G$,
the function
\[
m\in M\mapsto\modulus_P^{-\frac12}(m)\varpi(m)f(mg)
\]
belongs to $\reltempw(N_M\bs M;\psi_{N_M})$.
We may identify $\reltempw_{\varpi^{-1}}(N\bs G;\psi_N)$ with
\[
I_P(\reltempw(N_M\bs M;\psi_{N_M}),\varpi^{-1}).
\]

\begin{lemma}
The map
\[
f\mapsto\int_{U_H\bs U}f(ug)\ du
\]
defines a continuous linear operator
\[
\reltemp(H\bs G)\rightarrow\reltemp(UM_H\bs G;\modulus_P\modulus_{P_H}^{-1}).
\]
\end{lemma}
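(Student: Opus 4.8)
The plan is to show that the integral $f\mapsto F(g):=\int_{U_H\bs U}f(ug)\ du$ converges, defines a function with the required equivariance properties, is majorized by a multiple of $\Xi_{\varpi^{-1}}$, and that the resulting operator is continuous. First I would check the equivariance and invariance properties, which are formal: for $f\in\reltemp(H\bs G)$ the function $F$ is manifestly left $U$-invariant (the integral over $U_H\bs U$ absorbs left translation by $U$, since $U_H$ normalizes and $f$ is left $U_H$-invariant, being left $H$-invariant), and a change of variables $u\mapsto mum^{-1}$ for $m\in M_H$ together with the fact that $M_H$ normalizes $U$ and $U_H$ and that the modulus of this conjugation action on $U_H\bs U$ is $\modulus_P\modulus_{P_H}^{-1}$ on $M_H$ — this is exactly the content of \eqref{eq: mod} — yields the stated left $(M_H,\modulus_P\modulus_{P_H}^{-1})$-equivariance.

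The substantive point is the majorization and convergence. I would bound $\abs{f(ug)}\le C\,\Xi^{H\bs G}(ug)=C\int_{P_H\bs H}\Xi_\varpi(hug)\ dh$ and then exchange the order of integration, reducing everything to an estimate for
\[
\int_{U_H\bs U}\int_{P_H\bs H}\Xi_\varpi(hug)\ dh\ du\ll\Xi_{\varpi^{-1}}(g).
\]
By the $\K$-invariance of $\Xi_\varpi$ and Iwasawa it suffices to treat $g=m_0\in M$, and using the explicit formula \eqref{def: Xivarpi} for $\Xi_\varpi$ in terms of $\Xi^{G'}$ and $\proj_{H_M}$, this becomes a statement about the basic spherical function $\Xi^{G'}$ of $G'$: one must show that the double integral over $U_H\bs U$ (which, after the usual identification, is an integral over $N'\bs\mira'$-type data twisted into a full unipotent) and over $P_H\bs H$ of $\Xi^{G'}$ precomposed with the relevant projection is dominated by $\Xi^{G'}(\proj_{H_M}(m_0))$ up to the correct power of $\modulus_P,\varpi$. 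The key inputs here are the standard doubling/Gindikin–Karpelevich type estimates for $\Xi^{G'}$ — its behavior under unipotent integration, $\Xi^{G'}\ll\delta_{B'}^{1/2}(1+\text{length})^d$ on the positive chamber, and the fact that $\int_{\K_H}$ and $\int_{U_H\bs U}$ of $\Xi_\varpi$ reproduce $\Xi^{H\bs G}$ by its very definition. I expect this estimate to be essentially a reprise of the majorization arguments already used to prove convergence of \eqref{def: xi} (that $\Xi^{H\bs G}$ is finite), combined with a symmetry/contragredient identity relating $\Xi_\varpi$ and $\Xi_{\varpi^{-1}}$ under the intertwining integral over $U$.

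Once the pointwise bound $\abs{F(g)}\le C'\,\Xi_{\varpi^{-1}}(g)$ is in hand with $C'$ a fixed multiple of the $\reltemp(H\bs G)$-norm of $f$, continuity of the operator is immediate from the definitions of the two norms. Thus the heart of the matter, and the step I expect to be the main obstacle, is the spherical-function estimate: controlling the iterated integral of $\Xi^{G'}$ against $\Xi_{\varpi^{-1}}$ uniformly. I would isolate this as a separate lemma about $\Xi^{G'}$ on $G'$ (a "doubling estimate"), prove it by reduction to the positive Weyl chamber via the Cartan decomposition and the known asymptotics of $\Xi^{G'}$, and then feed it back. Everything else — the group-theoretic bookkeeping with $U_H\bs U$, $P_H\bs H$, and the normalizations in \eqref{eq: mod} — is routine given the conventions already fixed in the paper.
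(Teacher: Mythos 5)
You correctly reduce everything to the estimate $\int_{U_H\bs U}\Xi^{H\bs G}(ug)\,du\ll\Xi_{\varpi^{-1}}(g)$, and the formal equivariance statements and the final deduction of continuity are fine. But the heart of the lemma is exactly that estimate, and your proposal does not prove it: you defer it to an unproven ``doubling estimate'' for $\Xi^{G'}$, to be established by Cartan decomposition and known asymptotics, without showing that this route actually produces the precise exponent shift $\modulus_P^{1/2}\varpi\mapsto\modulus_P^{1/2}\varpi^{-1}$ or even that the double integral over $U_H\bs U$ and $P_H\bs H$ converges. (Your suggestion to rewrite the inner integral via $H=P_H\K_H$ is also unpromising, since $\K_H$ does not interact well with $U$; and the remark that one can ``reprise the majorization argument used for the convergence of \eqref{def: xi}'' is off target, because $P_H\bs H$ is compact and \eqref{def: xi} needs no convergence argument at all.)

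The missing idea, which is the whole content of the paper's half-page proof, is to evaluate the integral defining $\Xi^{H\bs G}$ over the open $P_H$-cell of $P_H\bs H$: since $\Xi_\varpi$ is left $U$-invariant and left $(M_H,\modulus_{P_H})$-equivariant, one has $\Xi^{H\bs G}(g)=\int_{U_H}\Xi_\varpi(w_U^{-1}ug)\,du$. Plugging this into the outer integral over $U_H\bs U$ collapses the two unipotent integrations into the single integral $\int_U\Xi_\varpi(w_U^{-1}ug)\,du$, i.e.\ the standard intertwining integral applied to the basic spherical section, which by the Gindikin--Karpelevich computation (the argument of \cite{MR3431601}*{Lemma 4.5}) converges and equals a constant multiple of $\Xi_{\varpi^{-1}}(g)$; this gives convergence, the majorization, and continuity in one stroke. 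You do gesture at ``a symmetry identity relating $\Xi_\varpi$ and $\Xi_{\varpi^{-1}}$ under the intertwining integral over $U$,'' so you have sensed the right mechanism, but without the open-cell rewriting that turns the iterated integral into that single $U$-integral, your plan leaves the essential analytic step unestablished.
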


\begin{proof}
We may write
\[
\Xi^{H\bs G}(g)=\int_{U_H}\Xi_\varpi(w_U^{-1} ug)\ du.
\]
Thus,
\[
\int_{U_H\bs U}\Xi^{H\bs G}(ug)\ du=\int_U\Xi_\varpi(w_U^{-1} ug)\ du,
\]
and the argument of \cite{MR3431601}*{Lemma 4.5} (using the Gindikin--Karpelevich formula) shows that
this is a constant multiple of $\Xi_{\varpi^{-1}}(g)$.
The lemma follows.
\end{proof}

Recall that the integral
\[
f\in\swrz(M_H\bs M)\mapsto\int_{N_{M_H}\bs N_M}f(um)\psi_{N_M}(u)^{-1}\ du,\ \ m\in M
\]
extends to a continuous linear operator
\[
\reltemp(M_H\bs M)\rightarrow\reltempw(N_M\bs M;\psi_{N_M})
\]
which we denote by
\[
\int_{N_{M_H}\bs N_M}^{\reg}f(um)\psi_{N_M}(u)^{-1}\ du
\]
\cite{1812.00047}*{Lemma 2.14.1}.
(For instance, it can be defined using the setting of \cite{MR3431601}*{\S2}.)
It gives rise to a continuous linear operator
\[
\reltemp(UM_H\bs G;\modulus_P\modulus_{P_H}^{-1})\rightarrow\reltempw_{\varpi^{-1}}(N\bs G;\psi_N).
\]
By the lemma above, we infer that the map
\[
f\in\swrz(H\bs G)\mapsto\int_{N_H\bs N}f(ug)\psi_N(u)^{-1}\ du
\]
extends to a continuous linear operator
\[
\reltemp(H\bs G)\rightarrow\reltempw_{\varpi^{-1}}(N\bs G;\psi_N),
\]
which we write as
\[
\int_{N_H\bs N}^{\reg}f(ug)\psi_N(u)^{-1}\ du.
\]

\subsection{From symplectic to Zelevinsky}\label{sec: mod symp to zel}
\begin{lemma}\label{lem: intM}
Let $\pi\in\Irr_{\temp}G'$ and $\sigma=\Speh(\pi)$.
Then, $\model_H(\sigma)\subseteq\reltemp(H\bs G)$.
In other words, $\transs\varphi\in \reltemp(H\bs G)$ for any
$\varphi\in I_P(\model_{\psi_{N_M}}(\pi\otimes\pi),\varpi)$.
Moreover, for any $g\in G$ we have
\[
\int_{N_H\bs N}^{\reg} \transs\varphi(ug)\psi_N(u)^{-1}\ du =(\tilde\M_\pi\varphi)(g).
\]
\end{lemma}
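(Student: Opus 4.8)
The plan is to verify the two assertions in turn. For the first, we need to show that $\transs\varphi$ is majorized by a constant multiple of $\Xi^{H\bs G}$. Unwinding the definition \eqref{def: transs}, we have $\transs\varphi(g) = \ell_H^{\ind}(I_P(g,\varpi)\varphi) = \int_{P_H\bs H}\ell_{M_H}(\varphi(hg))\,dh$. Since $\pi$ is tempered and $\ell_{M_H}$ is the Bernstein-type functional on the Whittaker model $\model_{\psi_{N'}}(\pi)$, the standard bound for the $\mira'$-integral of Whittaker functions (together with the fact that $I_P(\cdot,\varpi)\varphi(hg)$ is controlled, as a vector in the tempered representation $\pi\otimes\pi$, by the matrix-coefficient bound for $M$) gives $\abs{\ell_{M_H}(\varphi(hg))}\ll \Xi_\varpi(hg)$ uniformly in $h$; integrating over $P_H\bs H$ against \eqref{def: xi} yields the majorization by $\Xi^{H\bs G}$. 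So I would first record, as a preliminary, the pointwise estimate $\abs{\ell_{M_H}(\varphi(x))}\ll_\varphi \Xi_\varpi(x)$ for $x\in G$, which is exactly the $M$-analogue of the temperedness bound for the symplectic period on the Whittaker model, and then conclude $\model_H(\sigma)\subseteq\reltemp(H\bs G)$ by integrating.

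For the second assertion, the strategy is to compute the regularized integral $\int_{N_H\bs N}^{\reg}\transs\varphi(ug)\psi_N(u)^{-1}\,du$ by first restricting attention to $\varphi$ in a dense subspace where the integral converges absolutely, then invoking continuity of the operator $\int_{N_H\bs N}^{\reg}$ established at the end of \S\ref{sec: relbasic} to extend the identity to all of $\reltemp(H\bs G)$. On the convergent locus, I would unfold: $\int_{N_H\bs N}\transs\varphi(ug)\psi_N(u)^{-1}\,du = \int_{N_H\bs N}\int_{P_H\bs H}\ell_{M_H}(\varphi(hug))\,dh\,\psi_N(u)^{-1}\,du$, and manipulate the double integral. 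The key geometric input is the factorization of $N$ and $H$ matching the decomposition $N = N_M U$ (with $U$ the unipotent radical of $P$), together with the isomorphism $\bar U_{H_{n-1}}\bs\bar U_D\simeq\bar U_H\bs\bar U$ and the Iwasawa-type manipulations already rehearsed in the heuristic argument for Proposition~\ref{prop: mainper}. After collapsing the $P_H\bs H$ integral against the $N_H\bs N$ integral, the remaining integral over $U$ should reconstruct precisely the intertwining integral defining $\M_\pi$, and then passing from $I_P(\cdot,\varpi^{-1})$-valued functions to $C^\infty(N\bs G,\psi_N)$ via $\varphi\mapsto\varphi(g)(e)$ turns $\M_\pi$ into $\tilde\M_\pi$; thus the right-hand side emerges as $(\tilde\M_\pi\varphi)(g)$.

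The main obstacle is exactly the one flagged in the discussion after Proposition~\ref{prop: mainper}: the naive double integral need not converge, so the formal unfolding of $I_2$ there cannot be taken at face value. The resolution is to do all the unfolding inside the tempered/regularized framework of \cite{MR3431601}*{\S2} — i.e., to interpret $\int_{N_H\bs N}^{\reg}$, $\int_{N_{M_H}\bs N_M}^{\reg}$, and the inner $\mira'$-integral as the continuous extensions of the respective absolutely convergent integrals on Schwartz functions, and to justify each interchange by the continuity statements of \cite{1812.00047}*{Lemma 2.14.1} and the lemma just proved, rather than by Fubini. Concretely, I would choose $\varphi$ of the form $I_P(f,\varpi)\varphi_0$ for $f\in\swrz(G)$ and $\varphi_0$ arbitrary; then $\transs\varphi$ lies in a space of functions with enough decay that all integrals converge, the computation goes through rigorously, and since such $\varphi$ span a dense subspace (and both sides are continuous in $\varphi$), the identity extends. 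A secondary point requiring care is keeping track of the modulus characters: the factors $\abs{\det m}^n$ versus $\abs{\det m}^{1-n}$ and the $\modulus_P^{\pm 1/2}\varpi$ twists must be matched exactly as in \eqref{eq: mod}, and it is precisely the consistency of \eqref{eq: mod} with the definitions of $\Xi_\varpi$ and $\Xi_{\varpi^{-1}}$ in \eqref{def: Xivarpi} that makes the regularized integral land in $\reltempw_{\varpi^{-1}}(N\bs G;\psi_N)$, which is where $\tilde\M_\pi\varphi$ lives.
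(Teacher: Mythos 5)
Your treatment of the first assertion is essentially the paper's: record the pointwise bound $\abs{\ell_{M_H}(\varphi(x))}\ll_\varphi\Xi_\varpi(x)$ coming from temperedness of $\pi$ (the function $m\mapsto\ell_{M_H}(R(m)W)$ being an $M_H$-invariant matrix coefficient of $\pi$), then integrate over $P_H\bs H$ against \eqref{def: xi}. For the second assertion, however, your plan has a genuine gap on two counts. First, the convergence strategy — unfold the double integral over $N_H\bs N$ and $P_H\bs H$ absolutely for $\varphi$ in a ``dense subspace'' of the form $I_P(f,\varpi)\varphi_0$ and then extend by continuity — is precisely the route the discussion after Proposition \ref{prop: mainper} rules out: the authors state they cannot justify the double-integral convergence \emph{even for special} $\varphi$, and restricting to the image of $I_P(f,\varpi)$ gains nothing (by smoothness every vector is of that form, and $\transs\varphi$ acquires no extra decay). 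No Fubini or density argument is needed, nor available. The actual mechanism is that, by its construction in \S\ref{sec: relbasic}, the operator $\int^{\reg}_{N_H\bs N}$ on $\reltemp(H\bs G)$ is the composition of the absolutely convergent integral over $U_H\bs U$ (which, by the lemma of that subsection together with the first part, lands in $\reltemp(UM_H\bs G;\modulus_P\modulus_{P_H}^{-1})$) with the regularized integral over $N_{M_H}\bs N_M$; the $U_H\bs U$-integral is then evaluated exactly, by combining it with the open-cell parametrization $P_H\bs H\approx w_U^{-1}U_H$, yielding $\ell_{M_H}^*(\M_\pi\varphi(g))$ — this is where the intertwining integral over $U$ legitimately appears.

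Second, and more importantly, your final step is not a formality. After the $U$-collapse one is left with $\int^{\reg}_{N_{M_H}\bs N_M}\ell_{M_H}^*\bigl(R(v)(\M_\pi\varphi(g))\bigr)\psi_{N_M}(v)^{-1}\,dv$, and the conclusion requires this to equal $\M_\pi\varphi(g)(e)=(\tilde\M_\pi\varphi)(g)$, i.e.\ that the regularized $\psi_{N_M}$-coefficient of the $M_H$-period functional on $\model_{\psi_{N_M}}(\pi\otimes\pi)$ recovers evaluation at the identity. This is a substantive inversion identity valid for tempered representations, which the paper imports from \cite{MR3267120}*{Lemma 4.4} (see also \cite{1812.00047}*{Proposition 2.14.3}); it is not the innocuous passage ``from $I_P(\cdot,\varpi^{-1})$-valued functions to $C^\infty(N\bs G,\psi_N)$ via evaluation at $e$'' that your plan describes. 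Without identifying and invoking this input, the displayed identity in the lemma does not follow from your unfolding, even formally.
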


\begin{proof}
For any $W\in\model_{\psi_{N_M}}(\pi\otimes\pi)$, the function
\[
f(m)=\ell_{M_H}(R(m)W),\ \ m\in M
\]
is left $M_H$-invariant and $g\in G'\mapsto f(\crnr(g))$ is a matrix coefficient for $\pi$.
Thus, since $\pi$ is tempered we have
\[
\abs{f(m)}\ll_W\Xi^{G'}(\proj_{H_M}(m)), \ \ \ m\in M
\]
\cite{MR946351}.
It follows that for every $\varphi\in I_P(\model_{\psi_{N_M}}(\pi\otimes\pi),\varpi)$,
\[
\ell_{M_H}(\varphi(g))\ll_\varphi \Xi_\varpi(g),\ \ g\in G.
\]
The first part now follows from \eqref{def: transs}, \eqref{def: ellHind} and \eqref{def: xi}.

For the second part we may assume $g=e$. We have
\begin{align*}
&\int_{U_H\bs U} \transs\varphi(ug)\ du=
\int_{U_H\bs U}\int_{U_H} \ell_{M_H}(I_P(ug,\varpi)\varphi(w_U^{-1} v))\ dv\ du
\\=&\int_{U_H\bs U}\int_{U_H} \ell_{M_H}(\varphi(w_U^{-1} vug))\ dv\ du=
\int_U \ell_{M_H}(\varphi(w_U^{-1} ug))\ du=\ell_{M_H}^*(\M_\pi\varphi(g)).
\end{align*}
Hence,
\begin{align*}
&\int_{N_H\bs N}^{\reg}  \transs\varphi(u)\psi_N(u)^{-1}\ du =
\int_{N_{M_H}\bs N_M}^{\reg} \int_{U_H\bs U}\transs\varphi(uv)\ du\ \psi_{N_M}(v)^{-1}\ dv
\\=&\int_{N_{M_H}\bs N_M}^{\reg}\ell_{M_H}^*(\M_\pi \varphi(v))\psi_{N_M}(v)^{-1}\ dv
=\int_{N_{M_H}\bs N_M}^{\reg}\ell_{M_H}^*(R(v)(\M_\pi \varphi(e))) \psi_{N_M}(v)^{-1}\ dv.
\end{align*}
It follows from \cite{MR3267120}*{Lemma 4.4} that the right-hand side equals $(\M_\pi\varphi)(e)(e)$, as required.
(See also \cite{1812.00047}*{Proposition 2.14.3} and the discussion preceding it.)
\end{proof}

\begin{corollary}\label{cor: symptozel}
The map
\[
\trans_H^{(N,\psi_N)}:\model_H(\sigma)\rightarrow \model_{\psi_N}(\sigma),\ \
L\mapsto\int_{N_H\bs N}^{\reg}  L(u\cdot)\psi_N(u)^{-1}\ du
\]
is a model transition from the symplectic model of $\sigma$ to its Zelevinsky model.
Moreover,
\[
\trans_H^{(N,\psi_N)}\circ\transs=\tilde\M_\pi.
\]
\end{corollary}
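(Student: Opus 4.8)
The plan is to read Corollary \ref{cor: symptozel} off of Lemma \ref{lem: intM}, which already carries all the analytic content. Recall the set-up from \S\ref{sec: besidn}: $\model_H(\sigma)$ is by definition the image of the $G$-map $\transs$, the map $\tilde\M_\pi$ is a $G$-map whose image is $\model_{\psi_N}(\sigma)$, and both $\model_H(\sigma)$ and $\model_{\psi_N}(\sigma)$ are realizations of the one irreducible representation $\sigma=\Speh(\pi)$. Both $\transs$ and $\tilde\M_\pi$ intertwine right translation (for $\transs$ this is immediate from \eqref{def: transs} and $I_P(xg,\varpi)=I_P(x,\varpi)I_P(g,\varpi)$; for $\tilde\M_\pi$ it follows from the $G$-equivariance of $\M_\pi$ together with $[\varphi\mapsto\varphi(\cdot)(e)]$ intertwining right translation). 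We must show that $\trans_H^{(N,\psi_N)}$, given by the regularized integral $L\mapsto\int_{N_H\bs N}^{\reg}L(ug)\psi_N(u)^{-1}\ du$, carries $\model_H(\sigma)$ isomorphically onto $\model_{\psi_N}(\sigma)$ and satisfies $\trans_H^{(N,\psi_N)}\circ\transs=\tilde\M_\pi$.

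First I would check that $\trans_H^{(N,\psi_N)}$ is unambiguously defined on all of $\model_H(\sigma)$. By the first assertion of Lemma \ref{lem: intM} one has $\model_H(\sigma)\subseteq\reltemp(H\bs G)$, and in \S\ref{sec: relbasic} the operator $f\mapsto\int_{N_H\bs N}^{\reg}f(ug)\psi_N(u)^{-1}\ du$ was constructed as a continuous linear operator $\reltemp(H\bs G)\rightarrow\reltempw_{\varpi^{-1}}(N\bs G;\psi_N)$; hence $\trans_H^{(N,\psi_N)}(L)$ makes sense and depends only on $L$, for every $L\in\model_H(\sigma)$. Taking $L=\transs\varphi$ with $\varphi\in I_P(\model_{\psi_{N_M}}(\pi\otimes\pi),\varpi)$, the second assertion of Lemma \ref{lem: intM} gives exactly
\[
\trans_H^{(N,\psi_N)}(\transs\varphi)=\int_{N_H\bs N}^{\reg}\transs\varphi(ug)\psi_N(u)^{-1}\ du=\tilde\M_\pi\varphi .
\]
Since $\varphi$ ranges over all of $I_P(\model_{\psi_{N_M}}(\pi\otimes\pi),\varpi)$ and $\transs$ is onto $\model_H(\sigma)$, this is precisely the identity $\trans_H^{(N,\psi_N)}\circ\transs=\tilde\M_\pi$ of the Corollary, and it shows at the same time that $\trans_H^{(N,\psi_N)}$ maps $\model_H(\sigma)$ onto the image of $\tilde\M_\pi$, namely $\model_{\psi_N}(\sigma)$.

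It then remains to see that $\trans_H^{(N,\psi_N)}:\model_H(\sigma)\rightarrow\model_{\psi_N}(\sigma)$ is a $G$-isomorphism, i.e. a model transition. Equivariance is formal: given $L=\transs\varphi\in\model_H(\sigma)$ and $g\in G$, we have $R(g)L=\transs(I_P(g,\varpi)\varphi)$, so $\trans_H^{(N,\psi_N)}(R(g)L)=\tilde\M_\pi(I_P(g,\varpi)\varphi)=R(g)\tilde\M_\pi\varphi=R(g)\trans_H^{(N,\psi_N)}(L)$. A nonzero $G$-equivariant map out of the irreducible module $\model_H(\sigma)\simeq\sigma$ is injective, and we have just seen it is onto $\model_{\psi_N}(\sigma)$; hence it is an isomorphism. (As a byproduct, $\ker\transs=\ker\tilde\M_\pi$, both equal to the kernel of the canonical projection $I_P(\model_{\psi_{N_M}}(\pi\otimes\pi),\varpi)\rightarrow\sigma$.) I do not expect a genuine obstacle at this stage: the real input is Lemma \ref{lem: intM}, whose proof rests in turn on the regularization set up in \S\ref{sec: relbasic} together with \cite{MR3267120}*{Lemma 4.4} (cf. \cite{1812.00047}*{Proposition 2.14.3}); here one merely has to be careful to compare the three operators $\transs$, $\tilde\M_\pi$ and $\trans_H^{(N,\psi_N)}$ on the nose and to use the irreducibility of $\sigma$ to upgrade the surjection to an isomorphism.
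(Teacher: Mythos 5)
Your proposal is correct and follows the paper's own route: in the paper the corollary is stated as an immediate consequence of Lemma \ref{lem: intM}, exactly as you argue — well-definedness from $\model_H(\sigma)\subseteq\reltemp(H\bs G)$ and the continuity of the regularized integral, the identity $\trans_H^{(N,\psi_N)}\circ\transs=\tilde\M_\pi$ from the lemma's second assertion, and the isomorphism from $G$-equivariance, surjectivity onto the image of $\tilde\M_\pi$, and irreducibility of $\sigma$.
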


\subsection{From Zelevinsky to symplectic} \label{sec: zel2sym}

In this subsection we compute the inverse of the model transition $\trans_H^{(N,\psi_N)}$,
which will also yield Proposition \ref{prop: mainper}.

We continue to use the notation introduced in \S\ref{sec: mod symp to zel}.

\begin{lemma} \label{lem: abs conv}
For any $W\in\reltempw_{\varpi^{-1}}(N\bs G;\psi_N)$ the integral
\[
\int_{N_H\bs\mira_H} W(h)\ dh=\int_{N_{H_{n-1}}\bs H_{n-1}} W(h)\ dh
\]
is absolutely convergent and defines a continuous $\mira_H$-invariant linear form on
$\reltempw_{\varpi^{-1}}(N\bs G;\psi_N)$.
\end{lemma}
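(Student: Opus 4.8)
The plan is to prove Lemma~\ref{lem: abs conv} by reducing the convergence of the integral over $N_H\bs\mira_H$ to the convergence of the integral defining a tempered functional on the smaller group $H_{n-1}=\Sp_{n-1}(F)$, and then invoking the standard theory of tempered functions on reductive groups as in \cite{1812.00047}*{\S2.4}. First I would justify the identity $\int_{N_H\bs\mira_H}W(h)\,dh=\int_{N_{H_{n-1}}\bs H_{n-1}}W(h)\,dh$. The point is that $\mira_H=\mira\cap H$ is the stabilizer in $H$ of $e_{2n}$, and one has a semidirect-product decomposition $\mira_H=U_{H}'\rtimes H_{n-1}$ (or a similar factorization through the appropriate unipotent radical) in which the extra unipotent directions lie inside $N_H$ and carry a non-trivial character under $\psi_N$. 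Integrating those directions out first collapses $N_H\bs\mira_H$ onto $N_{H_{n-1}}\bs H_{n-1}$, using that $\psi_N$ restricted to $N_H$ is trivial (by our standing assumption) so that no oscillatory cancellation interferes and the reduction is a genuine Fubini step, not a regularization.

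Next I would bound the integrand. By hypothesis $W\in\reltempw_{\varpi^{-1}}(N\bs G;\psi_N)$, so by the very definition of this space the function $m\mapsto\modulus_P^{-1/2}(m)\varpi(m)W(mg)$ lies in $\reltempw(N_M\bs M;\psi_{N_M})$, which in turn (unwinding the identification $N_M\simeq N'\times N'$ and $M_H\simeq G'$ via $\embd$) means $W$ restricted to the relevant cell is governed by the Harish-Chandra $\Xi^{G'}$-function composed with $\proj_{H_M}$, twisted by $\modulus_P^{1/2}\varpi$. Restricting further to $H_{n-1}\subset H\subset G$ and using \eqref{eq: mod}, the weight factor $\modulus_P^{1/2}\varpi$ becomes exactly $\modulus_{P_{H_{n-1}}}$-type data for the Siegel parabolic of $H_{n-1}$, so $W\rest_{H_{n-1}}$ is majorized by a constant multiple of the basic function $\Xi^{H_{n-1}}$ of $H_{n-1}$ (up to the modulus twist that makes the integrand $N_{H_{n-1}}$-equivariant against $\psi_{N_{H_{n-1}}}$). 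Then the absolute convergence of $\int_{N_{H_{n-1}}\bs H_{n-1}}\abs{W(h)}\,dh$ is the standard fact that a $(N,\psi_N)$-tempered function on a reductive $p$-adic group is integrable over $N\bs G$ against its Whittaker-type equivariance --- precisely the content used repeatedly in \cite{1812.00047}*{\S2.4} and \cite{MR3431601}*{\S2}; continuity of the resulting linear form in the norm of $\reltempw_{\varpi^{-1}}(N\bs G;\psi_N)$ is then immediate since the bound is linear in that norm.

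Finally I would establish the $\mira_H$-invariance. Since the functional has now been rewritten as an absolutely convergent integral over $N_{H_{n-1}}\bs H_{n-1}$, right translation by $q\in\mira_H$ acts on $W$; writing $q=u'h_{n-1}$ with $u'$ in the unipotent part and $h_{n-1}\in H_{n-1}$, the $h_{n-1}$-part is absorbed by the invariance of the Haar measure on $N_{H_{n-1}}\bs H_{n-1}$ together with the $(N_{H_{n-1}},\psi_{N_{H_{n-1}}})$-equivariance (the character being trivial on $N_H\cap H_{n-1}$ by our choice), and the $u'$-part is absorbed because $u'\in N_H$ and $\psi_N\rest_{N_H}$ is trivial, so translation by $u'$ just reindexes the integration variable. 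A small unimodularity check --- that $\mira_H$ acts on $N_H\bs\mira_H$ preserving the measure, or equivalently that the relevant modulus characters cancel --- is needed, but this follows from \eqref{eq: mod} and the explicit description of $\mira_H$. The main obstacle I anticipate is purely bookkeeping: carefully identifying the unipotent complement of $H_{n-1}$ inside $\mira_H$ and checking that $\psi_N$ is indeed trivial on it (so the Fubini reduction is legitimate), together with tracking the modulus/weight factors through \eqref{eq: mod} so that the restriction of $W$ to $H_{n-1}$ is correctly recognized as a $(N_{H_{n-1}},\psi_{N_{H_{n-1}}})$-tempered function; once that identification is in place, the convergence and invariance are formal consequences of the cited results.
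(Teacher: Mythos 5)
The reduction $N_H\bs\mira_H\simeq N_{H_{n-1}}\bs H_{n-1}$ (the Heisenberg-type unipotent radical of $\mira_H$ lies in $N_H$) and the closing invariance/continuity remarks are fine, but the central convergence step of your proposal has a genuine gap. You claim that $W\rest_{H_{n-1}}$ is majorized by a multiple of $\Xi^{H_{n-1}}$ and that it is then a ``standard fact'' that such a function is integrable over $N_{H_{n-1}}\bs H_{n-1}$. Neither half is correct. First, since $\psi_N$ is trivial on $N_H$, the restriction of $W$ to $H_{n-1}$ is left $N_{H_{n-1}}$-\emph{invariant}, not equivariant with respect to a nondegenerate character, so the results on tempered Whittaker functions from \cite{1812.00047}*{\S2.4} and \cite{MR3431601}*{\S2} that you invoke simply do not apply to the function you are integrating. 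Second, even granting a majorization by $\Xi^{H_{n-1}}$, the integral of $\Xi^{H_{n-1}}$ over $N_{H_{n-1}}\bs H_{n-1}$ diverges: in Iwasawa coordinates the integrand behaves like $\delta_B^{-1/2}$ up to polynomial factors over the whole torus. There is no general integrability of tempered (or Whittaker-tempered) functions over $N\bs G$ for a reductive group; the convergence in this lemma is not a formal consequence of temperedness on $H_{n-1}$, but depends crucially on the exponent $\varpi^{-1}$ built into $\reltempw_{\varpi^{-1}}(N\bs G;\psi_N)$ and on a genuinely $\GL_n$-theoretic input.

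The actual mechanism is a mirabolic reduction on the $M$-side, which is the idea missing from your proposal. One decomposes $\int_{N_H\bs\mira_H}$ via the Iwasawa decomposition of $\mira_H$ relative to $P_H\cap\mira_H$: the outer integration over $P_H\cap\mira_H\bs\mira_H$ is over a compact set, and the inner integral collapses onto the Levi part $N_{M_H}\bs M_H\cap\mira_H$, which $\embd$ identifies with $N'\bs\mira'$. Tracking the modulus characters against the normalization $\modulus_P^{-\frac12}\varpi$ defining $\reltempw_{\varpi^{-1}}$, the leftover weight is exactly $\abs{\det g}$, so everything reduces to the convergence of $\int_{N'\bs\mira'}\abs{\det g}\,\abs{W'(\embd(g))}\,dg$ for a tempered Whittaker function $W'$ on $M$. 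This converges by \cite{1812.00047}*{Lemma 2.15.1} (Bernstein's bound for the mirabolic integral) together with the elementary fact that $\abs{\det}$ is bounded on the support of $W'\circ\embd$ restricted to $\mira'$. In other words, the decay that saves the integral comes from the nondegenerate character on $N_M$ (transverse to $H$) through the $\GL_n$ mirabolic theory, not from any spherical majorization on $H_{n-1}$; without this step your argument does not establish convergence.
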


\begin{proof}
Since
\[
\int_{N_H\bs\mira_H}\abs{W(h)}\ dh=
\int_{P_H\cap\mira_H\bs\mira_H}\int_{N_H\bs P_H\cap\mira_H}
 \modulus_{\mira_H}(p)\modulus_{\mira_H\cap P_H}^{-1}(p)   \abs{W(pq)}\ dp\ dq
\]
and the outer integration is compact, it is enough to show the convergence of
\[
\int_{N_H\bs P_H\cap\mira_H} \modulus_{\mira_H}(p)\modulus_{\mira_H\cap P_H}^{-1}(p)\abs{W(p)}\ dp
=\int_{N_{M_H}\bs M_H\cap\mira_H} \modulus_{\mira_H}(m)\modulus_{\mira_H\cap P_H}^{-1}(m)\abs{W(m)}\ dm.
\]
Note that $\embd$ identifies $N'\bs\mira'$ with $N_{M_H}\bs\mira_H\cap M_H$.
Moreover, a simple computation shows that
\[
\modulus_{\mira_H}(m)\modulus_{\mira_H\cap P_H}^{-1}(m) \varpi(m)^{-1} \modulus_P^{\frac12}(m)=\abs{\det g},
 \ \ \ m=\embd(g), \ g\in\mira'.
\]
Therefore, the lemma follows from the convergence of
\[
\int_{N'\bs\mira'} \abs{\det g} \abs{W'(\embd(g))}\ dg
\]
for any $W'\in\reltemp(N_{M_H}\bs N_M;\psi_{N_M})$.
In turn, this follows from \cite{1812.00047}*{Lemma 2.15.1} and the fact that $\abs{\det}$ is bounded
on the support of $W'\circ\embd$.
\end{proof}


\label{sec: modelfin}

\begin{proposition} \label{prop: modeltrans}
For any $L\in\reltemp(H\bs G)$ we have
\[
\int_{N_H\bs\mira_H}\big(\int_{N_H \bs N}^{\reg}L(uxg)\psi_N^{-1}(u)\ du\big) \ dx=L(g),\ g\in G.
\]
\end{proposition}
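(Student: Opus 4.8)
The plan is to recognise the identity as a Fourier inversion on $H\bs G$ and to prove it by the regularised root-exchange method of \cite{MR3431601}. One cannot reduce to $\swrz(H\bs G)$ and argue by density: in the natural topology of $\reltemp(H\bs G)$ the subspace $\swrz(H\bs G)$ is not dense (already $\Xi^{H\bs G}$ is not approximable in the $\Xi^{H\bs G}$-weighted sup norm), so the entire argument must be run with the regularised integrals themselves, using their continuity and the fact that they extend the naive integrals on $\swrz$. Both sides of the asserted identity are right $G$-equivariant and, by the continuity statements of \S\ref{sec: relbasic} and Lemma \ref{lem: abs conv}, depend continuously on $L\in\reltemp(H\bs G)$; in particular one may fix $g$ (or specialise to $g=e$) once and for all.

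The first step is to unfold both integrals. On the outer side, since $\trans_H^{(N,\psi_N)}(L)\in\reltempw_{\varpi^{-1}}(N\bs G;\psi_N)$, Lemma \ref{lem: abs conv} replaces $\int_{N_H\bs\mira_H}$ by $\int_{N_{H_{n-1}}\bs H_{n-1}}$. On the inner side, using $N=N_M\ltimes U$, $N_H=N_{M_H}\ltimes U_H$, the triviality of $\psi_N$ on $U$, and the compatibility of $\int^{\reg}$ with this fibration, I would write
\[
\int_{N_H\bs N}^{\reg} L(uxg)\psi_N^{-1}(u)\ du=\int_{N_{M_H}\bs N_M}^{\reg}\big(\int_{U_H\bs U} L(\nu v xg)\ dv\big)\psi_{N_M}^{-1}(\nu)\ d\nu,
\]
the inner $U$-integral being absolutely convergent by the lemma in \S\ref{sec: relbasic}. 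After justifying a Fubini interchange of the $H_{n-1}$-integration with the $N_{M_H}\bs N_M$-integration — the delicate point, discussed below — the left-hand side becomes a single iterated integral over $N_{H_{n-1}}\bs H_{n-1}$, over $N_{M_H}\bs N_M$ against $\psi_{N_M}$, and over $U_H\bs U$.

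The heart of the matter is the Fourier inversion. For the bookkeeping it is cleanest to combine the two integrations: writing $\mira_H=V\rtimes H_{n-1}$, where $V\subseteq N_H$ is the unipotent radical of the stabiliser in $H$ of the line $\langle e_{2n}\rangle$, one has $N\mira_H=NH_{n-1}$, and since $\psi_N$ is trivial on $N\cap H$ (hence on $N\cap H_{n-1}=N_{H_{n-1}}$) the character $\psi_N^{-1}$ is well defined on $NH_{n-1}$; the combined integral is then $\int_{N_H\bs NH_{n-1}}^{\reg}L(yg)\psi_N^{-1}(y)\,dy$, and the claim is that it returns $L(g)$. I would prove this by induction on $n$, the base case $n=0$ being vacuous: the block of root subgroups of $N$ not contained in the copy of $\GL_{2n-2}$ acting on $\langle e_2,\dots,e_{2n-1}\rangle$ is exchanged, one root subgroup at a time, against the complementary directions by means of a Bruhat relation together with the Fourier inversion formula on $F$, reducing the $\GL_{2n}/\Sp_n$ identity for $L$ to the $\GL_{2n-2}/\Sp_{n-1}$ identity for a suitable tempered transform $L'$ of $L$ on $\Sp_{n-1}\bs\GL_{2n-2}$. (If the induction on $n$ proves awkward, one can instead induct on the number of root subgroups to be exchanged.) All of these manipulations are instances of the regularised exchanges formalised in \cite{MR3431601}*{\S2}; a simplification due to Beuzart-Plessis, noted in the introduction, presumably streamlines precisely this step.

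The main obstacle is the analytic bookkeeping around the regularised integrals, not the formal combinatorics. At every stage one must check that the partial integrals land in the appropriate space of tempered functions so that the next $\int^{\reg}$ is defined — using majorisation by $\Xi^{H\bs G}$, the fact that $\int_{U_H\bs U}\Xi^{H\bs G}(ug)\,du$ is a constant multiple of $\Xi_{\varpi^{-1}}(g)$ coming from Gindikin--Karpelevich (\S\ref{sec: relbasic}), and the estimates of \cite{1812.00047}*{\S2.14--2.15}. One must also justify the Fubini interchange of the $H_{n-1}$-period with the inner regularised $N$-integral; I would do this as in \cite{MR3431601}, by exhibiting an integrable dominating function assembled from these $\Xi$-type bounds. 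Finally, one has to check that each regularised integral agrees with the naive one on the locus where a root-exchange identity is applied, so that the identity transfers. Verifying the decomposition $\mira_H=V\rtimes H_{n-1}$ and matching up the root subgroups against the explicit $J_n$ and the isomorphism $\embd$ is routine linear algebra, done once at the start.
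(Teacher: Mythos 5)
Your overall strategy (interpret the identity as Fourier inversion and prove it by root-exchange in the style of \cite{MR3431601}) is the same as the paper's, but there are two concrete problems with the way you set it up. First, you dismiss precisely the reduction that makes the paper's proof work: after observing that both sides are continuous functionals on $\reltemp(H\bs G)$ (via \S\ref{sec: relbasic} and Lemma \ref{lem: abs conv}), the paper reduces to $g=e$ and to $L\in\swrz(H\bs G)$. Your objection about norm-density is beside the point — the passage is through level-preserving truncations (the regularized $N_M$-integral of a function of fixed level is computed on a fixed compact set, and the $U$-integral is dominated by $\Xi_{\varpi^{-1}}$), and once $L$ is compactly supported the inner integral over $N_H\bs N$, which is \emph{closed} in $H\bs G$, is an honest compactly supported integral. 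Consequently no regularized integral appears anywhere in the core argument. By refusing this step you saddle yourself with an ``all-regularized'' induction, and the difficulties you then list (Fubini against $\int^{\reg}$, matching regularized with naive integrals at each exchange) are exactly the parts you never carry out.

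Second, the heart of the proposition is left as a gesture. The paper writes $\int_{N_H\bs\mira_H}=\int_{\overline{B_{H_{n-1}}}}$, decomposes this into torus factors $\tilde T_k$ and Heisenberg layers $(\overline{V}_k)_H$, introduces the partial integrals $J_k(L;g)$ over $(\Nu_k)_H\bs\Nu_k$ attached to the parabolics of type $(1,\dots,1,2k,1,\dots,1)$, and proves the two identities \eqref{eq: Jkint} and \eqref{eq: Jk'int}: the first rests on the commutator pairing between $C=(\overline{V}_k)_H$ and $D=V_{k+1}\cap(N_M^{\der}\ltimes U)$, which makes $\bar v\mapsto J_k(L;\bar v g)$ compactly supported (\cite{MR3431601}*{Lemma 2.9}) and allows the exchange of unipotent integrations (\cite{MR3431601}*{Lemma A.1}); the second is Fourier inversion on $F$ against $\tilde T_k$, using the invariance \eqref{eq JTk1}. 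Your sketch (``Bruhat relation plus Fourier inversion on $F$, inducting on $n$'') names none of this structure, and your proposed reduction of the $\GL_{2n}/\Sp_n$ identity to the $\Sp_{n-1}\bs\GL_{2n-2}$ identity for a ``tempered transform $L'$'' is not substantiated: the intermediate objects in the paper's chain are not functions on a smaller symmetric space, and proving that your $L'$ is tempered on $\Sp_{n-1}\bs\GL_{2n-2}$ would be a claim of roughly the same order of difficulty as the proposition itself — a difficulty the paper avoids altogether by working with compactly supported $L$, where convergence of every intermediate integral comes from the compact-support lemma rather than from temperedness.
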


\begin{proof}

We may assume that $g=e$.
By continuity, we may also assume that $L\in\swrz(H\bs G)$.
Denote by $\overline{X}$ the image under transpose of a subgroup $X$ of $G$.
Let $B_{H_{n-1}}$ be the Borel subgroup of upper triangular matrices in $H_{n-1}$.
For any left $N_H$-invariant function $f$ on $\mira_H$ we may write
\[
\int_{N_H\bs\mira_H}f(h)\ dh=\int_{\overline{B_{H_{n-1}}}}f(b)\ db
\]
provided that the left-hand side converges.
We decompose the integral over $\overline{B_{H_{n-1}}}$ as follows.
Write $T_H=\prod_{k=1}^n\tilde T_k$ where $\tilde T_k$ is the image of the co-root
\[
\beta_k^\vee(a)=\embd(\left(\begin{smallmatrix}I_{n-k}&&\\&a&\\&&I_{k-1}\end{smallmatrix}\right))^{-1}=
\left(\begin{smallmatrix}I_{n-k}&&&&\\&a&&&\\&&I_{2(k-1)}&&\\&&&a^{-1}&\\&&&&I_{n-k}\end{smallmatrix}\right),\ \ a\in F^*.
\]
Let $V_k$, $k=1,\dots,n$ be the subgroup of $N$ consisting of the elements
$\left(\begin{smallmatrix}I_{n-k}&&\\&u&\\&&I_{n-k}\end{smallmatrix}\right)$
where the middle $2(k-1)\times 2(k-1)$-block of $u$ is $I_{2(k-1)}$.
(Thus, $V_k$ is a Heisenberg group of dimension $4k-3$ while $(V_k)_H$ is a Heisenberg group of dimension $2k-1$.)
Then,
\[
\int_{\overline{B_{H_{n-1}}}}f(b)\ db=
\int_{\tilde T_{n-1}}\int_{(\overline{V}_{n-1})_H}\cdots \int_{\tilde T_1}\int_{(\overline{V}_1)_H}
f(\bar v_1t_1\cdots \bar v_{n-1}t_{n-1})\ d\bar v_1\ dt_1\cdots \ d\bar v_{n-1}\ dt_{n-1}.
\]
For $k=1,\dots,n$ let $\Nu_k=V_{k+1}\cdots V_n$ be the unipotent radical of the parabolic subgroup of $G$
of type $(\overbrace{1,\dots,1}^{n-k},2k,\overbrace{1,\dots,1}^{n-k})$.
Consider the following integrals
\[
J_k(L;g)=\int_{(\Nu_k)_H\bs \Nu_k}L(ug)\psi_N(u)^{-1}\ du, \ \ \ L\in\swrz(H\bs G).
\]

Observe that
\begin{equation}\label{eq JTk1}
J_{k+1}(L;tg)=J_{k+1}(L;g),\ \ \ g\in G, \ t\in \tilde T_k
\end{equation}
since $L$ is left $\tilde T_k$-invariant,
$\tilde T_k$ stabilizes $\psi_N\rest_{\Nu_{k+1}}$ and conjugation by $\tilde T_k$ preserves the Haar measures
on $\Nu_{k+1}$ and $(\Nu_{k+1})_H$.

Also, for $k=1,\dots,n-1$ let $R_k\subset V_{k+1}$ be the one-parameter unipotent group corresponding to the simple root
$\alpha_{n-k}$ in the usual enumeration.
Define
\[
J_k'(L;g)=\int_{R_k}J_{k+1}(L;ug)\psi_N(u)^{-1}\ du, \ \ \ L\in\swrz(H\bs G).
\]

Note that
\[
J_1(L;g)=\int_{(\Nu_1)_H\bs\Nu_1} L(ug)\psi_N(u)^{-1}\ du=\int_{N_H\bs N} L(ug)\psi_N(u)^{-1}\ du,
\]
while $\Nu_n=1$, so that $J_n(L;g)=L(g)$.
The proposition therefore follows from the following two statements, which will be proved below.
\begin{subequations}
\begin{equation} \label{eq: Jkint}
\int_{(\overline{V}_k)_H} J_k(L;\bar vg)\ d\bar v=J_k'(L;g),
\end{equation}
\begin{equation} \label{eq: Jk'int}
\int_{\tilde T_k} J_k'(L;t)\ dt=J_{k+1}(L;e).
\end{equation}
\end{subequations}

Let $C=(\overline{V}_k)_H$ and $D=V_{k+1}\cap (N_M^\der \ltimes U)$
where $N_M^\der$ is the derived group of $N_M$.
It is easy to verify that $[C,D]\subseteq V_{k+1}$ and the map $c\mapsto \psi_N({[c,\cdot}])$ defines a
homeomorphism from $C$ to the Pontryagin dual of $D_H\bs D$.
Therefore the function $f(g)=J_k(L;g)$ satisfies
\[
f(cd)=f([c,d]dc)=\psi_N([c,d])\psi_N(d)f(c).
\]
It follows from \cite{MR3431601}*{Lemma 2.9} that $\bar v\mapsto f(\bar v g)$ is supported on a compact subset of $C$.
The desired equality \eqref{eq: Jkint} is
\begin{align*}
&\int_{(\overline{V}_k)_H}\left(\int_{(\Nu_k)_H\bs \Nu_k} L (u\bar vg)\psi_N(u)^{-1}\ du\right) d\bar v
\\=&\int_{R_k}\int_{(\Nu_{k+1})_H\bs \Nu_{k+1}} L(uvg)\psi_N(u)^{-1}\ du\ \psi_N(v)^{-1}\ dv
\end{align*}
where the left-hand side converges as an iterated integral and the right-hand side converges as a double integral.
This follows from \cite{MR3431601}*{Lemma A.1} (which amounts to Fourier inversion) with $G_0=G$, $H_0=H$, $f=L(\cdot g)$,
$A=\Nu_k$, $B=R_k(V_{k+1})_H\ltimes \Nu_{k+1}$, $C$, $D$ as above and
$\Psi(u\bar u)=\psi_N(u)^{-1}$ for $u\in N$ and $\bar u\in \overline{N}$.
It is straightforward to check that the conditions of [ibid.] are satisfied.

Let $\lambda_k:F\rightarrow R_k$ be the isomorphism whose inverse is the $(n-k,n-k+1)$-coordinate
and let $h\in\swrz(F)$ be defined by
\begin{equation} \label{def: h}
h(x)=J_{k+1}(L;\lambda_k(x)).
\end{equation}
Then by \eqref{eq JTk1} and Fourier inversion we have
\begin{align*}
&\int_{\tilde T_k} J_k'(L;t)\ dt=
\int_{F^*}\big(\int_F J_{k+1}(L;\lambda_k(x)\beta_k^\vee(a))\psi(x)^{-1}\ dx\big)\ d^*a\\
=&\int_{F^*}\big(\int_F h(a^{-1}x)\psi(x)^{-1}\ dx\big)\ d^*a=
\int_F\big(\int_F h(x)\psi(a x)^{-1}\ dx\big)\ da=h(0)=J_{k+1}(L;e),
\end{align*}
hence \eqref{eq: Jk'int}.
\end{proof}

Combined with Lemma \ref{lem: intM}, Corollary \ref{cor: symptozel} and Remark \ref{rem: symp} we can conclude

\begin{corollary} \label{cor: modeltrans}
Let $\pi\in\Irr_{\temp}(G')$ and $\sigma=\Speh(\pi)$. Then,
\begin{enumerate}
\item For any $\varphi\in I_P(\model_{\psi_{N_M}}(\pi\otimes\pi),\varpi)$ we have
\[
\int_{N_H\bs\mira_H}\big(\int_{N_H \bs N}^{\reg} \transs\varphi(uxg)\psi_N^{-1}(u)\ du\big) \ dx=\transs\varphi(g),\ g\in G
\]
where $\transs\varphi$ is defined in \eqref{def: transs}.
\item The functional
\[
\ell_H(W)=\int_{N_H\bs\mira_H}W(h)\ dh
\]
on $\model_{\psi_N}(\sigma)$ is $(\tilde H,\omega_\pi\circ\tilde\lambda)$-equivariant,
and in particular, $H$-invariant.
Moreover, the model transition
\[
\trans_{(N,\psi_N)}^H:\model_{\psi_N}(\sigma)\rightarrow \model_H(\sigma),\ \
W\mapsto\int_{N_H\bs\mira_H}W(h\cdot)\ dh
\]
is the inverse of $\trans_H^{(N,\psi_N)}$.
\item $\transs=\trans_{(N,\psi_N)}^H\circ\tilde\M_\pi$. Thus, Proposition \ref{prop: mainper} holds.
\end{enumerate}
\end{corollary}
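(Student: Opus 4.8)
The plan is to obtain all three parts of the corollary as a purely formal consequence of Proposition~\ref{prop: modeltrans}, Lemma~\ref{lem: intM}, Corollary~\ref{cor: symptozel}, Lemma~\ref{lem: abs conv} and Remark~\ref{rem: symp}, after recording two inclusions of the relevant models into the tempered function spaces of \S\ref{sec: relbasic}. First, $\model_H(\sigma)\subseteq\reltemp(H\bs G)$, which is exactly the first assertion of Lemma~\ref{lem: intM}. Second, $\model_{\psi_N}(\sigma)\subseteq\reltempw_{\varpi^{-1}}(N\bs G;\psi_N)$: since $\model_{\psi_N}(\sigma)$ is the image of $\tilde\M_\pi$, Lemma~\ref{lem: intM} says every $W\in\model_{\psi_N}(\sigma)$ equals $\int_{N_H\bs N}^{\reg}(\transs\varphi)(u\,\cdot\,)\psi_N(u)^{-1}\,du$ for some $\transs\varphi\in\model_H(\sigma)\subseteq\reltemp(H\bs G)$, and the continuous operator $L\mapsto\int_{N_H\bs N}^{\reg}L(u\,\cdot\,)\psi_N(u)^{-1}\,du$ carries $\reltemp(H\bs G)$ into $\reltempw_{\varpi^{-1}}(N\bs G;\psi_N)$ by \S\ref{sec: relbasic}. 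Granting the second inclusion, Lemma~\ref{lem: abs conv} shows at once that $\ell_H(W)=\int_{N_H\bs\mira_H}W(h)\,dh$ is absolutely convergent on $\model_{\psi_N}(\sigma)$ and $\mira_H$-invariant, and, applied to each right translate $R(g)W$, that $\trans_{(N,\psi_N)}^H\colon W\mapsto\int_{N_H\bs\mira_H}W(h\,\cdot\,)\,dh$ is well defined on $\model_{\psi_N}(\sigma)$.

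With these preliminaries, part (1) is immediate: apply Proposition~\ref{prop: modeltrans} to $L=\transs\varphi\in\reltemp(H\bs G)$. For part (3), I would run the same identity but rewrite its inner regularized integral by the second assertion of Lemma~\ref{lem: intM}, namely $\int_{N_H\bs N}^{\reg}(\transs\varphi)(uy)\psi_N(u)^{-1}\,du=(\tilde\M_\pi\varphi)(y)$; putting $y=xg$, Proposition~\ref{prop: modeltrans} becomes $\int_{N_H\bs\mira_H}(\tilde\M_\pi\varphi)(xg)\,dx=(\transs\varphi)(g)$, i.e. $\trans_{(N,\psi_N)}^H\circ\tilde\M_\pi=\transs$. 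Evaluating at $g=e$ and combining $\transs\varphi(e)=\ell_H^{\ind}(\varphi)$ (from \eqref{def: transs}) with $\transs\varphi(e)=\bigl(\trans_{(N,\psi_N)}^H\tilde\M_\pi\varphi\bigr)(e)=\ell_H(\tilde\M_\pi\varphi)$ then gives $\ell_H^{\ind}=\ell_H\circ\tilde\M_\pi$, which is Proposition~\ref{prop: mainper}.

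For the remaining claims of part (2), I would feed $L\in\model_H(\sigma)$ into Proposition~\ref{prop: modeltrans} to get $\trans_{(N,\psi_N)}^H\circ\trans_H^{(N,\psi_N)}=\id$ on $\model_H(\sigma)$; since Corollary~\ref{cor: symptozel} already provides that $\trans_H^{(N,\psi_N)}\colon\model_H(\sigma)\to\model_{\psi_N}(\sigma)$ is a bijective model transition, this forces $\trans_{(N,\psi_N)}^H=(\trans_H^{(N,\psi_N)})^{-1}$, and in particular $\trans_{(N,\psi_N)}^H$ maps $\model_{\psi_N}(\sigma)$ onto $\model_H(\sigma)$. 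Finally, for the $(\tilde H,\omega_\pi\circ\tilde\lambda)$-equivariance of $\ell_H$ on $\model_{\psi_N}(\sigma)$: given $W\in\model_{\psi_N}(\sigma)$ write $W=\tilde\M_\pi\varphi$ using surjectivity of the $G$-morphism $\tilde\M_\pi$, so that $\sigma(\tilde h)W=\tilde\M_\pi(I_P(\tilde h,\varpi)\varphi)$ for $\tilde h\in\tilde H$; then by part (3) and \eqref{def: transs},
\[
\ell_H(\sigma(\tilde h)W)=\bigl(\transs(I_P(\tilde h,\varpi)\varphi)\bigr)(e)=\ell_H^{\ind}(I_P(\tilde h,\varpi)\varphi),
\]
and Remark~\ref{rem: symp} identifies the last expression with $(\omega_\pi\circ\tilde\lambda)(\tilde h)\,\ell_H^{\ind}(\varphi)=(\omega_\pi\circ\tilde\lambda)(\tilde h)\,\ell_H(W)$. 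Since $\tilde\lambda$ is trivial on $H$, this specializes to the $H$-invariance of $\ell_H$.

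I expect no genuine obstacle here: the corollary is a bookkeeping combination of results already in hand, all the analytic content (convergence of $\ell_H$, existence of the regularized integrals, and the Fourier-inversion identity behind Proposition~\ref{prop: modeltrans}) having been discharged earlier. The only place needing a little care is verifying the two ambient inclusions $\model_H(\sigma)\subseteq\reltemp(H\bs G)$ and $\model_{\psi_N}(\sigma)\subseteq\reltempw_{\varpi^{-1}}(N\bs G;\psi_N)$, and keeping track of the surjectivity of $\tilde\M_\pi$ and of $\transs$ onto their respective models, so that Lemmas~\ref{lem: abs conv} and \ref{lem: intM} and Proposition~\ref{prop: modeltrans} all apply as stated.
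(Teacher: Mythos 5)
Your proposal is correct and follows essentially the same route as the paper, which deduces the corollary precisely by combining Proposition \ref{prop: modeltrans} (applied to $L=\transs\varphi$, legitimate since $\model_H(\sigma)\subseteq\reltemp(H\bs G)$ by Lemma \ref{lem: intM}) with the second assertion of Lemma \ref{lem: intM}, Corollary \ref{cor: symptozel}, Lemma \ref{lem: abs conv} and Remark \ref{rem: symp}. Your filling-in of the details — the inclusion $\model_{\psi_N}(\sigma)\subseteq\reltempw_{\varpi^{-1}}(N\bs G;\psi_N)$ via the image of $\tilde\M_\pi$, the one-sided identity forcing $\trans_{(N,\psi_N)}^H=(\trans_H^{(N,\psi_N)})^{-1}$, and transporting the equivariance of $\ell_H^{\ind}$ through $\tilde\M_\pi$ — is exactly the bookkeeping the paper leaves implicit.
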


\section{Completion of proof}

In this section we prove Theorem \ref{thm: main}.

\subsection{}
The first step is to take $\varphi_i=\int_Hf_i(h\cdot)\ dh$, $i=1,2$, for which the putative relation \eqref{eq: inner} becomes
\begin{equation} \label{eq: intH}
\int_Hf(h)\ dh=\int_{\Irr_{\temp}(G')}\Bes_{\data_{\psi_N}(\Speh(\pi))}^{\ell_H,\ell_H}(f^\vee)\ d\mu_{\pl}(\pi)
\end{equation}
where $f=\overline{f_2}*f_1^\vee$.
Thus, we need to show \eqref{eq: intH} for any $f\in\swrz(G)$.

Next, we use an inversion formula for the left-hand side.
As in \cite{1812.00047}*{\S2.14} define
\[
W^{G'}_{f'}(g_1,g_2)=\int_{N'}f'(g_1^{-1}ug_2)\psi_{N'}(u)^{-1}\ du,\ \ g_1,g_2\in G'
\]
for any $f'\in\swrz(G')$. Similarly, let
\[
W_f(g_1,g_2)=\int_Nf(g_1^{-1}ug_2)\psi_N(u)^{-1}\ du=\int_{N_M}\int_Uf(g_1^{-1}vug_2)\ du\ \psi_{N_M}(v)^{-1}\ dv, \ \ g_1,g_2\in G.
\]
Thus, $W_f(u_1g_1,u_2g_2)=\psi_N(u_1^{-1}u_2)W_f(g_1,g_2)$ for any $u_1,u_2\in N$, $g_1,g_2\in G$.

The first part of the following lemma is an analogue of \cite{1812.00047}*{Proposition 4.3.1}.

\begin{lemma}[local unfolding] \label{lem: inversion}
For any $f\in\swrz(G)$ we have
\begin{equation} \label{eq: BPinv}
\int_Hf(h)\ dh=\int_{N_H\bs \mira_H}\int_{N_H\bs H}W_f(h,q)\ dh\ dq
\end{equation}
where the right-hand side converges as an iterated integral. Moreover,
\[
\int_{N_H\bs H}W_f(h,g)\ dh=W_{\trns (R(g)f)}^{G'}(e,e), \ \ \ g\in G.
\]
(see \eqref{def: trns}).
\end{lemma}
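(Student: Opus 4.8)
The plan is to establish the two assertions of Lemma~\ref{lem: inversion} in turn, starting from the second (the ``moreover'') since it feeds into the proof of the first. Set $\varphi=\int_H f(h\cdot)\,dh\in\swrz(H\bs G)$, so that by definition $\trns(R(g)f)(g')=\abs{\det g'}^{\frac{1-n}2}\int_{U_H\bs U}\varphi(u\crnr(g')g)\,du$. Unwinding $W^{G'}_{\trns(R(g)f)}(e,e)=\int_{N'}\trns(R(g)f)(u')\psi_{N'}(u')^{-1}\,du'$ and using that $\crnr$ identifies $N'$ with $N_{M_H}\bs N_M\cap\mira$ (more precisely, that the integral over $N'$ together with the integral over $U_H\bs U$ reassembles an integral over $N_H\bs N$, since $N=N_M\ltimes U$, $N_H=N_{M_H}\ltimes U_H$, and $\abs{\det}$ on $\crnr(N')$ is trivial), one gets $W^{G'}_{\trns(R(g)f)}(e,e)=\int_{N_H\bs N}\varphi(ug)\,du$. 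On the other hand $\int_{N_H\bs N}\varphi(ug)\,du=\int_{N_H\bs N}\int_H f(hug)\,dh\,du$, and after writing $\int_H=\int_{N_H\bs H}\int_{N_H}$ and absorbing the inner $N_H$-integral into the $N_H\bs N$ one obtains $\int_{N_H\bs N}\varphi(ug)\,du=\int_{N_H\bs H}W_f(h,g)\,dh$. Absolute convergence of the $N_H\bs N$ integral of $\varphi$ is exactly the content of the matrix-coefficient/temperedness estimates already invoked (via Lemma~\ref{lem: abs conv} applied to $\swrz$-functions, or directly the compact support of $\varphi$), so the manipulations are legitimate.

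For the identity \eqref{eq: BPinv} itself, the strategy is the standard ``unfolding'' argument in the style of \cite{1812.00047}*{Proposition 4.3.1}. Start from the right-hand side: $\int_{N_H\bs\mira_H}\int_{N_H\bs H}W_f(h,q)\,dh\,dq$. Expanding $W_f(h,q)=\int_N f(h^{-1}uq)\psi_N(u)^{-1}\,du$ and using $W_f(u_1h,u_2q)=\psi_N(u_1^{-1}u_2)W_f(h,q)$ to collapse the quotients, one rewrites the inner integral $\int_{N_H\bs H}W_f(h,q)\,dh$ as an integral over $H\times N$ modulo the appropriate diagonal-type identifications, i.e.\ essentially $\int_H\int_{N_H\bs N}f(h^{-1}uq)\psi_N(u)^{-1}\,du\,dh$; the key point is that the character $\psi_N$ is trivial on $N_H$, so that integrating $h$ over all of $H$ (not just $N_H\bs H$) against the $N_H$-invariant kernel makes sense after the quotient is taken, and the $\psi_N$ twist disappears on the relevant pieces. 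Then performing the remaining $q$-integration over $N_H\bs\mira_H$ and recognizing that $N_H\backslash\mira_H$ together with $N_H\bs N$ and $H$ reconstitute $G$ via the relevant Iwasawa-type/Bruhat-type decomposition tied to the fact that $\psi_N$ is nondegenerate on $N_M$ and trivial on $U(N_H)$, the whole expression telescopes to $\int_H f(h)\,dh$. Convergence as an iterated integral follows from the second assertion combined with \cite{1812.00047}*{Lemma 2.15.1} (the outer $q$-integral over $N_H\bs\mira_H$ converges because $W^{G'}_{\trns(R(q)f)}(e,e)$, as a function of $q$, is a tempered-type function on $N_H\bs\mira_H$ with the requisite decay, exactly as in the cited Beuzart-Plessis argument).

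The main obstacle I expect is the bookkeeping in the unfolding step: keeping track of which unipotent integrations are genuinely convergent versus which only make sense after a quotient is taken, and verifying that the character $\psi_N$ behaves correctly (trivial on $N_H$ and on $U_H$, nondegenerate on $N_M$) at each stage so that no spurious twist survives. The cleanest route is probably to prove the ``moreover'' part first, as above, and then deduce \eqref{eq: BPinv} by substituting it into the right-hand side: the right-hand side becomes $\int_{N_H\bs\mira_H}W^{G'}_{\trns(R(q)f)}(e,e)\,dq$, and one must then show this equals $\int_H f(h)\,dh$. For that, note $W^{G'}_{\trns(R(q)f)}(e,e)=\int_{N_H\bs N}\varphi(uq)\,du$ with $\varphi=\int_H f(h\cdot)\,dh$, so the integral over $q\in N_H\bs\mira_H$ of this is $\int_{N_H\bs\mira_H}\int_{N_H\bs N}\varphi(uq)\,du\,dq$; this is precisely $\int_{N_H\bs\mira_H}(\trans^{(N,\psi_N)}_H$-type unfolding$)$, and by Proposition~\ref{prop: modeltrans} applied to $L=\varphi$ (which lies in $\reltemp(H\bs G)$, indeed in $\swrz(H\bs G)$) evaluated at $g=e$, it collapses to $\varphi(e)=\int_H f(h)\,dh$. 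This reduces the whole lemma to the already-established Proposition~\ref{prop: modeltrans} plus the elementary identity in the first paragraph, and the convergence statements come along for free from the continuity assertions in \S\ref{sec: relbasic}.
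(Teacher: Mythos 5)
Your final (``cleanest'') route is essentially the paper's own proof: you first identify $\int_{N_H\bs H}W_f(h,g)\,dh$ with $\int_{N_H\bs N}\varphi(ug)\psi_N(u)^{-1}\,du=W^{G'}_{\trns(R(g)f)}(e,e)$ (justified by the compact support of $\varphi$ in $H\bs G$, the closedness of $N_H\bs N$ in $H\bs G$, and the splitting of $N_H\bs N$ into $(N_{M_H}\bs N_M)\times(U_H\bs U)$ with $N_{M_H}\bs N_M\simeq N'$), and then \eqref{eq: BPinv} reduces to Proposition \ref{prop: modeltrans} applied to $L=\varphi\in\swrz(H\bs G)$ at $g=e$, exactly as in the paper. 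The only blemish is that in two displays you wrote $\int_{N_H\bs N}\varphi(ug)\,du$ without the twist $\psi_N(u)^{-1}$, which must be kept (it is precisely what Proposition \ref{prop: modeltrans} requires); since the character appears correctly elsewhere in your argument, this is a notational slip rather than a gap.
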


\begin{remark}
As in \cite{1812.00047}, it can be shown that the right-hand side of \eqref{eq: BPinv} converges as a double integral.
We will not give details since we will not need to use this fact.
\end{remark}

\begin{proof}
Let $\varphi=\int_Hf(h\cdot)\ dh$.
Then,
\begin{align*}
&\int_{N_H\bs H}W_f(h,g)\ dh=\int_{N_H\bs H}\int_Nf(h^{-1}ug)\psi_N(u)^{-1}\ du\ dh
\\=&\int_{N_H\bs H}\int_{N_H\bs N}\int_{N_H}f(h^{-1}vug)\psi_N(u)^{-1}\ dv\ du\ dh
\\=&\int_{N_H\bs N}\int_{N_H\bs H}\int_{N_H}f(h^{-1}vug)\psi_N(u)^{-1}\ dv\ dh\ du
\\=&\int_{N_H\bs N}\int_Hf(h^{-1}ug)\psi_N(u)^{-1}\ dh\ du=
\int_{N_H\bs N}\varphi(ug)\psi_N(u)^{-1}\ du.
\end{align*}
This is justified since $g\mapsto\int_H\abs{f(h^{-1}g)}\ dh$ is compactly supported in $H\bs G$
and $N_H\bs N$ is closed in $H\bs G$.
Therefore, the identity \eqref{eq: BPinv} becomes
\[
\int_{N_H\bs \mira_H}\big(\int_{N_H\bs N}\varphi(uh)\psi_N(u)^{-1}\ du\big)\ dh=\varphi(e),
\]
which follows from Proposition \ref{prop: modeltrans}.

For the second part, we may assume that $g=e$. As above, we write
\begin{align*}
&\int_{N_H\bs H}W_f(h,e)\ dh=\int_{N_H\bs N}\int_Hf(hu)\psi_N(u)^{-1}\ dh\ du
\\=&\int_{N_{M_H}\bs N_M}\int_{U_H\bs U}\int_Hf(huv)\psi_{N_M}(v)^{-1}\ dh\ du\ dv
=\int_{N'}\trns f(v)\psi_{N'}(v)^{-1}\ dv
\\=&W_{\trns f}^{G'}(e,e)
\end{align*}
as required.
\end{proof}

Note that the second part of Lemma \ref{lem: inversion} is elementary, in contrast to the much more involved
analogous step in \cite{1812.00047}.

\subsection{}
Next, we recall the Whittaker spectral expansion for $G'$, explicated in Proposition 2.14.2 and (2.14.3) of \cite{1812.00047}
(see also \cite{1812.00047}*{\S2.8}).

\begin{proposition} \label{prop: specexpGLn}
For any $f\in \swrz(G')$
\[
W_f^{G'}(e,e)=\int_{\Irr_{\temp}(G')}\Bes_{\data_{\psi_{N'}}(\pi)}^{\dirac_e,\dirac_e}(f^\vee)\ d\mu_{\pl}(\pi)
=\int_{\Irr_{\temp}(G')}\Bes_{\data_{\psi_{N'}^{-1}}(\pi)}^{\dirac_e,\dirac_e}(f)\ d\mu_{\pl}(\pi)
\]
(see Lemma \ref{lem: beselem}). Moreover, the integral
\[
\int_{N'\bs\mira'^*}\int_{\pi\in\Irr_{\temp}(G')}\abs{\Bes_{\data_{\psi_{N'}^{-1}}(\pi)}^{\dirac_m,\dirac_m}(f)}\ d\mu_{\pl}(\pi)\ dm
\]
converges.
\end{proposition}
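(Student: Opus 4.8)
The statement I want to establish is Proposition~\ref{prop: specexpGLn}, the Whittaker spectral expansion for $G'=\GL_n(F)$. The first and main point is that this is \emph{not} a new result: it is precisely the content of \cite{1812.00047}*{Proposition 2.14.2 and (2.14.3)}, so the ``proof'' is essentially a matter of quoting the reference and reconciling notation. Concretely, Beuzart-Plessis establishes an explicit Plancherel-type inversion formula for the Whittaker function $W_f^{G'}(e,e)=\int_{N'}f(u)\psi_{N'}(u)^{-1}\ du$, expanding it against the Bessel distributions attached to tempered irreducible representations. First I would recall the definition $\Bes_{\data_{\psi_{N'}}(\pi)}^{\dirac_e,\dirac_e}(f^\vee)=\dirac_e(\dirac_e\circ\pi(f^\vee))$ from our \S\ref{sec: besidn}, and check that it agrees (for our normalization of Haar measures, following \cite{1812.00047}*{\S2.5}) with the Bessel distribution appearing in [ibid.]. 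The two displayed formulas for $W_f^{G'}(e,e)$ are interchanged by Lemma~\ref{lem: beselem}(1): applying it to $\data=\data_{\psi_{N'}}(\pi)$, whose companion $\data^\circ$ is isomorphic to $\data_{\psi_{N'}^{-1}}(\tilde\pi)$, converts $\Bes_{\data_{\psi_{N'}}(\pi)}^{\dirac_e,\dirac_e}(f^\vee)$ into $\Bes_{\data_{\psi_{N'}^{-1}}(\tilde\pi)}^{\dirac_e,\dirac_e}(f)$, and then one reindexes the integral over $\Irr_{\temp}(G')$ by $\pi\mapsto\tilde\pi$, using that $\mu_{\pl}$ and the formation of the Bessel distribution are compatible with taking contragredients.

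The only part that requires genuine (if mild) work beyond citation is the absolute convergence assertion: that
\[
\int_{N'\bs\mira'^*}\int_{\pi\in\Irr_{\temp}(G')}\bigl|\Bes_{\data_{\psi_{N'}^{-1}}(\pi)}^{\dirac_m,\dirac_m}(f)\bigr|\ d\mu_{\pl}(\pi)\ dm<\infty,
\]
where $\mira'^*$ denotes the mirabolic with the twisted action. By Lemma~\ref{lem: beselem}(3) one has $\Bes_{\data_{\psi_{N'}^{-1}}(\pi)}^{\dirac_m,\dirac_m}(f)=\Bes_{\data_{\psi_{N'}^{-1}}(\pi)}^{\dirac_e\circ\pi(m),\dirac_e\circ\pi^\vee(m)}(f)=\Bes_{\data_{\psi_{N'}^{-1}}(\pi)}^{\dirac_e,\dirac_e}(L(m)R(m^{-1})f)$ (or the appropriate twisted analogue), so the inner integral is, up to a translate of $f$, the same Bessel distribution evaluated at a point $m$. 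The plan here is to bound $|\Bes_{\data_{\psi_{N'}^{-1}}(\pi)}^{\dirac_e,\dirac_e}(R(g)f)|$ by a fixed Schwartz function on $N'\bs\mira'^*$ times an $L^1(\mu_{\pl})$ function of $\pi$. This is exactly the kind of estimate that underlies the uniform convergence in the Whittaker–Plancherel formula; it follows from the Harish-Chandra–Plancherel bounds for matrix coefficients together with the rapid decay of the Whittaker function on $N'\bs\mira'^*$ (the classical ``gauge estimate'', e.g.\ \cite{1812.00047}*{Lemma 2.15.1} or the discussion around it). One runs the argument of \cite{1812.00047}*{\S2.14}: fix a compact open $K$ so that $f$ is bi-$K$-invariant, note that the integrand is supported on the (finite-volume relative to $\mu_{\pl}$) set of $\pi$ with a $K$-fixed vector and bounded conductor, and estimate $\Bes$ on that set using the trace-class norm of $\pi(R(g)f)$ on $V^K$, which by Casselman–Wallach/Bernstein type bounds is dominated by $\|f\|\cdot(1+\|g\|)^{-A}$ for every $A$ after integrating against the formal degree.

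I expect the \textbf{main obstacle} to be purely expository: matching the exact normalizations (Haar measures on $N'$, $\mira'$, $G'$, the choice of $\psi$, and the precise meaning of ``$\dirac_e$ in the Whittaker model'') between our setup and \cite{1812.00047}, so that the cited identities transfer verbatim rather than up to an unknown constant. There is no conceptual difficulty: once the dictionary is fixed, the first two displays are a direct quotation combined with the formal manipulation in Lemma~\ref{lem: beselem}(1), and the convergence statement is the uniform-convergence clause of that same reference, reinterpreted through Lemma~\ref{lem: beselem}(3). Accordingly, the write-up will be short: state the identification of Bessel distributions, invoke \cite{1812.00047}*{Proposition 2.14.2, (2.14.3)} and the surrounding convergence estimates, and close by noting the contragredient symmetry that yields the second form of the expansion.
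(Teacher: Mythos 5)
Your proposal matches the paper's treatment: the paper gives no independent proof of this proposition but simply recalls it as Proposition 2.14.2 and (2.14.3) of \cite{1812.00047}, exactly as you do, with the passage between the two displayed forms being the routine contragredient manipulation via Lemma \ref{lem: beselem}. Your extra remarks on normalizations and on translating $\dirac_m$ via part (3) of that lemma are harmless elaborations of the same citation-based argument.
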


Combining Lemma \ref{lem: inversion} and Proposition \ref{prop: specexpGLn} we get
\[
\int_Hf(h)\ dh=
\int_{N_H\bs \mira_H}\int_{\pi\in\Irr_{\temp}(G')}\Bes_{\data_{\psi_{N'}^{-1}}(\pi)}^{\dirac_e,\dirac_e}
(\trns(R(q)f))\ d\mu_{\pl}(\pi)\ dq.
\]
We claim that the double integral on the right-hand side converges. Indeed, we can write it as
\[
\int_{P_H\cap \mira_H\bs \mira_H}\int_{N_{M_H}\bs M_H\cap \mira_H}\int_{\pi\in\Irr_{\temp}(G')}
\modulus_{P_H\cap \mira_H}^{-1}(m)
\Bes_{\data_{\psi_{N'}^{-1}}(\pi)}^{\dirac_e,\dirac_e}(\trns(R(m)R(h)f))\ d\mu_{\pl}(\pi)\ dm\ dh.
\]
Since $\modulus_{P_H\cap \mira_H}(\embd(m))=\abs{\det m}^{-n}$ for any $m\in\mira'$ and
\[
\trns(R(\embd(m))f)(g)=\abs{\det m}^{1-n}\trns f((m^*)^{-1}gm^*),\ \ m,g\in G',
\]
we get by Lemma \ref{lem: beselem}
\[
\int_{\K\cap\mira_H}\int_{N'\bs\mira'}\int_{\pi\in\Irr_{\temp}(G')}\abs{\det m}
\Bes_{\data_{\psi_{N'}^{-1}}(\pi)}^{\dirac_{m^*},\dirac_{m^*}}(\trns(R(k)f))\ d\mu_{\pl}(\pi)\ dm\ dk.
\]
Therefore, the claim follows from the second part of Proposition \ref{prop: specexpGLn} (applied to
$\trns(R(k)f)\in\swrz(G')$) since $\abs{\det m}$ is bounded on the support of the function
\[
m\mapsto\Bes_{\data_{\psi_{N'}^{-1}}(\pi)}^{\dirac_{m^*},\dirac_{m^*}}(\trns(R(k)f)),\ \ m\in\mira',
\]
which in turn follows from \eqref{eq: besOB} and the fact that for any compact open subgroup $K'$ of
$G'$ and any $W'\in\model_{\psi_{N'}}(\pi)^{K'}$, $\abs{\det m}$ is bounded on the support of
$m\mapsto W'(m^*)$, $m\in\mira'$, in terms of $K'$ only.

Interchanging the order of integration we get
\[
\int_Hf(h)\ dh=
\int_{\pi\in\Irr_{\temp}(G')}\int_{N_H\bs \mira_H}\Bes_{\data_{\psi_{N'}^{-1}}(\pi)}^{\dirac_e,\dirac_e}
(\trns(R(q)f))\ dq\ d\mu_{\pl}(\pi)
\]
which by Theorem \ref{thm: cmprbes} and Lemma \ref{lem: beselem} is equal to
\begin{align*}
&\int_{\pi\in\Irr_{\temp}(G')}\int_{N_H\bs \mira_H}
\Bes_{\data_{\psi_N^{-1}}(\Speh(\pi))}^{\ell_H,\dirac_e}(R(q)f)\ dq\ d\mu_{\pl}(\pi)
\\=&\int_{\pi\in\Irr_{\temp}(G')}\int_{N_H\bs \mira_H}
\Bes_{\data_{\psi_N^{-1}}(\Speh(\pi))}^{\ell_H,\dirac_q}(f)\ dq\ d\mu_{\pl}(\pi)
\\=&\int_{\pi\in\Irr_{\temp}(G')}\Bes_{\data_{\psi_N^{-1}}(\Speh(\pi))}^{\ell_H,\ell_H}(f)\ d\mu_{\pl}(\pi)
=\int_{\pi\in\Irr_{\temp}(G')}\Bes_{\data_{\psi_N}(\Speh(\pi))}^{\ell_H,\ell_H}(f^\vee)\ d\mu_{\pl}(\pi).
\end{align*}
All in all, this gives the relation \eqref{eq: intH}, which is equivalent to Theorem \ref{thm: main}.

\subsection{Symplectic similitude group}
From Theorem \ref{thm: main} we can immediately deduce a variant for $\tilde H=\GSp_n$.
Fix a character $\chi$ of $F^*$.
For any $\varphi\in\swrz(H\bs G)$ let
\[
\tilde\varphi(x)=\int_{H\bs\tilde H}\varphi(tx)\chi(\tilde\lambda(t))\ dt.
\]
Then, $\tilde\varphi\in\swrz(\tilde H\bs G;\chi^{-1})$ and for any $\varphi_1,\varphi_2\in\swrz(H\bs G)$ we have
\[
(\tilde\varphi_1,\tilde\varphi_2)_{L^2(\tilde H\bs G;\chi^{-1})}=
\int_{H\bs\tilde H}(\varphi_1(t\cdot),\varphi_2)_{L^2(H\bs G)}\chi(\tilde\lambda(t))\ dt.
\]
On the other hand, it follows from Corollary \ref{cor: modeltrans} that
for any $\pi\in\Irr_{\temp}(G')$, $\varphi_1,\varphi_2\in\swrz(H\bs G)$ and $t\in\tilde H$ we have
\[
(\varphi_1(t\cdot),\varphi_2)_\sigma=\omega_\pi(\tilde\lambda(t))^{-1}(\varphi_1,\varphi_2)_\sigma
\]
where $\sigma=\Speh(\pi)$. Therefore, by Theorem \ref{thm: main}
\[
(\tilde\varphi_1,\tilde\varphi_2)_{L^2(\tilde H\bs G;\chi^{-1})}=
\int_{H\bs\tilde H}\big(\int_{\Irr_{\temp}(G')}(\varphi_1,\varphi_2)_{\Speh(\pi)}\omega_\pi^{-1}(\tilde\lambda(t))
\ d\mu_{\pl}(\pi)\big)\chi(\tilde\lambda(t))\ dt
\]
where the integral converges as an iterated integral.
By \eqref{eq: defmuchi}, this is equal to
\[
\int_{\Irr_{\temp}^\chi(G')}(\varphi_1,\varphi_2)_{\Speh(\pi)}\ d\mu_{\pl}^\chi(\pi).
\]
This concludes the proof of Theorem \ref{thm: gsp}.

\subsubsection*{Acknowledgement}
The paper owes a great deal to Herv\'e Jacquet who kindly shared with us his input on the problem some time ago.
We thank him for his encouragement and inspiration along the years.
We also owe an intellectual debt to Rapha\"{e}l Beuzart-Plessis for his paper \cite{1812.00047}
that influenced the present paper decisively.
We also thank him for suggesting a simplification in the argument of \S\ref{sec: proofmain}.
We thank Dmitry Gourevitch for providing the appendix below explaining how to remove
the characteristic $0$ assumption in \cite{1806.10528}.
Finally, we thank Zhengyu Mao for useful discussions.

\appendix

\newcommand{\Rep}{\operatorname{Rep}}
\newcommand{\diag}{\operatorname{diag}}
\newcommand{\Q}{{\mathbb Q}}
\newcommand{\R}{{\mathbb R}}
\newcommand{\fg}{{\mathfrak{g}}}
\newcommand{\fn}{{\mathfrak{n}}}
\newcommand{\fu}{{\mathfrak{u}}}
\newcommand{\gl}{{\mathfrak{gl}}}
\newcommand{\cW}{\mathcal{W}}
\newcommand{\onto}{{\twoheadrightarrow}}
\newcommand{\depth}{\operatorname{depth}}

\section{Generalized Whittaker and Zelevinsky models for the general linear group, by Dmitry Gourevitch\texorpdfstring{\footnote{Faculty of Mathematics
and Computer Science, Weizmann Institute of Science, POB 26, Rehovot 76100, Israel
\\{\email{dmitry.gourevitch@weizmann.ac.il}}}}{}}

The goal of this appendix is to recall some results about various models for irreducible representations
of general linear groups over a local non-Archimedean field $F$.
The principal model is the one considered by Zelevinsky in \cite{MR584084} under the name ``degenerate Whittaker model''.
Subsequently, a vast generalization of these models was considered by M\oe glin--Waldspurger \cite{MR913667}
(for any reductive group where $F$ is of characteristic $0$)
who relate the dimension of these models to coefficients in the Harish--Chandra germ expansion of the character of the representation.
More recently, a different point of view was taken in \cite{MR3705224} where different models of the same representation are compared directly.
This is particularly nice in the case of the general linear group, for which there is no restriction on the characteristic of $F$.
The situation in the Archimedean case will also be explained.

\subsection{}
Let $F$ be a non-Archimedean local field of arbitrary characteristic.
Let $n$ be a positive integer and define $G:=\GL_n(F)$ and  $\fg:=\gl_n(F)$.
Let $\lambda$ be a partition of $n$, i.e., a finite sequence of integers
$\lambda_1\geq \lambda_2\geq \dots,\geq \lambda_l>0$ with $\sum_{i=1}^l\lambda_i=n$.
In this appendix we establish isomorphisms between several degenerate Whittaker models corresponding to $\lambda$.
To define these models we will need some notation.

Let $J_\lambda:=\{n-\sum_{i=1}^j\lambda_i\, \vert \, 1\le j<l\}$ and $J'_\lambda:=\{1,\dots ,n-1\}\setminus J_\lambda$.\\
Define $f_\lambda\in\fg$ by $f_\lambda=\sum_{i\in J'_\lambda}E_{i+1,i}$, where $E_{ij}$ denote the elementary matrices.

Let $\fn\subset \fg$ be the maximal nilpotent Lie subalgebra consisting of strictly upper triangular matrices.
Define a subalgebra $\fn_\lambda\subset \fn$ by
\[
\fn_\lambda=\{ A\in \fn\, \vert \, A_{i,j+1}=0 \text{ for every }j\in J_\lambda \text{ and every }i\}.
\]
Define $h_\lambda\in \gl_n(\Z)$ to be the diagonal matrix
\[
h_\lambda=\diag(\lambda_l-1,\lambda_l-3,\dots,1-\lambda_l,\dots,\lambda_1-1,\dots,1-\lambda_1).
\]
Note that there exists $e_\lambda\in \gl_n(\Q)$ such that $(e_\lambda,h_\lambda,f_\lambda)$ is an $\mathfrak{sl}_2$-triple.
Let $\gl_n(\Z)(i)$ denote the $i$-eigenspace of the adjoint action of $h_\lambda$, and let $\fg(i)$ be the corresponding subspace
of $\fg$ under the isomorphism $\fg\cong \gl_n(\Z)\otimes_{\Z}F$.
Define a nilpotent subalgebra $\fu_\lambda\subset \fg$ by
\[
\fu_\lambda=(\fg(1)\cap \fn)\oplus \bigoplus_{i\geq 2}\fg(i).
\]
In fact, $\fu_\lambda$ is the nilradical of a (non-standard) parabolic subgroup corresponding to the partition conjugate to  $\lambda$.

Let $N,N_\lambda,U_\lambda\subset G$ be the unipotent subgroups with Lie subalgebras $\fn,\fn_\lambda,\fu_\lambda\subset \fg$ given by
\[
N=\{\Id +X\,\vert \, X\in \fn\}, \quad N_\lambda=\{\Id +X\,\vert \,
X\in \fn_\lambda\}, \quad \text{and }\,U_\lambda=\{\Id +X\,\vert \, X\in \fu_\lambda\}.
\]
Fix a continuous, non-trivial, additive character $\psi$ of $F$.
For each of the groups $R=N,N_\lambda,U_\lambda$ define a character $\psi_{R,\lambda}$ on $R$ by
\[
\psi_{R,\lambda}(\Id+X)=\psi(\tr(f_\lambda X))
\]
and consider
\[
\cW_{\psi_{R,\lambda}}:=\ind_R^G\psi_{R,\lambda}
\]
where $\ind$ denotes compact induction.
The dual space of $\cW_{\psi_{R,\lambda}}$ is $\Ind_R^G\psi_{R,\lambda}^{-1}$.

\begin{theorem}[\cite{MR3705224}*{Theorem F}] \label{thm:RE}
We have an isomorphism $\cW_{\psi_{N_\lambda,\lambda}}\simeq \cW_{\psi_{U_\lambda,\lambda}}$.
\end{theorem}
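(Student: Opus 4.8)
The plan is to realize the isomorphism by a sequence of \emph{root exchanges}, organized by the eigenvalues of $\operatorname{ad}h_\lambda$. The underlying tool is the familiar root-exchange lemma (Fourier expansion along an abelian unipotent quotient; see, e.g., \cite{MR913667,MR3705224}): under suitable commutator hypotheses, a unipotent subgroup $X$ on which the ambient character is trivial may be replaced by a ``dual'' subgroup $Y$, provided the commutator pairing $(x,y)\mapsto\psi_R([x,y])$ identifies $X$ with the Pontryagin dual of $Y$; the resulting isomorphism of the compactly induced representations is an explicit integral transform, not a restriction of functions.

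Granting this, I would argue as follows. A short combinatorial check shows that $\fn_\lambda$ and $\fu_\lambda$ share a common Lie subalgebra containing $\fg(\ge 2)\cap\fn$, on which both $\psi_{N_\lambda,\lambda}$ and $\psi_{U_\lambda,\lambda}$ restrict to $\Id+X\mapsto\psi(\tr(f_\lambda X))$; the discrepancy between the two data is therefore confined to the low $\operatorname{ad}h_\lambda$-degrees, the relevant entries being the upper-triangular ones of nonpositive degree allowed by the column condition defining $\fn_\lambda$ on one side, and the lower-triangular parts of $\bigoplus_{i\ge 2}\fg(i)$ together with $\fg(1)\cap\fn$ on the other. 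Working from the top degree downward, I would eliminate this discrepancy by root exchanges: at each step the ``extra'' root subgroups present on one side are traded for their partners under the pairing $(A,B)\mapsto\psi(\tr(f_\lambda[A,B]))$. Verifying the hypotheses of the exchange lemma at each step reduces to properties of the maps $\operatorname{ad}f_\lambda\colon\fg(i)\to\fg(i-2)$ coming from the $\mathfrak{sl}_2$-triple $(e_\lambda,h_\lambda,f_\lambda)$ --- essentially, injectivity in positive degrees (so that the commutator pairings are perfect) and control of the image in each degree (so that the successive subgroups remain unipotent with the correct characters). After finitely many exchanges the datum $(N_\lambda,\psi_{N_\lambda,\lambda})$ is transported to $(U_\lambda,\psi_{U_\lambda,\lambda})$, which gives the asserted isomorphism.

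I expect the main obstacle to be twofold. First, there is genuine bookkeeping: one must pin down the precise chain of intermediate subgroups and check, at every stage, the commutator containments, the triviality of the ambient character on the two exchanged subgroups, and the non-degeneracy of the pairing; keeping track of the characters and of Haar measures under these moves is the delicate point, and it is here that the interplay of the $\mathfrak{sl}_2$-triple with the flag defining $\fn_\lambda$ is used. Second, when $F$ has positive characteristic one can no longer invoke complete reducibility of $\mathfrak{sl}_2$-modules to control the maps $\operatorname{ad}f_\lambda$. For $G=\GL_n$ this causes no real trouble, which is precisely the point emphasized in the text: $\operatorname{ad}f_\lambda$ acts on $\gl_n\cong\bigoplus_{a,b}\Hom(F^{\lambda_b},F^{\lambda_a})$ by $X\mapsto f_\lambda X-Xf_\lambda$ with $f_\lambda$ a fixed integer nilpotent matrix, and the dimensions of the kernels of its graded components depend only on the partition $\lambda$ and not on the characteristic, so the combinatorics goes through over any local field. (Over an Archimedean field one would work instead in the category of Casselman--Wallach or Schwartz representations, replacing compact induction and Fourier inversion by their topological counterparts.) Since exactly this program is carried out in \cite{MR3705224}*{Theorem F} for $\GL_n$ over an arbitrary local field, the cleanest course is to invoke that result.
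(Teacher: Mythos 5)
Your proposal is correct and is essentially the paper's own argument: the appendix establishes this statement precisely by invoking \cite{MR3705224}*{Theorem F}, remarking that the root-exchange proof given there for characteristic $0$ goes through with the stated definitions in positive characteristic as well. Your sketch of the exchange chain governed by the $\operatorname{ad}h_\lambda$-grading and your observation that the relevant $\operatorname{ad}f_\lambda$ combinatorics is characteristic-independent for $\GL_n$ are exactly the content of that remark, so nothing further is needed.
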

This theorem is proven using the root exchange technique.
In \cite{MR3705224} it was assumed that $F$ is of characteristic $0$, but with the above definitions, the proof works for
positive characteristic as well.

\subsection{}
Let $\Rep^{\infty}(G)$ denote the category of smooth representations of $G$, and $\Irr(G)$ denote the set of
isomorphism classes of irreducible smooth representations of $G$.
For any $\pi \in\Rep^{\infty}(G)$ and each of the groups $R=N,N_\lambda,U_\lambda$, denote by $\pi_{\psi_{R,\lambda}}$
the spaces of $(R,\psi_{R,\lambda})$-coinvariants of $\pi$.
By Frobenius reciprocity for compact induction (\cite{MR0425030}*{Proposition 2.29}), we have
\begin{equation}\label{=Frob}
\pi_{\psi_{R,\lambda}}\cong (\cW_{\psi_{R,\lambda}}\otimes \pi)_{G},\ \ R=N,N_\lambda,U_\lambda
\end{equation}
where the subscript $G$ denotes coinvariants under the diagonal action of $G$ on the tensor product.
The dual space of $\pi_{\psi_{R,\lambda}}$ is canonically isomorphic to $\Hom(\pi,\Ind_R^G\psi_{R,\lambda}^{-1})$.

Since $N_\lambda$ is a subgroup of $N$, we have the natural projection
\[
\pi_{\psi_{N_\lambda,\lambda}}\onto \pi_{\psi_{N,\lambda}}.
\]
Under certain conditions, this map is an isomorphism.
In order to formulate and prove this statement more precisely we will express both spaces in terms of Bernstein--Zelevinsky derivatives
and ``pre-derivatives", that we will now define, following \cites{MR0425030,MR0579172,MR3319629}.

For any $k\leq n$ let $G_k:=\GL_k(F)$, and consider it as a subgroup of $G$ embedded in the upper left corner.
The definition of derivative is based on the so-called ``mirabolic'' subgroup $P_{n}$ of $G_{n}$ consisting of
matrices with last row $(0,\dots,0,1)$. The Levi decomposition of $P_n$ is $P_n = G_{n-1} \ltimes V_n$ where
the unipotent radical $V_n$ is an $\left( n-1\right) $-dimensional linear space and the reductive part is $G_{n-1}$.
If $n>1$, the group $G_{n-1}$ has two orbits on $V_{n}$ and hence also on the Pontryagin dual $V_n^*$ of $V_n$:
the closed one consisting of just $0$ itself, and its complement.
Let $\psi_{V_n}$ be the non-trivial character of $V_{n}$ given by
\[
\psi_{V_n}(v):=\psi(v_{n-1,n}).
\]
Then, the stabilizer of $\psi_{V_n}$ in $G_{n-1}$ is $P_{n-1}$.
Following \cite{MR0425030}*{\S 5.11}, for any $n>1$ define functors
\[
\Phi^-:\Rep^{\infty}(P_n)\rightarrow\Rep^\infty(P_{n-1}),\ \ \Psi^-:\Rep^\infty(P_n)\rightarrow\Rep^\infty(G_{n-1})
\]
by\footnote{Note that in \cite{MR0579172} the definition of these functors differs by a twist by the character $|\det|^{1/2}$.}
\[
\Phi^-(\pi):=\pi_{\psi_{V_n}},\quad \Psi^-(\pi):=\pi_{V_n}.
\]
For consistency, we also write $\Phi^-(\pi)=\Psi^-(\pi)=\pi$ if $n=1$ (in which case the groups
$P_n=G_{n-1}=P_{n-1}$ are trivial).
Denote also by $\pi|_{G_{n-1}}$ the restriction.

We then define functors
\[
D^k,E^k:\Rep^\infty(G_n)\rightarrow\Rep^\infty(G_{n-k}),\ \ 0<k\le n
\]
by
\[
D^k(\pi):=\Psi^-((\Phi^-)^{k-1}(\pi|_{P_{n}})), \quad E^k(\pi):=((\Phi^-)^{k-1}(\pi|_{P_{n}}))|_{G_{n-k}}.
\]
The representation $D^k(\pi)$ is called \emph{the $k$-th derivative of $\pi$},
and $E^k(\pi)$ is called \emph{the $k$-th pre-derivative of $\pi$}.

Observe that from the definitions, for any $\pi\in \Rep^{\infty}(G)$ and any partition $\lambda$ we have
\begin{equation} \label{eq: DEW}
\pi_{\psi_{N,\lambda}}\cong D^{\lambda_l}(D^{\lambda_{l-1}}(\dots(D^{\lambda_1}(\pi)))\quad\text{and} \quad
\pi_{\psi_{N_\lambda,\lambda}}\cong E^{\lambda_l}(E^{\lambda_{l-1}}(\dots(E^{\lambda_1}(\pi))).
\end{equation}

The basic idea behind the theory of derivatives, developed in \cite{MR0579172},
is to decompose any smooth representation $\pi$ of $P_n$ into irreducible representations of $V_n$,
thus obtaining a $G_{n-1}$-equivariant sheaf on $V_n^*$.
Then. $\Psi^-(\pi)$ is the fiber at zero (the closed orbit), and $\Phi^-(\pi)$ is the fiber at a point in the open orbit.
In particular, if $\Phi^-(\pi)$ is zero, then $V_n$ acts trivially on $\pi$, and thus $\Psi^-(\pi)$ is $\pi$.
This statement is \cite{MR0579172}*{Remark 3.3(b) and Proposition 3.2 (d,e)}.
It implies the following result.

\begin{proposition}[{}]\label{prop:DE}
Let $\pi\in \Rep^{\infty}(G)$ and $k\leq n$. Suppose that $D^{i}(\pi)=0$ for every $i>k$.
Then the natural projection $E^k(\pi)\onto D^k(\pi)$ is an isomorphism.
\end{proposition}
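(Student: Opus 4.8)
The plan is to peel off the outer mirabolic layers and reduce everything to the Bernstein--Zelevinsky fact recalled just above the statement: if $\Phi^-$ of a smooth mirabolic representation vanishes, then its unipotent radical acts trivially. Write $\rho:=(\Phi^-)^{k-1}(\pi|_{P_n})\in\Rep^{\infty}(P_{n-k+1})$, so that by definition $E^k(\pi)=\rho|_{G_{n-k}}$ and $D^k(\pi)=\Psi^-(\rho)=\rho_{V_{n-k+1}}$. Since $P_{n-k+1}=G_{n-k}\ltimes V_{n-k+1}$, the $V_{n-k+1}$-coinvariant quotient $\rho\onto\rho_{V_{n-k+1}}$ is $G_{n-k}$-equivariant and is precisely the natural projection $E^k(\pi)\onto D^k(\pi)$ of the statement; its kernel is spanned by the vectors $\rho(v)x-x$ with $v\in V_{n-k+1}$. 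Hence the assertion is equivalent to the claim that $V_{n-k+1}$ acts trivially on $\rho$. If $k=n$ this is automatic since $V_1$ is trivial, so assume $k<n$; then by \cite{MR0579172}*{Remark 3.3(b) and Proposition 3.2(d,e)} it suffices to show that $\Phi^-(\rho)=0$.

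I would then reinterpret the hypothesis. Set $\tau:=\Phi^-(\rho)=(\Phi^-)^{k}(\pi|_{P_n})\in\Rep^{\infty}(P_{n-k})$. From the definition of the derivatives, for every integer $j\ge1$,
\[
D^{k+j}(\pi)=\Psi^-\bigl((\Phi^-)^{k+j-1}(\pi|_{P_n})\bigr)=\Psi^-\bigl((\Phi^-)^{j-1}(\tau)\bigr),
\]
so the assumption $D^i(\pi)=0$ for all $i>k$ says exactly that $\Psi^-\bigl((\Phi^-)^{j}(\tau)\bigr)=0$ for every $0\le j\le(n-k)-1$; that is, every Bernstein--Zelevinsky derivative of the $P_{n-k}$-representation $\tau$ vanishes.

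It then remains to establish the (essentially formal) fact that a smooth representation $\tau$ of $P_m$ with $\Psi^-\bigl((\Phi^-)^{j}(\tau)\bigr)=0$ for all $0\le j\le m-1$ must be zero, which I would prove by induction on $m$. For $m=1$ the groups $V_1$ and $G_0$ are trivial, $\Psi^-$ is the identity, and the hypothesis $\Psi^-(\tau)=0$ says $\tau=0$. For $m\ge2$, the representation $\Phi^-(\tau)\in\Rep^{\infty}(P_{m-1})$ satisfies $\Psi^-\bigl((\Phi^-)^{j}(\Phi^-(\tau))\bigr)=\Psi^-\bigl((\Phi^-)^{j+1}(\tau)\bigr)=0$ for $0\le j\le m-2$, hence $\Phi^-(\tau)=0$ by the induction hypothesis; then \cite{MR0579172}*{Remark 3.3(b)} gives that $V_m$ acts trivially on $\tau$, and therefore $\tau=\Psi^-(\tau)=0$ by the remaining instance $j=0$ of the hypothesis. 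Applying this with $m=n-k$ to the representation $\tau$ from the previous paragraph yields $\Phi^-(\rho)=\tau=0$, and combined with the first paragraph this proves the proposition.

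I expect the only genuinely delicate point to be the bookkeeping: keeping careful track of how the iterated functors $(\Phi^-)^{k-1}$ and $(\Phi^-)^{k}$ shift the rank of the mirabolic group, checking that the map in the statement really is the $V_{n-k+1}$-coinvariant projection of $G_{n-k}$-modules (and is therefore surjective with the stated kernel), and verifying that the range $k<i\le n$ in the hypothesis is exactly the range $0\le j\le n-k-1$ needed to annihilate every layer of the Bernstein--Zelevinsky filtration of $\tau$. No analytic input beyond the cited Bernstein--Zelevinsky results enters.
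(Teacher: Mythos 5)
Your argument is correct and is essentially the paper's intended one: the proposition is presented there as an immediate consequence of the quoted Bernstein--Zelevinsky fact that $\Phi^-=0$ forces the unipotent radical to act trivially (hence $\Psi^-$ is the identity on such representations), and your reduction to showing $(\Phi^-)^{k}(\pi|_{P_n})=0$, via the induction that a $P_m$-representation all of whose derivatives vanish is zero, is exactly the bookkeeping the paper leaves implicit. No gaps.
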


If $\pi\ne0$ then the maximal index $k\le n$ such that $D^k(\pi)\neq 0$ is called the \emph{depth} of $\pi$, and denoted $\depth(\pi)$.
The representation $D^{\depth(\pi)}(\pi)$ is called the \emph{highest derivative} of $\pi$.

\subsection{}
The theory of derivatives lead to the Zelevinsky classification of $\Irr(G)$ \cite{MR584084}.
Along the way, Zelevinsky proved that the highest derivative of any irreducible representation is irreducible:

\begin{proposition}[\cite{MR584084}*{Theorem 8.1}]\label{prop:DIrr}
Let $\pi\in \Irr(G)$ and $d:=\depth(\pi)$. Then $D^d(\pi)\in \Irr(G_{n-d})$.
\end{proposition}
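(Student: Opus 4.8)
The plan is to argue by induction on $n$, combining the Zelevinsky classification of $\Irr(G)$ with the compatibility of the Bernstein--Zelevinsky derivatives with parabolic induction. The base ingredient is the single-segment case: for a segment $\Delta=[\rho,\nu^{a}\rho]$, with $\rho$ an irreducible supercuspidal representation of $\GL_{t}(F)$, let $Z(\Delta)$ denote the irreducible representation that Zelevinsky attaches to $\Delta$. Then $D^{i}(Z(\Delta))\ne 0$ exactly for $i\in\{0,t\}$, and $D^{t}(Z(\Delta))\cong Z([\rho,\nu^{a-1}\rho])$ — read as the representation of $\GL_{0}$ when $a=0$; in particular $\depth(Z(\Delta))=t$ and $D^{t}(Z(\Delta))$ is irreducible. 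This is a direct computation of the twisted Jacquet modules along the mirabolic subgroup, i.e.\ of the Bernstein--Zelevinsky layers of $Z(\Delta)|_{P_{n}}$ as in \cite{MR0579172}; equivalently, one checks that this restriction carries a two-step Bernstein--Zelevinsky filtration.

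For the inductive step write $\pi=\langle\mathfrak m\rangle$ for a multisegment $\mathfrak m=\{\Delta_{1},\dots,\Delta_{r}\}$, and realize $\pi$ as the unique irreducible submodule of a suitably ordered standard module $\zeta=Z(\Delta_{1})\times\cdots\times Z(\Delta_{r})$ and as the unique irreducible quotient of the oppositely ordered product $\zeta'=Z(\Delta_{r})\times\cdots\times Z(\Delta_{1})$. The Leibniz rule for derivatives of induced representations \cite{MR584084}*{\S4} gives a filtration of $D^{k}(\zeta)$ with subquotients
\[
D^{k_{1}}(Z(\Delta_{1}))\times\cdots\times D^{k_{r}}(Z(\Delta_{r})),\qquad k_{1}+\cdots+k_{r}=k.
\]
Put $t_{i}=\deg\rho_{\Delta_{i}}$ and $k^{*}=\sum_{i}t_{i}$. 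By the single-segment case, for $k=k^{*}$ the only surviving subquotient is the one with $k_{i}=t_{i}$ for all $i$, so $D^{k^{*}}(\zeta)\cong\zeta(\mathfrak m^{-})$ and likewise $D^{k^{*}}(\zeta')\cong\zeta'(\mathfrak m^{-})$, where $\mathfrak m^{-}$ is obtained from $\mathfrak m$ by shortening each $\Delta_{i}$ from the right (discarding it if it becomes empty) and $\zeta(\mathfrak m^{-}),\zeta'(\mathfrak m^{-})$ are the corresponding standard modules.

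Since the derivative functors are exact, applying $D^{k^{*}}$ to $\pi\hookrightarrow\zeta$ and to $\zeta'\onto\pi$ realizes $D^{k^{*}}(\pi)$ simultaneously as a submodule of $\zeta(\mathfrak m^{-})$ and as a quotient of $\zeta'(\mathfrak m^{-})$; hence, as soon as it is nonzero, it has simple socle and simple cosocle, each isomorphic to $\langle\mathfrak m^{-}\rangle$. To upgrade this to the equality $D^{k^{*}}(\pi)=\langle\mathfrak m^{-}\rangle$ one uses that $\langle\mathfrak m^{-}\rangle$ occurs with multiplicity one in $\zeta(\mathfrak m^{-})$, so it lies in exactly one of the subquotients $D^{k^{*}}(\pi)$ and $D^{k^{*}}(\zeta/\pi)$; the structural fact that every composition factor of $\zeta$ other than $\pi$ has the form $\langle\mathfrak m''\rangle$ with $\mathfrak m''\succ\mathfrak m$, together with the inductive hypothesis, shows $\depth(\pi)=k^{*}$ (so $D^{k^{*}}(\pi)\ne 0$) and then pins $D^{k^{*}}(\pi)$ down to the copy of $\langle\mathfrak m^{-}\rangle$. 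I expect this last point — transferring irreducibility from the top derivative of the standard module to that of its irreducible socle $\pi$, i.e.\ identifying $\depth(\pi)$ and isolating $\langle\mathfrak m^{-}\rangle$ from the remaining constituents — to be the main obstacle, and it is exactly where the combinatorics of the Zelevinsky classification are needed. A variant more in the spirit of this appendix is to use instead the Bernstein--Zelevinsky filtration of $\pi|_{P_{n}}$, whose bottom nonzero layer is the submodule $(\Phi^{+})^{d-1}\Psi^{+}(D^{d}(\pi))$ with $d=\depth(\pi)$, the lower layers (involving $D^{i}(\pi)$ for $i>d$) all vanishing: since the left adjoints $\Psi^{+},\Phi^{+}$ of $\Psi^{-},\Phi^{-}$ are exact and both preserve and reflect irreducibility, it then suffices to prove that the socle of $\pi|_{P_{n}}$ is irreducible — a statement about restriction to the mirabolic that, via the classification, again reduces to the single-segment computation.
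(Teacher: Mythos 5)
Note first that the paper does not prove this proposition at all: it is quoted from Zelevinsky \cite{MR584084}*{Theorem 8.1} and only used as an input in the appendix. So the question is whether your sketch amounts to a proof of Zelevinsky's theorem, and it does not: the step you yourself flag as ``the main obstacle'' is precisely the content of the theorem. What is solid in your outline is the comparatively easy part, and it does follow the skeleton of Zelevinsky's own argument: the derivatives of a single segment representation, the Leibniz-rule identification $D^{k^*}(\zeta)\cong Z(\Delta_1^-)\times\cdots\times Z(\Delta_r^-)$ with $k^*=\sum_i\deg\rho_{\Delta_i}$, and hence $D^j(\pi)=0$ for $j>k^*$ by exactness. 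Moreover, \emph{granted} that $D^{k^*}(\pi)\neq0$ and that the shortened segments (with empty ones discarded) are again suitably ordered --- so that $\langle\mathfrak m^-\rangle$ is the socle of $D^{k^*}(\zeta)$, occurring there with multiplicity one, and dually is the cosocle of $D^{k^*}(\zeta')$ --- your socle/cosocle/multiplicity-one argument does pin down $D^{k^*}(\pi)\cong\langle\mathfrak m^-\rangle$. The suitable-ordering point is a verification you pass over silently; the nonvanishing is the genuine gap.

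Indeed, the assertion $\operatorname{depth}(\pi)=k^*$ is the heart of the theorem, and your route to it is not an argument as stated: you appeal to ``the inductive hypothesis'' for the other composition factors $\langle\mathfrak m''\rangle$ of $\zeta$, but these are irreducible representations of the \emph{same} group $G_n$, so the announced induction on $n$ gives you nothing about them. You would need a second, downward induction along the ordering on multisegments with fixed cuspidal support (with the extremal case, where $\zeta$ is irreducible and the Leibniz computation already suffices, as base), and in addition, to extract $D^{k^*}(\pi)\neq0$ from the Grothendieck-group identity $[D^{k^*}(\zeta)]=[D^{k^*}(\pi)]+\sum_{\mathfrak m''}[D^{k^*}(\langle\mathfrak m''\rangle)]$, a combinatorial lemma guaranteeing that $\langle\mathfrak m^-\rangle$ is not accounted for by the contributions $\langle(\mathfrak m'')^-\rangle$ of the other constituents of maximal depth; concretely, injectivity of $\mathfrak m\mapsto\mathfrak m^-$ on multisegments with the given cuspidal support. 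This is true, but it is neither stated nor proved in your sketch, and it is not a consequence of the inputs you cite. The closing alternative --- that ``it suffices to prove that the socle of $\pi|_{P_n}$ is irreducible'' and that this ``again reduces to the single-segment computation via the classification'' --- is another unproved claim of essentially the same depth, not a reduction. As it stands, then, the proposal is an outline of Zelevinsky's \S8 strategy with its decisive step left open; for the purposes of this appendix the correct course is simply to cite \cite{MR584084}*{Theorem 8.1}, as the paper does.
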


\begin{definition}
Let $\pi\in \Irr(G)$. Let $d_1=\depth(\pi)$, $\pi_1=\pi$ and define recursively $\pi_i=D^{d_{i-1}}(\pi_{i-1})$
and $d_i=\depth(\pi_i)$, $i>1$ until $\sum_{i=1}^sd_i=n$. Then $dp(\pi)=(d_1,\dots,d_s)$
is the \emph{depth partition} of $\pi$.
\end{definition}
This partition is described combinatorially in terms of the Zelevinsky classification of $\pi$ \cite{MR584084}*{\S8}.
From this description it follows that this is indeed a partition, i.e., $d_1\geq d_2\geq \dots\geq d_s$.

\subsection{}
Summarizing all the above we obtain the following result.

\begin{proposition}\label{prop: mainapp}
Let $\pi\in \Irr(G)$ and let $\lambda$ be the depth partition of $\pi$. Then,
\[
\pi_{\psi_{U_\lambda,\lambda}}\simeq \pi_{\psi_{N_\lambda,\lambda}}\cong\pi_{\psi_{N,\lambda}},
\]
and the three spaces are one-dimensional.
\end{proposition}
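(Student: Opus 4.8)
The plan is to chain together the three isomorphisms provided by the preceding results, and then pin down the dimension using the known structure of Speh representations. First I would invoke Theorem \ref{thm:RE} applied to the depth partition $\lambda$ of $\pi$: this immediately gives the left-hand isomorphism $\pi_{\psi_{U_\lambda,\lambda}}\simeq\pi_{\psi_{N_\lambda,\lambda}}$, and it holds for all $\pi\in\Rep^\infty(G)$, not just irreducible ones, so in particular for our $\pi$. The only subtlety is bookkeeping: one must check that the character $\psi_{U_\lambda,\lambda}$ and subgroup $U_\lambda$ attached to $\lambda$ in the appendix's conventions really are the objects appearing in the model-transition machinery of the main text, but since $\lambda$ here is taken to be the depth partition this is a matter of unwinding definitions.

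Next I would establish the middle isomorphism $\pi_{\psi_{N_\lambda,\lambda}}\cong\pi_{\psi_{N,\lambda}}$. This is where Proposition \ref{prop:DE} enters. By \eqref{eq: DEW} the space $\pi_{\psi_{N,\lambda}}$ is the iterated derivative $D^{\lambda_l}(\cdots(D^{\lambda_1}(\pi)))$ and $\pi_{\psi_{N_\lambda,\lambda}}$ is the corresponding iterated pre-derivative $E^{\lambda_l}(\cdots(E^{\lambda_1}(\pi)))$. The point of choosing $\lambda$ to be the \emph{depth} partition is precisely that at each stage the partition part being applied equals the depth of the current representation, so the hypothesis of Proposition \ref{prop:DE} — that $D^i$ vanishes for $i$ exceeding the chosen $k$ — is satisfied at every step. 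By Proposition \ref{prop:DIrr} each intermediate derivative $\pi_i=D^{d_{i-1}}(\pi_{i-1})$ is again irreducible, so one can apply Proposition \ref{prop:DE} repeatedly, stage by stage, obtaining $E^{d_i}(\pi_i)\cong D^{d_i}(\pi_i)$ each time, and then compose. Thus one gets the chain of natural maps from the iterated $E$'s down to the iterated $D$'s, all of which are isomorphisms.

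Finally, for the one-dimensionality: by Proposition \ref{prop:DIrr} applied to the last stage, $D^{d_s}(\pi_s)$ is an irreducible representation of $G_{n-\sum d_i}=G_0=\GL_0(F)$, the trivial group, hence one-dimensional. Since all three spaces in the statement are isomorphic, they are all one-dimensional. (Equivalently, one notes that the defining property of a depth partition forces the iterated highest derivative to be the trivial representation of the trivial group.)

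The main obstacle, such as it is, is not a deep one: it is the verification that the combinatorial recipe ``apply $D^{d_1}$, then $D^{d_2}$, \dots'' indeed terminates at $G_0$ with $\sum d_i=n$, and that the depth partition is genuinely a partition so that the whole apparatus (in particular the $\mathfrak{sl}_2$-triple $(e_\lambda,h_\lambda,f_\lambda)$ underlying $U_\lambda$) makes sense. Both facts are supplied by the reference to Zelevinsky's classification cited after the definition of $dp(\pi)$, so the argument reduces to assembling the quoted propositions in the correct order. I would present the proof as: (1) reduce $\pi_{\psi_{U_\lambda,\lambda}}\simeq\pi_{\psi_{N_\lambda,\lambda}}$ to Theorem \ref{thm:RE}; (2) prove $\pi_{\psi_{N_\lambda,\lambda}}\cong\pi_{\psi_{N,\lambda}}$ by induction on the length of $\lambda$ using \eqref{eq: DEW}, Proposition \ref{prop:DIrr}, and Proposition \ref{prop:DE}; (3) deduce one-dimensionality from the final stage landing in $\Irr(\GL_0(F))$.
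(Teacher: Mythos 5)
Your proposal is correct and follows essentially the same route as the paper: the first isomorphism from Theorem \ref{thm:RE} (combined, as the paper notes, with the Frobenius-reciprocity identity \eqref{=Frob} to pass from the induced spaces $\cW_{\psi_{R,\lambda}}$ to the coinvariants $\pi_{\psi_{R,\lambda}}$), the second by iterating Proposition \ref{prop:DE} via \eqref{eq: DEW} along the depth partition, and one-dimensionality from repeated use of Proposition \ref{prop:DIrr} landing in $\Irr(G_0)$. Your stage-by-stage induction merely spells out what the paper states tersely, so no further comment is needed.
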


\begin{proof}
The first isomorphism follows from  \eqref{=Frob} and Theorem \ref{thm:RE}.
The second one follows from \eqref{eq: DEW} and Proposition \ref{prop:DE}.
Finally, by \eqref{eq: DEW} and using Proposition \ref{prop:DIrr} repeatedly, we obtain that $\pi_{\psi_{N,\lambda}}$
is an irreducible representation of $G_0$, i.e., a one-dimensional vector space (cf. \cite{MR584084}*{Corollary 8.3}).
\end{proof}

\subsection{}
Let us end with a few remarks.
The realization of $\pi$ in $\Ind_N^G\psi_{N,\lambda}$ is the Zelevinsky model of $\pi$ considered in \cite{MR584084}
(where it was called the degenerate Whittaker model).
The realization of $\pi$ in $\Ind_{U_\lambda}^G\psi_{U_\lambda,\lambda}$ can be called the generalized Whittaker model of $\pi$.
In the case $\lambda_1=\dots=\lambda_l$ it was considered in \cite{1806.10528} under the terminology (generalized) Shalika model.
In this case, $U_\lambda$ is conjugate to the unipotent radical $V$ of the standard parabolic subgroup $P=LV\subset G$
with Levi subgroup $L=\GL_l(F)\times \dots\times\GL_{l}(F)$.
Under this conjugation, $\psi_{U_\lambda,\lambda}$ becomes the generic character $\psi_V$ of $V$ given by the value of $\psi$
on the sum of traces of super-diagonal blocks.
The stabilizer $M$ of $\psi_V$ in $L$ is the diagonal copy of $\GL_l(F)$.
Then $M$ acts on $\pi_{\psi_V}$ for any $\pi\in\Irr(G)$, and since $ \pi_{\psi_V}\cong \pi_{\psi_{U_\lambda,\lambda}}$ by conjugation,
it also acts on $\pi_{\psi_{U_\lambda,\lambda}}$.
For Speh representations, a unitary structure on the Zelevinsky model and the generalized Shalika model was given explicitly in \cite{1806.10528}.
Note that the assumption that $F$ is of characteristic $0$, made in \cite{1806.10528}, is redundant thanks to Proposition \ref{prop: mainapp}.

Consider now the Archimedean case.
We work with the category of nuclear, smooth, Fr\'echet representations of moderate growth.
The relations \eqref{=Frob}, \eqref{eq: DEW} and Theorem \ref{thm:RE} still hold  with the same proofs (see \cite{MR3705224}*{\S 4}),
provided that we take the completed tensor product and replace compact induction by Schwartz induction (see \cite{MR1100992}*{\S2}).
Unfortunately, Proposition \ref{prop:DE} does not hold in general.
For instance, if $n=2$, $k=1$ and $\pi$ is an irreducible finite-dimensional representation,
then $E^{k}(\pi)=\pi$ while $D^k(\pi)=\pi_N$ is one-dimensional.
We do not know whether Proposition \ref{prop:DIrr} holds in general.
However, Propositions \ref{prop:DE} and \ref{prop:DIrr}, and hence also \ref{prop: mainapp}, are known to hold
if $\pi$ is \emph{unitarizable} (and irreducible) -- see \cite{MR3319629}*{Theorem 4.3.1}.

The unitary structure on models of Speh representations considered in \cite{1806.10528}
is also valid in the Archimedean case, with the same proof.
The invariance relies on the following two facts.
The first is that for any tempered (or more generally, unitarizable and generic) irreducible $\pi$
and $m>0$ we have
\[
\Sp(\pi,m)\hookrightarrow\Sp(\pi,m-1)\abs{\cdot}^{-\frac12}\times\pi\abs{\cdot}^{\frac{m-1}2}
\]
where $\Sp(\pi,m)$ is the Langlands quotient of $\pi\abs{\cdot}^{\frac{m-1}2}\times\dots\times\pi\abs{\cdot}^{\frac{1-m}2}$
and $\times$ denotes parabolic induction. (This follows from the fact that the Langlands quotient is obtained as the image of the ``longest''
intertwining operator, which can be factorized as the product of ``shorter'' intertwining operators.)
The second is Bernstein's theorem on $P_n$-invariant distributions which was extended to the archimedean case by Baruch \cite{MR1999922}.


\def\cprime{$'$} 
\begin{bibdiv}
\begin{biblist}

\bib{MR3319629}{article}{
      author={Aizenbud, Avraham},
      author={Gourevitch, Dmitry},
      author={Sahi, Siddhartha},
       title={Derivatives for smooth representations of {$GL(n,\Bbb{R})$} and
  {$GL(n,\Bbb{C})$}},
        date={2015},
        ISSN={0021-2172},
     journal={Israel J. Math.},
      volume={206},
      number={1},
       pages={1\ndash 38},
         url={https://doi.org/10.1007/s11856-015-1149-9},
      review={\MR{3319629}},
}

\bib{MR1999922}{article}{
      author={Baruch, Ehud~Moshe},
       title={A proof of {K}irillov's conjecture},
        date={2003},
        ISSN={0003-486X},
     journal={Ann. of Math. (2)},
      volume={158},
      number={1},
       pages={207\ndash 252},
         url={http://dx.doi.org/10.4007/annals.2003.158.207},
      review={\MR{1999922 (2004f:22012)}},
}

\bib{MR0425030}{article}{
      author={Bern{\v{s}}te{\u\i}n, I.~N.},
      author={Zelevinski{\u\i}, A.~V.},
       title={Representations of the group {$GL(n,F),$} where {$F$} is a local
  non-{A}rchimedean field},
        date={1976},
        ISSN={0042-1316},
     journal={Uspehi Mat. Nauk},
      volume={31},
      number={3(189)},
       pages={5\ndash 70},
      review={\MR{0425030 (54 \#12988)}},
}

\bib{MR0579172}{article}{
      author={Bernstein, I.~N.},
      author={Zelevinsky, A.~V.},
       title={Induced representations of reductive {${\germ p}$}-adic groups.
  {I}},
        date={1977},
        ISSN={0012-9593},
     journal={Ann. Sci. \'Ecole Norm. Sup. (4)},
      volume={10},
      number={4},
       pages={441\ndash 472},
      review={\MR{0579172 (58 \#28310)}},
}

\bib{MR748505}{incollection}{
      author={Bernstein, Joseph~N.},
       title={{$P$}-invariant distributions on {${\rm GL}(N)$} and the
  classification of unitary representations of {${\rm GL}(N)$}
  (non-{A}rchimedean case)},
        date={1984},
   booktitle={Lie group representations, {II} ({C}ollege {P}ark, {M}d.,
  1982/1983)},
      series={Lecture Notes in Math.},
      volume={1041},
   publisher={Springer},
     address={Berlin},
       pages={50\ndash 102},
         url={http://dx.doi.org/10.1007/BFb0073145},
      review={\MR{748505 (86b:22028)}},
}

\bib{1812.00047}{misc}{
      author={Beuzart-Plessis, Rapha\"{e}l},
       title={Plancherel formula for ${GL_n(F)\backslash GL_n(E)}$ and
  applications to the {I}chino-{I}keda and formal degree conjectures for
  unitary groups},
        date={2018},
        note={arXiv:1812.00047},
}

\bib{MR946351}{article}{
      author={Cowling, M.},
      author={Haagerup, U.},
      author={Howe, R.},
       title={Almost {$L^2$} matrix coefficients},
        date={1988},
        ISSN={0075-4102},
     journal={J. Reine Angew. Math.},
      volume={387},
       pages={97\ndash 110},
      review={\MR{946351 (89i:22008)}},
}

\bib{MR1957064}{inproceedings}{
      author={Delorme, Patrick},
       title={Harmonic analysis on real reductive symmetric spaces},
        date={2002},
   booktitle={Proceedings of the {I}nternational {C}ongress of
  {M}athematicians, {V}ol. {II} ({B}eijing, 2002)},
   publisher={Higher Ed. Press, Beijing},
       pages={545\ndash 554},
      review={\MR{1957064}},
}

\bib{MR3770165}{article}{
      author={Delorme, Patrick},
       title={Neighborhoods at infinity and the {P}lancherel formula for a
  reductive {$p$}-adic symmetric space},
        date={2018},
        ISSN={0025-5831},
     journal={Math. Ann.},
      volume={370},
      number={3-4},
       pages={1177\ndash 1229},
         url={https://doi.org/10.1007/s00208-017-1554-y},
      review={\MR{3770165}},
}

\bib{MR1100992}{article}{
      author={du~Cloux, Fokko},
       title={Sur les repr\'esentations diff\'erentiables des groupes de {L}ie
  alg\'ebriques},
        date={1991},
        ISSN={0012-9593},
     journal={Ann. Sci. \'Ecole Norm. Sup. (4)},
      volume={24},
      number={3},
       pages={257\ndash 318},
         url={http://www.numdam.org/item?id=ASENS_1991_4_24_3_257_0},
      review={\MR{1100992 (92j:22026)}},
}

\bib{MR3705224}{article}{
      author={Gomez, Raul},
      author={Gourevitch, Dmitry},
      author={Sahi, Siddhartha},
       title={Generalized and degenerate {W}hittaker models},
        date={2017},
        ISSN={0010-437X},
     journal={Compos. Math.},
      volume={153},
      number={2},
       pages={223\ndash 256},
         url={https://doi.org/10.1112/S0010437X16007788},
      review={\MR{3705224}},
}

\bib{MR1078382}{article}{
      author={Heumos, Michael~J.},
      author={Rallis, Stephen},
       title={Symplectic-{W}hittaker models for {${\rm Gl}_n$}},
        date={1990},
        ISSN={0030-8730},
     journal={Pacific J. Math.},
      volume={146},
      number={2},
       pages={247\ndash 279},
         url={http://projecteuclid.org/getRecord?id=euclid.pjm/1102645157},
      review={\MR{1078382 (91k:22036)}},
}

\bib{MR944325}{article}{
      author={Hironaka, Yumiko},
      author={Sat\B{o}, Fumihiro},
       title={Spherical functions and local densities of alternating forms},
        date={1988},
        ISSN={0002-9327},
     journal={Amer. J. Math.},
      volume={110},
      number={3},
       pages={473\ndash 512},
         url={https://doi.org/10.2307/2374620},
      review={\MR{944325}},
}

\bib{MR2058616}{incollection}{
      author={Jacquet, Herv{\'e}},
      author={Lapid, Erez},
      author={Rallis, Stephen},
       title={A spectral identity for skew symmetric matrices},
        date={2004},
   booktitle={Contributions to automorphic forms, geometry, and number theory},
   publisher={Johns Hopkins Univ. Press},
     address={Baltimore, MD},
       pages={421\ndash 455},
      review={\MR{2058616 (2005h:11105)}},
}

\bib{MR1142486}{article}{
      author={Jacquet, Herv{\'e}},
      author={Rallis, Stephen},
       title={Symplectic periods},
        date={1992},
        ISSN={0075-4102},
     journal={J. Reine Angew. Math.},
      volume={423},
       pages={175\ndash 197},
      review={\MR{1142486 (93b:22035)}},
}

\bib{1806.10528}{article}{
      author={Lapid, Erez},
      author={Mao, Zhengyu},
       title={Local {R}ankin--{S}elberg integrals for {S}peh representations},
     journal={Compos. Math.},
      volume={to appear},
        note={arXiv:1806.10528},
}

\bib{MR3267120}{article}{
      author={Lapid, Erez},
      author={Mao, Zhengyu},
       title={A conjecture on {W}hittaker-{F}ourier coefficients of cusp
  forms},
        date={2015},
        ISSN={0022-314X},
     journal={J. Number Theory},
      volume={146},
       pages={448\ndash 505},
         url={http://dx.doi.org/10.1016/j.jnt.2013.10.003},
      review={\MR{3267120}},
}

\bib{MR3431601}{article}{
      author={Lapid, Erez},
      author={Mao, Zhengyu},
       title={Model transition for representations of metaplectic type},
        date={2015},
        ISSN={1073-7928},
     journal={Int. Math. Res. Not. IMRN},
      number={19},
       pages={9486\ndash 9568},
         url={http://dx.doi.org/10.1093/imrn/rnu225},
        note={With an appendix by Marko Tadi{\'c}},
      review={\MR{3431601}},
}

\bib{MR913667}{article}{
      author={M\oe~glin, C.},
      author={Waldspurger, J.-L.},
       title={Mod\`eles de {W}hittaker d\'{e}g\'{e}n\'{e}r\'{e}s pour des
  groupes {$p$}-adiques},
        date={1987},
        ISSN={0025-5874},
     journal={Math. Z.},
      volume={196},
      number={3},
       pages={427\ndash 452},
         url={https://doi.org/10.1007/BF01200363},
      review={\MR{913667}},
}

\bib{MR2248833}{article}{
      author={Offen, Omer},
       title={Residual spectrum of {${\rm GL}_{2n}$} distinguished by the
  symplectic group},
        date={2006},
        ISSN={0012-7094},
     journal={Duke Math. J.},
      volume={134},
      number={2},
       pages={313\ndash 357},
         url={http://dx.doi.org/10.1215/S0012-7094-06-13423-3},
      review={\MR{2248833}},
}

\bib{MR3764130}{article}{
      author={Sakellaridis, Yiannis},
      author={Venkatesh, Akshay},
       title={Periods and harmonic analysis on spherical varieties},
        date={2017},
        ISSN={0303-1179},
     journal={Ast\'erisque},
      number={396},
       pages={viii+360},
      review={\MR{3764130}},
}

\bib{1812.04091}{misc}{
      author={Smith, Jerrod~Manford},
       title={Speh representations are relatively discrete},
        date={2018},
        note={arXiv:1812.04091},
}

\bib{MR1989693}{article}{
      author={Waldspurger, J.-L.},
       title={La formule de {P}lancherel pour les groupes {$p$}-adiques
  (d'apr\`es {H}arish-{C}handra)},
        date={2003},
        ISSN={1474-7480},
     journal={J. Inst. Math. Jussieu},
      volume={2},
      number={2},
       pages={235\ndash 333},
         url={https://doi.org/10.1017/S1474748003000082},
      review={\MR{1989693}},
}

\bib{MR584084}{article}{
      author={Zelevinsky, A.~V.},
       title={Induced representations of reductive {${\germ p}$}-adic groups.
  {II}. {O}n irreducible representations of {${\rm GL}(n)$}},
        date={1980},
        ISSN={0012-9593},
     journal={Ann. Sci. \'Ecole Norm. Sup. (4)},
      volume={13},
      number={2},
       pages={165\ndash 210},
         url={http://www.numdam.org/item?id=ASENS_1980_4_13_2_165_0},
      review={\MR{584084 (83g:22012)}},
}

\end{biblist}
\end{bibdiv}

\end{document}